\newtheorem{theorem}{Theorem}[section]
\newtheorem{lemma}[theorem]{Lemma}
\newtheorem{corollary}[theorem]{Corollary}
\newtheorem{remark}[theorem]{Remark}
\newtheorem{definition}[theorem]{Definition}
\numberwithin{equation}{section}
\def\p{\partial}
\def\ora{\overrightarrow}	
		\def\ol{\overline}		
\def\m{\mathbb}		\def\t{\tilde}	\def\wt{\widetilde}	
\def\O{\Omega}  \def\lam{\lambda}  \def\eps{\epsilon}  
  \def\v{\varepsilon}  
\def\chifcn{\text{{\bf I}}}
\def\be{\begin{equation}}     \def\ee{\end{equation}}
\def\bes{\begin{equation*}}		\def\ees{\end{equation*}}
\title{Blow-up problems for the heat equation with a local nonlinear Neumann boundary condition}
\author{Xin Yang\thanks{Email: yangxin1@msu.edu},\; Zhengfang Zhou\thanks{Email: zfzhou@math.msu.edu}}
\affil{Department of Mathematics, Michigan State University,\\ East Lansing, Michigan, USA}
\date{}
\begin{document}
\maketitle

\begin{abstract}
This paper estimates the blow-up time for the heat equation $u_t=\Delta u$ with a local nonlinear Neumann boundary condition: The normal derivative $\partial u/\partial n=u^{q}$ on $\Gamma_1$, one piece of the boundary, while on the rest part of the boundary, $\partial u/\partial n=0$. 
The motivation of the study is the partial damage to the insulation on the surface of space shuttles caused by high speed flying subjects. We prove the solution blows up in finite time and estimate both upper and lower bounds of the blow-up time in terms of the area of $\Gamma_1$. In many other work, they need the convexity of the domain $\O$ and only consider the problem with $\Gamma_1=\p\O$. In this paper, we remove the convexity condition and only require $\p\O$ to be $C^{2}$. In addition, we deal with the local nonlinearity, namely $\Gamma_1$ can be just part of $\p\O$. 
\end{abstract}

\bigskip

Keywords: Blow-up time; Heat equation; Local nonlinear; Neumann boundary condition

%\tableofcontents
%\bigskip

\section{Introduction and Notations}  
\label{Sec, introduction}
In this paper, $\Omega$ is assumed to be a bounded open set in $\mathbb{R}^{n}$ ($n\geq 2$) with $\partial\Omega\in C^{2}$, $\Gamma_1$ and $\Gamma_2$ are two disjoint open subsets of $\p\O$ with
$\ol{\Gamma}_1\cup\ol{\Gamma}_2=\p\O$, $\widetilde{\Gamma}\triangleq \ol{\Gamma}_1\cap\ol{\Gamma}_2$ is $C^{1}$ when being regarded as $\p\Gamma_1$ or $\p\Gamma_2$. We study the heat equation with a local nonlinear Neumann boundary condition:
\begin{equation}\label{Prob, Simple Model}
\left\{\begin{array}{lll}
u_{t}(x,t)=\Delta u(x,t) &\text{in}& \Omega\times (0,T],\\
\frac{\partial u}{\partial n}(x,t)=u^{q}(x,t) &\text{on}& \Gamma_1\times (0,T],\\
\frac{\partial u}{\partial n}(x,t)=0 &\text{on}& \Gamma_2\times (0,T],\\
u(x,0)=u_0(x) &\text{in}& \Omega,
\end{array}\right.
\end{equation}
where $q>1$, $u_0\in C^{1}(\ol{\O})$, $u_0(x)\geq 0$ and $u_0(x)\not\equiv 0$. The normal derivative on the boundary is defined as following: for any $x\in\p\O, 0<t\leq T$,
\be\label{Normal Deri. Def.}
\frac{\p u}{\p n}(x,t)\triangleq  
\lim_{h\rightarrow 0^{+}} Du(x_h,t)\cdot\ora{n}(x)\,\,\,\text{as long as this limit exists,}\ee
where $\ora{n}(x)$ denotes the exterior unit normal vector at $x$ and $x_h\triangleq x-h\ora{n}(x)$ for $x\in\p\O$. $\p\O$ being $C^2$ ensures that $x_h$ belongs to $\O$ when $h$ is positive and sufficiently small. 

Our work is partially motivated by the Space Shuttle Columbia disaster in 2003. When the space shuttle was launched, a piece of foam broke off from its external tank and struck the left wing causing the insulation there damaged. As a result, the shuttle disintegrated during its reentry to the atmosphere due to the enormous heat generated near the damaged part. Based on this, we start to establish the math model. 

\begin{figure}[!ht]
\centering
\begin{tikzpicture}[scale=0.8]
\begin{large}
\draw (-5,1/2)-- (0,1/2);
\draw (0,1/2)--(5/2,7/2);
\draw [domain=5/2:3] plot ({\x},{7/2+1/16-(\x-11/4)^2});
\draw (3,7/2)--(3,1/2);
\draw (3,1/2)--(4,1/2);
\draw (-5,-1/2)-- (0,-1/2);
\draw (0,-1/2)--(5/2,-7/2);
\draw [domain=5/2:3] plot ({\x},{-7/2-1/16+(\x-11/4)^2});
\draw (3,-7/2)--(3,-1/2);
\draw (3,-1/2)--(4,-1/2);
\draw [domain=90:270] plot ({cos(\x)-5},{1/2*sin(\x)});
\draw [dashed] [domain=0:360] plot({-5+1/8*cos(\x)},{1/2*sin(\x)});
\draw [dashed] [domain=0:360] plot({4+1/8*cos(\x)},{1/2*sin(\x)});
\path (5/4-1/8,-7/8) coordinate (A);
\draw (A) node [above] {$u$};
\draw [color=green] [domain=110:150] plot({5/4+3/4*cos(\x)},{-2+3/4*sin(\x)});
\draw [color=green] [domain=290:330] plot({5/4+3/4*cos(\x)},{-2+3/4*sin(\x)});
\path (5/4,-2+1/4) coordinate (B);
\draw (B) node [right] {$\Gamma_1$};
\path (5/4+1/4,2-1/4) coordinate (D);
\draw (D) node [below] {$\Omega$};
\path (-2,1/2) coordinate (F);
\draw (F) node [above] {$\Gamma_2$};
\path (1/2,-5/2) coordinate (E);
\draw (E) node {$\tilde{u}$};
\end{large}
\end{tikzpicture}
\caption{Model}
\label{Fig, model}
\end{figure}

In Figure \ref{Fig, model}, $\tilde{u}$ represents the outside temperature of the space shuttle and $u$ denotes the inside temperature. When the space shuttle reentered the atmosphere, it compressed the air at a very high speed. During this process, it caused many chemical reactions which produced enormous radiative heat flux. This was the main source of the heat transfer through the broken part on the left wing. In Physics, the radiation heat flux is proportional to the fourth power of the difference between the temperatures. In addition, to simplify the model, we assume $\tilde{u}=F(u)$ is an increasing function of $u$ and treat it as a polynomial, say $u^{m}$ for some $m>1$. Thus on the broken part $\Gamma_1$, we have
$$\frac{\partial u}{\partial n}\sim (\tilde{u}-u)^{4}=(F(u)-u)^{4}\sim(u^{m}-u)^{4}\sim u^{q},$$
for $q=4m>1$. On $\Gamma_2$, one has $\frac{\partial u}{\partial n}=0$, since the insulation there are intact. Inside the space shuttle, we assume it satisfies the heat equation. Thus, the realistic problem is modeled as (\ref{Prob, Simple Model}).

The next thing is to make sense of the solution such that it exists and unique. For any $T>0$, we define \[\mathcal{A}_{T} = C^{2,1}(\O\times(0,T])\cap C(\overline{\O}\times[0,T])\]
and $$\mathcal{B}_T=\{g:(\Gamma_1\cup\Gamma_2)\times(0,T]\rightarrow\m{R}\,\big|\,g\in UC\big(\Gamma_1\times(0,T]\big) \text{\;and\;} g\in UC\big(\Gamma_2\times(0,T]\big)\},$$ where $UC$ refers to uniformly continuous function spaces. From the definition of $\mathcal{B}_{T}$, we know for any $ g\in\mathcal{B}_T$, when it is restricted to $\Gamma_i\times(0,T]$ ($i=1$ or $2$), it has a unique continuous extension to $\overline{\Gamma}_i\times[0,T]$ and we use $g|_{\Gamma_{i}\times(0,T]}\in C(\overline{\Gamma}_i\times[0,T])$ to denote this extension. Moreover, when there is no ambiguity, we just write $g_i\triangleq g|_{\Gamma_{i}\times(0,T]}$ for convenience. But one should notice that $g$ may not extend to a continuous function on $\p\O\times(0,T]$, since it can have a jump between $\Gamma_1$ and $\Gamma_2$. Finally, we endow $\mathcal{B}_T$ with $L^{\infty}\big((\Gamma_1\cup\Gamma_2)\times(0,T]\big)$ norm such that it becomes a Banach space.

The solution to (\ref{Prob, Simple Model}) is understood in the following way. 
\begin{definition}\label{Def, soln to simple model}
For any $T>0$, a solution to (\ref{Prob, Simple Model}) on $\ol{\O}\times[0,T]$ means a function $u$ in $\mathcal{A}_{T}$ that satisfies (\ref{Prob, Simple Model}) pointwise and moreover, for any $(x,t)\in\wt{\Gamma}\times(0,T]$, $\frac{\p u}{\p n}(x,t)$ exists and
\be\label{interface bdry deri. for simple model}
\frac{\p u}{\p n}(x,t)=\frac{1}{2}\,u^{q}(x,t). \ee
\end{definition}

Here (\ref{interface bdry deri. for simple model}) is a technical requirement which ensures the uniqueness of the solution, see the proofs in Lemma \ref{Lemma, weak comparison for linear prob.}, Corollary \ref{Cor, comparison and uniqueness for linear prob. of simple model} and Theorem \ref{Thm, nonlinear comparison and uniqueness}. We will see in Section \ref{Sec, upper bound} that the solution to (\ref{Prob, Simple Model}) always blows up in finite time, so we would like to study the maximal time the solution can exist.

\begin{definition}\label{Def, maximal sol'n, Target Prob}
We call \[T^{*}\triangleq \sup\{T\geq 0:\,\text{there exsits a solution to (\ref{Prob, Simple Model}) on}\,\,\, \ol{\O}\times[0,T]\}\]
to be the maximal existence time for (\ref{Prob, Simple Model}). Moreover, a function $u^{*}\in C^{2,1}\big(\O\times(0,T^{*})\big)\cap C\big(\ol{\O}\times[0,T^{*})\big)$ is called a maximal solution if it solves (\ref{Prob, Simple Model}) on $\ol{\O}\times[0,T]$ for any $T\in(0,T^{*})$.
\end{definition}
 
In this paper, we denote $M_0=\max\limits_{\ol{\O}}u_0$ and write $|\Gamma_1|$ to represent the area of $\Gamma_1$, that is $$|\Gamma_1|=\int_{\Gamma_1}\,dS.$$ 
$\Phi:\m{R}^{n}\times(0,\infty)\rightarrow\m{R}$ represents the fundamental solution of the heat equation, that is 
\be\label{Fund. Soln.}
\Phi(x,t)\triangleq \frac{1}{(4\pi t)^{n/2}}\,e^{-\frac{|x|^2}{4t}}, \quad\forall\, (x,t)\in \m{R}^{n}\times(0,\infty). \ee
We will show the local existence and uniqueness of the solution to (\ref{Prob, Simple Model}). Moreover, both upper and lower bounds for the maximal existence time $T^{*}$ will be given. The main results of this paper are as following.

\begin{theorem}\label{Thm, fundamental theorem}
The maximal existence time $T^{*}$ for (\ref{Prob, Simple Model}) is positive and there exists a unique maximal solution $u^{*}\in C^{2,1}\big(\O\times(0,T^{*})\big)\cap C\big(\ol{\O}\times[0,T^{*})\big)$ to (\ref{Prob, Simple Model}). Moreover, $u^{*}(x,t)>0$ for any $(x,t)\in\ol{\O}\times(0,T^{*})$. 
\end{theorem}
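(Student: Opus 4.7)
For local existence on a small interval $[0,T_0]$, my approach is to convert (\ref{Prob, Simple Model}) into an integral equation via the Neumann Green's function $G_N$ for $\O$ (or equivalently via single-layer potentials built from $\Phi$). Any classical solution must satisfy
\begin{equation*}
u(x,t) = \int_{\O}G_N(x,y,t)\,u_0(y)\,dy + \int_0^t\int_{\Gamma_1}G_N(x,y,t-s)\,u^q(y,s)\,dS_y\,ds,
\end{equation*}
so the boundary trace of $u$ satisfies a nonlinear integral equation on $\mathcal{B}_{T_0}$. Defining the map $\mathcal{T}$ that sends $g\in\mathcal{B}_{T_0}$ to the boundary trace of the right-hand side (with $u^q$ replaced by $g^q$ on $\Gamma_1$), I would use the $C^2$ regularity of $\p\O$ together with standard kernel bounds of order $(t-s)^{-n/2}e^{-c|x-y|^2/(t-s)}$ to show that $\mathcal{T}$ sends a closed ball of radius $2M_0$ around the linear ($u^q\equiv 0$) solution into itself and is a strict contraction in the $L^\infty$-norm once $T_0$ is small. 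Banach's fixed point theorem then yields a unique trace; inserting it back into the representation formula produces an interior function $u\in\mathcal{A}_{T_0}$. The classical jump relations for single-layer potentials confirm the pointwise Neumann identities on $\Gamma_1$ and $\Gamma_2$ separately, and at interface points $x_0\in\wt{\Gamma}$ the Neumann source is supported only on one side, so the jump is halved and the prescribed identity (\ref{interface bdry deri. for simple model}) emerges automatically.

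Uniqueness on $[0,T_0]$, and thus on every subinterval of $[0,T^*)$, follows by applying Theorem \ref{Thm, nonlinear comparison and uniqueness} in both directions to two candidate solutions. Therefore $T^{*}>0$, and by uniqueness the locally constructed solutions on different intervals $[0,T]$ patch together consistently, defining a single function $u^{*}\in C^{2,1}\bigl(\O\times(0,T^*)\bigr)\cap C\bigl(\ol{\O}\times[0,T^{*})\bigr)$ that solves (\ref{Prob, Simple Model}) on $\ol{\O}\times[0,T]$ for every $T\in(0,T^{*})$.

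For strict positivity, the comparison principle against the zero subsolution (via the same Theorem \ref{Thm, nonlinear comparison and uniqueness}) yields $u^{*}\geq 0$ on $\ol{\O}\times[0,T^{*})$. Suppose $u^{*}(x_0,t_0)=0$ for some $t_0\in(0,T^*)$. If $x_0\in\O$, the strong minimum principle for $\p_t-\Delta$ forces $u^{*}\equiv 0$ on $\ol{\O}\times[0,t_0]$, contradicting $u_0\not\equiv 0$. If $x_0\in\p\O$, Hopf's lemma demands $\p u^{*}/\p n(x_0,t_0)<0$; but the boundary condition gives $\p u^{*}/\p n(x_0,t_0)=u^{*q}(x_0,t_0)=0$ on $\Gamma_1$, equals $0$ on $\Gamma_2$, or equals $\frac{1}{2}u^{*q}(x_0,t_0)=0$ on $\wt{\Gamma}$ by (\ref{interface bdry deri. for simple model}), in every case a contradiction. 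Hence $u^{*}>0$ on $\ol{\O}\times(0,T^{*})$.

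The main obstacle is the first step, and specifically handling the genuinely discontinuous boundary data across $\wt{\Gamma}$ inside a single fixed-point framework. The space $\mathcal{B}_{T}$ permits jumps across $\wt{\Gamma}$, so one must verify that the nonlinear boundary layer operator maps $\mathcal{B}_{T}$ into itself with uniform continuity preserved on each piece $\Gamma_i$, and one must rigorously justify the half-jump at $\wt{\Gamma}$ in order to derive (\ref{interface bdry deri. for simple model}) rather than merely postulate it. Once these potential-theoretic details are resolved, the uniqueness, patching, and positivity steps rely only on classical parabolic machinery together with the comparison results already referenced in the excerpt.
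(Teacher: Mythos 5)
Your outline is essentially sound and reaches the same three conclusions ($T^*>0$, unique maximal solution, strict positivity) by the same overall architecture the paper uses: reduce to an integral/potential representation, run a fixed-point argument for local existence, get uniqueness from the comparison theorem (Theorem \ref{Thm, nonlinear comparison and uniqueness}), patch by uniqueness, and prove positivity by the strong maximum principle plus Hopf's lemma exactly as in Theorem \ref{Thm, positivity of the solution}. The differences are worth noting. First, the paper does not work with the Neumann Green's function $G_N$ (whose construction on a general $C^2$ domain is itself tantamount to solving the linear problem); instead it represents the solution by a volume potential plus a single-layer potential with unknown density $\varphi\in\mathcal{B}_T$, determined by a Volterra integral equation solved via iterated kernels (Theorem \ref{Thm, exist. for linear simple model}, Lemma \ref{Lemma, property of iterative kernels}). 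Second, for the nonlinear step the paper uses the Schauder fixed point theorem on the solution map $\Psi_T:X_{T,R}\to X_{T,R}$, with compactness supplied by the equicontinuity Lemma \ref{Lemma, equicontinuity}, and then recovers uniqueness separately from comparison; your Banach contraction on the boundary trace is a legitimate alternative (the local Lipschitz bound $|g_1^q-g_2^q|\le q R^{q-1}|g_1-g_2|$ on a ball plus the $O(\sqrt{T_0})$ kernel estimate does give a contraction), and it has the advantage of yielding uniqueness of the fixed point directly, though full uniqueness among all classical solutions still requires the comparison theorem as you correctly invoke.

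The one place where your argument is not yet a proof is the one you flag yourself: the verification that the constructed potential has the prescribed normal derivative on $\Gamma_1$, on $\Gamma_2$, and the \emph{halved} value \eqref{interface bdry deri. for simple model} at points of $\wt{\Gamma}$. The classical jump relation (Theorem \ref{Thm, Jump Relation, Original}) assumes a density continuous on all of $\p\O$, whereas here the density jumps across $\wt{\Gamma}$; the statement that ``the jump is halved because the source is supported on one side'' is exactly Theorem \ref{Thm, Jump Relation, Adapted}, whose proof occupies most of Appendix \ref{Sec, Jump Relation} and uses both $\p\O\in C^2$ and $\wt{\Gamma}\in C^1$ (the latter to show the half-ball bisection $\lim_{\eta\to 0}|\wt{B}_{\eta,1}|/|\wt{B}_\eta|=1/2$). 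Without that lemma, the interface condition is postulated rather than derived, and the uniqueness argument via Lemma \ref{Lemma, weak comparison for linear prob.} (which uses \eqref{interface bdry deri. for simple model} to rule out a minimum on $\wt{\Gamma}$) would not close. So your plan is correct, but this potential-theoretic ingredient must be supplied, not merely cited as an obstacle.
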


\begin{theorem}\label{Thm, upper bound of blow-up time}
Suppose $T^{*}$ is the maximal existence time for (\ref{Prob, Simple Model}), then $T^{*}<\infty$ and 
\[\sup_{(x,t)\in\ol{\O}\times[0,T^{*})}|u^{*}(x,t)|=\infty.\] In addition, if
$\min\limits_{x\in\ol{\O}}u_{0}(x)>0$, then 
\be\label{upper bound}
T^{*}\leq \frac{1}{(q-1)|\Gamma_1|}\int_{\O}u_0^{1-q}(x)\,dx. \ee
\end{theorem}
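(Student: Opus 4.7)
The strategy is to track the integral $G(t)\triangleq\int_{\O}(u^{*}(x,t))^{1-q}\,dx$ and show that it decays at least linearly in $t$. Combined with the positivity of $u^{*}$ guaranteed by Theorem \ref{Thm, fundamental theorem}, this linear decay will simultaneously force $T^{*}$ to be finite and pin down the explicit bound (\ref{upper bound}).

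The central computation is as follows. Whenever $u^{*}$ is bounded below by a positive constant on $\ol{\O}$, I differentiate $G$ under the integral sign, substitute $u_{t}=\Delta u$, and apply Green's identity to get
\begin{equation*}
G'(t)=(1-q)\int_{\O}(u^{*})^{-q}\Delta u^{*}\,dx=(1-q)\int_{\p\O}(u^{*})^{-q}\frac{\p u^{*}}{\p n}\,dS+q(1-q)\int_{\O}(u^{*})^{-q-1}|\nabla u^{*}|^{2}\,dx.
\end{equation*}
The $\Gamma_{2}$ part of the boundary integral vanishes, while the $\Gamma_{1}$ part collapses to $|\Gamma_1|$ because $\p u^{*}/\p n=(u^{*})^{q}$ there. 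Since $1-q<0$, the gradient term is non-positive, so
\begin{equation*}
G'(t)\leq-(q-1)|\Gamma_1|,\qquad\text{hence}\qquad G(t)\leq G(0)-(q-1)|\Gamma_1|\,t.
\end{equation*}

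From this one estimate I can read off all three conclusions. First, under the hypothesis $\min_{\ol{\O}}u_{0}>0$, the maximum principle (the boundary flux $u^{q}$ is non-negative on $\Gamma_1$ and zero on $\Gamma_2$) keeps $u^{*}\geq\min u_{0}>0$ on $\ol{\O}\times[0,T^{*})$, so the computation is valid from $t=0$ and the positivity of $G(t)$ immediately yields (\ref{upper bound}). Second, for a general admissible $u_{0}$, Theorem \ref{Thm, fundamental theorem} provides some $t_{0}\in(0,T^{*})$ with $\min_{\ol{\O}}u^{*}(\cdot,t_{0})>0$; rerunning the argument from $t_{0}$ gives $T^{*}-t_{0}\leq G(t_{0})/[(q-1)|\Gamma_1|]<\infty$, so $T^{*}<\infty$. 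Third, the blow-up of the sup norm follows from the maximality of $T^{*}$: were $u^{*}$ uniformly bounded on $\ol{\O}\times[0,T^{*})$, the local existence theory underlying Theorem \ref{Thm, fundamental theorem} would let us take $u^{*}(\cdot,T^{*}-\eta)$ as new initial data for small $\eta>0$ and extend the solution past $T^{*}$, contradicting the definition of $T^{*}$.

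The step I expect to require the most care is Green's identity itself, because the solution class $\mathcal{A}_{T}$ only demands $C^{2,1}$-regularity on $\O\times(0,T]$ with continuity up to the closure, the normal derivative is defined only in the one-sided sense (\ref{Normal Deri. Def.}), and the flux is allowed to jump across $\wt{\Gamma}$ by (\ref{interface bdry deri. for simple model}). I plan to apply the divergence theorem on the interior subdomain $\O_{\delta}\triangleq\{x\in\O:\mathrm{dist}(x,\p\O)>\delta\}$, use uniform continuity of $\nabla u^{*}$ away from $\wt{\Gamma}$ to control the resulting boundary contributions, and pass to the limit $\delta\to 0^{+}$; since $\wt{\Gamma}$ has zero surface measure on $\p\O$, the prescribed jump (\ref{interface bdry deri. for simple model}) contributes nothing in the limit.
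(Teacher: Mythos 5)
Your overall strategy (monitor $\int_{\O}(u^{*})^{1-q}\,dx$, integrate by parts, use the boundary condition to make the flux term collapse to $-(q-1)|\Gamma_1|$) is exactly the paper's, and your handling of the reduction to positive initial data and of the sup-norm blow-up via the continuation property also matches. The gap is in the one step you yourself flagged: the justification of Green's identity for $u^{*}$. Your plan is to work on $\O_{\delta}$ and let $\delta\to 0^{+}$, arguing that $\nabla u^{*}$ is uniformly continuous away from $\wt{\Gamma}$ and that $\wt{\Gamma}$ has zero surface measure. This does not close the argument: the obstruction is not the value of the flux \emph{on} $\wt{\Gamma}$ but the behavior of $Du^{*}$ on the portion of $\p\O_{\delta}$ lying in a shrinking neighborhood of $\wt{\Gamma}$. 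Because the prescribed normal derivative jumps from $(u^{*})^{q}$ to $0$ across $\wt{\Gamma}$, the gradient of $u^{*}$ cannot extend continuously to $\ol{\O}$ there and may well be unbounded near $\wt{\Gamma}$; a null set in the limit does not neutralize an integrand that blows up on its approach. To pass to the limit in $\int_{\p\O_{\delta}}(u^{*})^{-q}Du^{*}\cdot\ora{n}_{\delta}\,dS$ you would need a quantitative (e.g.\ uniformly integrable) bound on $|Du^{*}|$ near $\wt{\Gamma}$, which nothing in your argument supplies.

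The paper's way around this is a \emph{second} approximation, of the solution rather than the domain: it replaces $u^{*}$ by the solutions $v_j$ of (\ref{smoother eq. by cut-off fcn}), whose boundary nonlinearity $\eta_j(x)v_j^{q}$ uses a smooth cut-off $\eta_j$ supported in $\Gamma_{1,j}\subset\subset\Gamma_1$, so there is no jump in the flux and, by the cited regularity result of Friedman, $Dv_j\in C(\ol{\O}\times[\tau_0,T])$. For $v_j$ your computation is fully legitimate (the domain limit $\O_k\to\O$ is handled by Lemma \ref{Lemma, convergence of flux}), and it yields $0\le\int_{\O}u_0^{1-q}\,dx-(q-1)T|\Gamma_{1,j}|$; the comparison $v_j\le u^{*}$ together with $T_j^{*}\ge T^{*}$ and $|\Gamma_{1,j}|\to|\Gamma_1|$ then delivers (\ref{upper bound}). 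Without introducing these smoothed solutions (or otherwise proving an integrable gradient bound near $\wt{\Gamma}$), your proof is incomplete at its central step.
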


\begin{theorem}\label{Thm, lower bound of blow-up time}
Suppose $T^{*}$ is the maximal existence time for (\ref{Prob, Simple Model}), then  
\be\label{lower bound}
T^{*}\geq C^{-\frac{2}{n+2}}\bigg[\ln \Big(|\Gamma_1|^{-1}\Big)-(n+2)(q-1)\ln M_0-\ln(q-1)-\ln C\bigg]^{\frac{2}{n+2}}, \ee
where $C$ is a positive constant which only depends on $n,\O, q$ and remains bounded as $q\rightarrow 1$. As a result, no matter $|\Gamma_1|\rightarrow 0$, $M_0\rightarrow 0$ or $q\rightarrow 1$, we will have $T^{*}\rightarrow\infty$.
\end{theorem}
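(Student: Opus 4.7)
The plan is to derive a nonlinear integral inequality for $M(t)\triangleq\sup_{\ol{\O}\times[0,t]} u^{*}$ and then analyze it to bound $T^{*}$ from below. The first step is to obtain an integral representation for $u^{*}$ via Green's second identity applied to $u^{*}(y,s)$ and the backwards-in-time fundamental solution $\Phi(x-y,t-s)$ on $\O\times(0,t)$. For $x\in\O$ and $t<T^{*}$, this yields
\begin{align*}
u^{*}(x,t) &= \int_{\O}\Phi(x-y,t)\,u_{0}(y)\,dy + \int_{0}^{t}\!\int_{\Gamma_{1}}\Phi(x-y,t-s)\,(u^{*})^{q}(y,s)\,dS_{y}\,ds \\
&\quad - \int_{0}^{t}\!\int_{\p\O} u^{*}(y,s)\,\p_{n_{y}}\Phi(x-y,t-s)\,dS_{y}\,ds.
\end{align*}
The Neumann condition on $\Gamma_{2}$ has killed the $\Gamma_{2}$ contribution to the middle term; the double-layer integral in the last line, however, runs over all of $\p\O$ and is precisely where the absence of a convexity hypothesis forces extra work.

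Using $u^{*}\leq M(t)$, $u_{0}\leq M_{0}$, and $\int_{\O}\Phi(x-y,t)\,dy\leq 1$, the identity becomes
\[ M(t)\leq M_{0} + K_{1}(t)\,M^{q}(t) + K_{2}(t)\,M(t), \]
where $K_{1}(t)=\sup_{x\in\ol{\O}}\int_{0}^{t}\!\int_{\Gamma_{1}}\Phi(x-y,t-s)\,dS_{y}\,ds$ and $K_{2}(t)=\sup_{x\in\ol{\O}}\int_{0}^{t}\!\int_{\p\O}|\p_{n_{y}}\Phi(x-y,t-s)|\,dS_{y}\,ds$. Because $\p_{n_{y}}\Phi(x-y,\tau)=\frac{(x-y)\cdot n_{y}}{2\tau}\Phi(x-y,\tau)$ and the $C^{2}$ regularity of $\p\O$ gives $|(x-y)\cdot n_{y}|\leq c|x-y|^{2}$ locally for $x,y\in\p\O$, the extra $|x-y|^{2}/\tau$ factor is absorbed into the Gaussian, producing $K_{2}(t)\leq C\sqrt{t}$ uniformly in $x$ \emph{without} convexity. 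Thus $K_{2}(t)<1/2$ for small $t$ and the $K_{2}M$ term can be moved to the left-hand side. For $K_{1}$, I would combine the pointwise bound $\int_{\Gamma_{1}}\Phi\,dS_{y}\leq|\Gamma_{1}|(4\pi\tau)^{-n/2}$ with the trace-type estimate $\int_{\p\O}\Phi\,dS_{y}\leq C\tau^{-1/2}$, then split the $\tau$-integration at a threshold chosen so that both contributions track $|\Gamma_{1}|$ and $t$ in a compatible way.

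After absorption, $M(t)\leq 2M_{0}+2K_{1}(t)M^{q}(t)$. The scalar equation $m=2M_{0}+2K_{1}m^{q}$ loses its smaller positive root precisely when $K_{1}(t)$ reaches a critical value $K_{c}\sim(q-1)^{q-1}/(q^{q}M_{0}^{q-1})$; hence $T^{*}\geq t_{c}$, where $t_{c}$ is defined by $K_{1}(t_{c})=K_{c}$. Inverting the estimate for $K_{1}$ and solving for $t_{c}$ should give the claimed bound (\ref{lower bound}). The hardest step is the estimate on $K_{1}$: a naive time-independent bound $K_{1}(t)\leq c|\Gamma_{1}|^{1/(n-1)}$ from a direct min-of-two-bounds argument does not capture the $\ln|\Gamma_{1}|^{-1}$ dependence and the exponent $2/(n+2)$ of (\ref{lower bound}); the precise form should instead arise from a delicate H\"older interpolation on the surface integral, with the parabolic scaling $|y|\sim\sqrt{\tau}$ tied to $\tau\sim t$ responsible for the exponent $(n+2)/2$ in the threshold time.
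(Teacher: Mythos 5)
Your skeleton is the right one --- the paper also starts from the representation formula, bounds the double-layer term using the $C^{2}$ estimate $|(x-y)\cdot\ora{n}(y)|\leq C|x-y|^{2}$ (its Lemma \ref{Lemma, geom. prop. of C^2 bdry}), and isolates the $|\Gamma_1|$ dependence in the single-layer term --- but your treatment of the double-layer term has a genuine gap that blocks the conclusion. You propose to absorb $K_{2}(t)M(t)$ into the left-hand side by requiring $K_{2}(t)<1/2$. Since $K_{2}(t)\leq C\sqrt{t}$ with $C=C(n,\O)$, this restricts you to $t\leq t_{0}$ for a \emph{fixed} constant $t_{0}$ depending only on $n$ and $\O$. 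Your final bound is then $T^{*}\geq\min\{t_{0},t_{c}\}$, which is capped at $t_{0}$ no matter how small $|\Gamma_1|$ is; it cannot yield a lower bound tending to infinity as $|\Gamma_1|\to 0$, which is exactly what (\ref{lower bound}) asserts. The paper avoids this by never absorbing the double-layer term: it applies H\"older in time to write $III\leq C\,t^{n/(2n+4)}\big(\int_{0}^{t}M^{n+2}\big)^{1/(n+2)}$, folds this into an energy $E(t)=M_{0}^{n+2}+|\Gamma_1|\int_{0}^{t}\widetilde{M}^{q(n+2)}+\int_{0}^{t}\widetilde{M}^{n+2}$, and handles the linear-in-$E$ contribution with an integrating factor $\mu(t)=\exp[C(q-1)\int_{0}^{t}(1+\tau^{n/2})d\tau]$ in a Bernoulli-type argument. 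The price of the exponentially growing integrating factor is precisely the logarithmic form $[\ln(|\Gamma_1|^{-1})-\cdots]^{2/(n+2)}$ of the final bound.

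The second issue is the one you flag yourself: you do not have the estimate on $K_{1}$ that produces the correct $|\Gamma_1|$ dependence. The paper's answer is a H\"older interpolation with exponent $m=1+\frac{1}{n+1}$ on the surface integral, combined with Lemma \ref{Lemma, bound for surface integral} ($t^{-(n-1)/2}\int_{\p\O}e^{-|x-y|^{2}/4t}dS_{y}\leq C$), giving $\int_{\Gamma_1}\tau^{-n/2}e^{-|x-y|^{2}/4\tau}dS_{y}\leq C\,\tau^{-(2n+1)/(2n+4)}|\Gamma_1|^{1/(n+2)}$, followed by a second H\"older in time to reach $\big(\int_{0}^{t}M^{q(n+2)}\big)^{1/(n+2)}$. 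Your min-of-two-bounds splitting gives only $|\Gamma_1|^{1/(n-1)}$ with no useful time factor, and your critical-root argument would in any case deliver only the single-layer threshold time; without the Gronwall-type machinery above, the two gaps together mean the proof as proposed does not go through. (A minor point: to evaluate the representation formula at boundary points one needs the jump relation, which introduces the factor $2$ in (\ref{Representation formula for bdry.}); working at interior points and taking suprema, as you implicitly do, is acceptable but should be said.)
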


Many work have been devoted to study the parabolic equation with Neumann boundary conditions which are analogous to (\ref{Prob, Simple Model}) but with $\Gamma_1=\p\O$. More precisely, they study the problem
\begin{equation}\label{Classical prob}
\left\{\begin{array}{lll}
u_{t}(x,t)-\Delta u(x,t)=f(x,t) &\text{in}& \Omega\times (0,T],\\
\frac{\partial u}{\partial n}(x,t)=F\big(x,t,u(x,t)\big) &\text{on}& \p\O\times (0,T],\\
u(x,0)=\psi(x) &\text{in}& \Omega,
\end{array}\right.
\end{equation}
where $f\in C^{\alpha,\alpha/2}\big(\ol{\O}\times[0,T]\big)$, $F\in C\big(\p\O\times[0,\infty)\times(-\infty,\infty)\big)$ and $\psi\in C^{1}(\ol{\O})$. For example, \cite{Amann, ACR-B, Friedman, L-GMW} discussed the existence and uniqueness of the solution to (\ref{Classical prob}) by various methods and in different spaces. \cite{Fila, HY, LP, L-GMW, RR, Walter} studied the finite time blow-up of the solution and the upper bound of the blow-up time. \cite{PPV-P, PS1, PS2} estimated the lower bound of the blow-up time. \cite{FQ, Friedman, HY, L-GMW, RR} covered some other topics such as the localization of the blow-up points, the blow-up rate, the asymptotic behaviour near the blow-up points and so on. \cite{DL, Hu, Levine, QS} are books or surveys which summarized the work and methods about different issues on the problem (\ref{Classical prob}).

However, there have not been many works on the problem (\ref{Prob, Simple Model}) since the normal derivative $\frac{\p u}{\p n}$ in (\ref{Prob, Simple Model}) is not continuous along the boundary. However, some ideas from previous work can be borrowed to apply to (\ref{Prob, Simple Model}). 

For the theories on existence and uniqueness of the solution to (\ref{Prob, Simple Model}), one of the key tools is the Jump Relation of the Single-layer Potentials mentioned in (\cite{Friedman}, Sec. 2, Chap. 5). We discuss several variants of this Jump Relation in Appendix \ref{Sec, Jump Relation} such that they can be adapted to our problem. Then we follow the arguments in \cite{Friedman, L-GMW} to prove in Appendix \ref{Sec, exist. and uniq.} the existence and uniqueness of the solution to the Linear problem (\ref{Linear Prob, Simple Model}) and to the Nonlinear problem (\ref{Nonlinear Prob, Simple Model}) by using the theories developed in Appendix \ref{Sec, Jump Relation}. Both proofs in Appendix \ref{Sec, Jump Relation} and Appendix \ref{Sec, exist. and uniq.} are very tedious and analogous to previous work, so we decide to put them into the Appendices.

To estimate the upper bound of the blow-up time, there have been existing several methods. For our problem (\ref{Prob, Simple Model}), the simplest one to apply seems to be \cite{RR}, in which it introduces a suitable energy function and shows the finite blow-up of this energy function. The process is very succinct and even gives an explicit formula for the upper bound. We follow this idea in \cite{RR} but utilize a sequence of approximated solutions $\{v_j\}_{j\geq 1}$, which satisfy the approximated problem (\ref{smoother eq. by cut-off fcn}), to justify all the calculations.

The lower bound of the blow-up time is usually harder to obtain, a popular method dealing with the lower bound is established in \cite{PS1, PS2}. After that, the similar idea is also applied to some more generalized problems, see e.g. \cite{BS, LL, PPV-P}. This method also introduces a suitable energy function and derives a differential inequality for that energy function, from there the lower bound can be achieved. However, when deriving the differential inequality, their technique requires the convexity of $\O$ and is not applicable to the partial boundary problem, e.g. (\ref{Prob, Simple Model}) with $\Gamma_2\neq\emptyset$. In addition, their arguments only consider $n=2$ or $3$. Thus, in order to handle (\ref{Prob, Simple Model}) without any of these limitations, we seek a different way by directly analyzing the Representation formula (\ref{Representation formula for bdry.}) of $u^{*}$ and taking advantage of the properties of the heat kernel. In this way, we are able to give a lower bound of the blow-up time as in Theorem \ref{Thm, lower bound of blow-up time}. 

The organization of this paper is as following: Section \ref{Sec, upper bound} is devoted to show Theorem \ref{Thm, upper bound of blow-up time}. Then we prove Theorem \ref{Thm, lower bound of blow-up time} in Subsection \ref{Secsub, lower bound} by analyzing the Representation formula (\ref{Representation formula for bdry.}) which is derived in Subsection \ref{Secsub, Weak Soln and Rep. Form.}. Subsection \ref{Secsub, Compare lower bounds} compares the lower bound estimate derived from our method with previous results. Section \ref{Sec, numerical result} presents some numerical simulations. Appendix \ref{Sec, Jump Relation} introduces some generalized Jump Relations of the Single-layer Potentials. Appendix \ref{Sec, exist. and uniq.} establishes the general theories on the existence and uniqueness of the solution and verifies Theorem \ref{Thm, fundamental theorem} as a special case.

\section{Upper Bound of the Blow-up Time}
\label{Sec, upper bound}
First of all, we want to point out that Theorem \ref{Thm, fundamental theorem} has been verified (See Remark \ref{Remark, app. to exist. and uniq.}). Then based on this fundamental result, the goal of this section is to prove the unique solution $u^{*}$ of (\ref{Prob, Simple Model}) always blows up (i.e. $L^{\infty}$ norm of $u^{*}$ goes to $\infty$) in finite time. In addition, the blow-up time $T^{*}$ is estimated in terms of $|\Gamma_1|$ and the initial data $u_0$, as long as $u_0$ is positive on $\ol{\O}$.

A common way to prove the blow-up of a solution is to introduce a suitable energy function related to that solution and then derive a differential inequality to show the energy function blows up. This process usually involves integration by parts and therefore requires $Du$ (i.e. the derivative with respect to the space variable) to be continuous up to the boundary. However, $u^{*}$ is not such smooth, since the normal derivative $\frac{\p u}{\p n}$ is apparently not continuous along $\wt{\Gamma}$. Thus, some approximations are needed to get through this process.

Firstly, we approximate the domain $\O$ from inside by $\{\O_k\}_{k\geq 1}$ which are defined as: for any $k\geq 1$,  
\be\label{def of approxiamted domain}
\O_{k}\triangleq \{x\in\O: \text{dist}(x,\p\O)>1/k\}. \ee
In addition, for any $x\in\p\O_{k}$, we use $\ora{n_k}(x)$ to denote the exterior unit normal vector at $x$ with respect to $\p\O_{k}$ while for any $x\in\p\O$, $\ora{n}(x)$ represents the exterior unit normal vector at $x$ with respect to $\p\O$. 

Secondly, we approximate $u^{*}$ by introducing a sequence of cut-off functions $\{\eta_{j}\}_{j\geq 1}$. More specifically, we choose a sequence of boundary pieces $\{\Gamma_{1,j}\}_{j\geq 1}$ such that $\Gamma_{1,j}\subset\Gamma_1$ and $\Gamma_{1,j}\nearrow\Gamma_1$, (see Figure \ref{Fig, boundary approximation}). Then we define a sequence of $C^{\infty}$ cut-off functions $\{\eta_j\}_{j=1}^{\infty}$ such that for each $j\geq 1$, 
\be\label{cut-off fcns}
\eta_{j}(x)\;\left\{\begin{array}{ll}
=1, & \quad x\in\Gamma_{1,j},\\
\in[0,1], & \quad x\in\ol{\Gamma}_1\setminus\Gamma_{1,j},\\
=0, & \quad x\in\p\O\setminus\Gamma_1.
\end{array}\right.\ee
In addition, we require that $\eta_{j+1}(x)\geq \eta_{j}(x)$, for any $j\geq 1$ and for any $x\in\p\O$.

\begin{figure}[!ht] \centering
\begin{tikzpicture}[scale=0.8]
\begin{large}
\draw [domain=0:180] plot({4*cos(\x)},{4*sin(\x)});
\path (-4,0) coordinate(A);
\path (0,4) coordinate(B);
\draw (A) node [right] {$\Gamma_1$};
\draw (B) node [color=red] [below] {$\Gamma_{1,j}$};
\draw [color=red] [domain=260:310] plot({3.7+0.5*cos(\x)},{1.58+0.5*sin(\x)});
\draw [color=red] [domain=230:280] plot({-3.7+0.5*cos(\x)},{1.58+0.5*sin(\x)});
\end{large}
\end{tikzpicture}
\caption{$\Gamma_{1,j}$}
\label{Fig, boundary approximation} 
\end{figure}
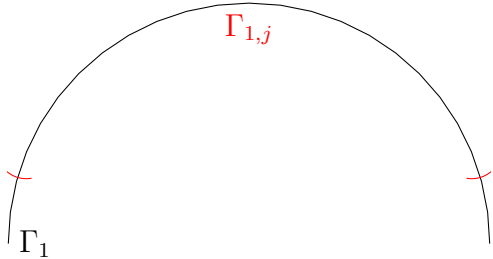 

Now for each $j\geq 1$, one considers the following problem 
\begin{equation}\label{smoother eq. by cut-off fcn}
\left\{\begin{array}{lll}
(v_j)_{t}(x,t)=\Delta v_{j}(x,t) &\text{in}& \Omega\times (0,T],\\
\frac{\partial v_j}{\partial n}(x,t)=\eta_j(x)\,{v_j}^{q}(x,t) &\text{on}& \p\O\times (0,T],\\
v_j(x,0)=u_0(x) &\text{in}& \Omega.
\end{array}\right.
\end{equation}
If we take $f\equiv 0$, $\eta\equiv\eta_j$, $F(\lam)=\lam^{q}$ for $\lam\in\m{R}$, $\psi\equiv u_0$ and $\Gamma_2=\emptyset$ in (\ref{Nonlinear Prob, Simple Model}), then by Appendix \ref{Subsec, Nonlinear Case}, we know (\ref{smoother eq. by cut-off fcn}) has a nonnegative unique maximal solution $v_{j}\in C^{2,1}(\O\times\big(0,T_{j}^{*})\big)\cap C\big(\ol{\O}\times[0,T_{j}^{*})\big)$, where $T_{j}^{*}$ denotes the maximal existence time for (\ref{smoother eq. by cut-off fcn}). In addition, by Corollary \ref{Cor, comparison for cut-off fcn.} and (\ref{cut-off fcns}), $v_j\leq u^{*}$ on $\ol{\O}\times[0,T]$ for any $T<\min\{T^{*},T_j^{*}\}$. Thus, using Theorem \ref{Thm, infinity norm characterizes the blow-up}, one has $T_j^{*}\geq T^{*}$ and $v_j\leq u^{*}$ on $\ol{\O}\times[0,T^{*})$.

\begin{lemma}\label{Lemma, convergence of flux}
Given $0<t_1<t_2<\infty$ and suppose that $\phi:\ol{\O}\times[t_1,t_2]\rightarrow\m{R}^{n}$ is continuous, then 
\[\lim_{k\rightarrow\infty}\int_{t_1}^{t_2}\int_{\p\O_{k}}\phi(x,t)\cdot\ora{n_{k}}(x)\,dS_{x}\,dt=\int_{t_1}^{t_2}\int_{\p\O}\phi(x,t)\cdot\ora{n}(x)\,dS_{x}\,dt.\]
\end{lemma}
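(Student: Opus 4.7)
The plan is to reduce the integral over $\p\O_{k}$ to an integral over $\p\O$ via the normal projection, and then pass to the limit using uniform convergence. Since $\p\O\in C^{2}$ and $\O$ is bounded, there exists a tubular neighborhood width $\delta>0$ such that the map
\[
\Psi:\p\O\times(-\delta,\delta)\rightarrow\m{R}^{n},\qquad \Psi(y,s)\triangleq y-s\,\ora{n}(y),
\]
is a $C^{1}$ diffeomorphism onto its image. Fix $k_0$ with $1/k_0<\delta$. For any $k\geq k_0$, the restriction $y\mapsto\Psi_k(y)\triangleq y-(1/k)\ora{n}(y)$ is then a $C^{1}$ diffeomorphism from $\p\O$ onto $\p\O_{k}$, and each point $x\in\p\O_k$ has unique nearest boundary point $y=y(x)\in\p\O$ lying on the inward normal line through $x$. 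I would first record two geometric facts from this setup: (i) the exterior unit normal to $\p\O_k$ at $x=\Psi_k(y)$ coincides with $\ora{n}(y)$, and (ii) the surface Jacobian $J_k(y)$ of $\Psi_k$ equals $\prod_{i=1}^{n-1}\bigl(1-\kappa_i(y)/k\bigr)$, where $\kappa_i$ are the principal curvatures of $\p\O$ at $y$. Since $\p\O$ is compact $C^2$, the $\kappa_i$ are bounded, so $J_k(y)\to 1$ uniformly on $\p\O$.

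Next, using the change of variables $x=\Psi_k(y)$, I would rewrite
\[
\int_{\p\O_k}\phi(x,t)\cdot\ora{n_k}(x)\,dS_x
=\int_{\p\O}\phi\bigl(\Psi_k(y),t\bigr)\cdot\ora{n}(y)\,J_k(y)\,dS_y,
\]
and then integrate in $t$ over $[t_1,t_2]$. Because $\phi$ is continuous on the compact set $\ol{\O}\times[t_1,t_2]$, it is uniformly continuous there, so
\[
\sup_{(y,t)\in\p\O\times[t_1,t_2]}\Bigl|\phi\bigl(\Psi_k(y),t\bigr)-\phi(y,t)\Bigr|\xrightarrow{k\to\infty}0,
\]
since $|\Psi_k(y)-y|=1/k$. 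Combining this uniform convergence with $J_k\to 1$ uniformly and the uniform bound $\|\phi\|_{\infty}<\infty$, the integrand on the right converges uniformly to $\phi(y,t)\cdot\ora{n}(y)$ on $\p\O\times[t_1,t_2]$. Integrating over this set of finite measure yields the desired limit.

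The principal technical hurdle is the clean justification of the two geometric facts in step one, especially the identification $\ora{n_k}(\Psi_k(y))=\ora{n}(y)$ and the explicit form of $J_k$. Both follow from the fact that $\p\O_k$ is precisely the level set $\{d=1/k\}$ of the signed distance function $d(x)=\text{dist}(x,\p\O)$, which is $C^{2}$ in the tubular neighborhood; the unit normal to a level set is $\nabla d$, and $\nabla d(\Psi_k(y))=-\ora{n}(y)$ points into $\O_k^c$ when $\Psi_k(y)\in\p\O_k$ is approached from $\O_k$, giving exterior normal $\ora{n}(y)$. The Jacobian formula then comes from differentiating $\Psi_k$ in a local orthonormal frame of principal directions on $\p\O$. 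Once these are established, the remainder of the argument is a routine application of uniform convergence.
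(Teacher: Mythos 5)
Your proposal is correct and follows essentially the same route as the paper: both reduce the integral over $\p\O_k$ to one over $\p\O$ via the normal projection $\Psi_k(y)=y-\frac{1}{k}\ora{n}(y)$ and then pass to the limit. You simply make explicit the two geometric facts (identification of $\ora{n_k}\circ\Psi_k$ with $\ora{n}$ and the Jacobian $\det(I-\frac{1}{k}S)\to 1$) that the paper leaves implicit, and you conclude by uniform convergence where the paper invokes dominated convergence; these are presentational rather than substantive differences.
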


\begin{proof}
Since $\p\O\in C^{2}$, we know it has the interior ball property. Therefore when $k$ is large enough, the mapping $\Psi_k: \p\O\rightarrow\p\O_{k}$, which is defined as 
\[\Psi_k(\xi)=\xi-\frac{1}{k}\,\ora{n}(\xi),\quad\forall\, \xi\in\p\O,\]
is a bijection. Moreover, one can see that $\Psi_k$ is $C^1$ and $\ora{n_k}\circ\Psi_k$ is continuous on $\p\O$. As a result,
\begin{align*}
& \int_{\p\O_{k}}\phi(x,t)\cdot\ora{n_k}(x)\,dS_x \\
\xlongequal[x=\Psi_{k}(\xi)]{\xi=\Psi_{k}^{-1}(x)}& \int_{\p\O}\phi\big(\Psi_{k}(\xi),t\big)\cdot\ora{n_k}\big(\Psi_{k}(\xi)\big)\big|J\Psi_{k}(\xi)\big|\,dS_{\xi}.
\end{align*}
Then by Lebesgue's dominated convergence theorem, 
\[\lim_{k\rightarrow\infty}\int_{\p\O_{k}}\phi(x,t)\cdot\ora{n_k}(x)\,dS_x=\int_{\p\O}\phi(\xi,t)\cdot\ora{n}(\xi)\,dS_{\xi}.\]
In addition, we know that $\int_{\p\O_{k}}\phi(x,t)\cdot\ora{n_k}(x)\,dS_x$ is uniformly bounded in $k$ and $t$, since $\phi$ is bounded. It then again follows from 
Lebesgue's dominated convergence theorem that
\[\lim_{k\rightarrow\infty}\int_{t_1}^{t_2}\int_{\p\O_{k}}\phi(x,t)\cdot\ora{n_{k}}(x)\,dS_{x}\,dt=\int_{t_1}^{t_2}\int_{\p\O}\phi(\xi,t)\cdot\ora{n}(\xi)\,dS_{\xi}\,dt.\]

\end{proof}

Now we start to prove Theorem \ref{Thm, upper bound of blow-up time}.

\begin{proof} 
Firstly by Theorem \ref{Thm, fundamental theorem}, $u^{*}(x,t)>0$ for any $x\in\ol{\O}$ and $t>0$, so to judge whether the solution blows up or not, we can assume \[\inf_{x\in\O}u_0(x)\geq \v_{0}\] for some positive constant $\v_{0}$. Otherwise, we start from any positive time.

Secondly, for any $T\in(0,T^{*})$, we fix it temporarily and denote \[M_T\triangleq ||u^{*}||_{L^{\infty}(\ol{\O}\times[0,T])}.\]
For any $j\geq 1$, recalling that $v_j$ is the solution to (\ref{smoother eq. by cut-off fcn}), then by the maximum principle and the fact that $v_j\leq u^{*}$, we have
\be\label{bound for approximated sol'n}
\v_0\leq v_j(x,t)\leq M_T, \quad \forall\, (x,t)\in \overline{\O}\times[0,T].
\ee
Moreover, it follows from (\cite{Friedman}, the last Corollary, Sec. 4, Chap. 5) that for any $\tau_0>0$ and $1\leq i\leq n$, 
\begin{equation}\label{cont. deri.}
(v_j)_{x_i}\in C(\overline{\O}\times[\tau_0,T]).
\end{equation}

Borrowing an idea from \cite{RR}, for any $j\geq 1$ and $k\geq 1$, we define $h_{j,k}:[0,T]\rightarrow\m{R}$ and 
$h_j:[0,T]\rightarrow\m{R}$ by
\[h_{j,k}(t)=\int_{\O_{k}}v_{j}^{1-q}(x,t)\,dx\] and
\[h_{j}(t)=\int_{\O}v_{j}^{1-q}(x,t)\,dx.\]
Since $v_j\in C^{2,1}\big(\O\times[0,T^{*})\big)$, the following calculations are justified.

\begin{eqnarray*}
h_{j,k}'(t) &=& (1-q)\int_{\O_{k}}v_{j}^{-q}\,(v_j)_{t}\,dx\\
&=& (1-q)\int_{\O_{k}}v_{j}^{-q}\,\Delta v_j\,dx\\
&=& (1-q)\int_{\O_{k}}\,\nabla\cdot(v_j^{-q}Dv_j)+q\,v_{j}^{-q-1}|Dv_j|^2\,dx\\
&\leq & (1-q)\int_{\O_{k}}\nabla\cdot(v_j^{-q}Dv_j)\,dx\\
&=& (1-q)\int_{\p \O_{k}}v_j^{-q}\,Dv_j\cdot \ora{n_{k}}\,dS.
\end{eqnarray*}
Integrating $t$ from $\tau_0$ to $T$,
\[h_{j,k}(T)-h_{j,k}(\tau_0)\leq (1-q)\int_{\tau_0}^{T}\int_{\p\O_{k}}v_j^{-q}(x,t)\,D v_j(x,t)\cdot\ora{n_{k}}(x)\,dS_{x}\,dt.\]
Taking $k\rightarrow\infty$, by (\ref{cont. deri.}) and Lemma \ref{Lemma, convergence of flux} with $\phi=v_j^{-q}\,Dv_j$, we attain
\begin{align*}
h_j(T)-h_j(\tau_0) &\leq (1-q)\int_{\tau_0}^{T}\int_{\p\O}v_j^{-q}(x,t)\, D v_j(x,t)\cdot\ora{n}(x)\,dS_{x}\,dt \\
&= (1-q)\int_{\tau_0}^{T}\int_{\p\O}\eta_{j}(x)\,dS_{x}\,dt\\
&\leq (1-q)\int_{\tau_0}^{T}|\Gamma_{1,j}|\,dt\\
&= (1-q)(T-\tau_0)|\Gamma_{1,j}|.
\end{align*}
Sending $\tau_0\rightarrow 0$ and noticing (\ref{bound for approximated sol'n}), we obtain $h_j(T)-h_j(0)\leq (1-q)\,T\,|\Gamma_{1,j}|$, i.e.
\[\int_{\O}v_j^{1-q}(x,T)\,dx\leq \int_{\O}u_0^{1-q}(x)\,dx+(1-q)\,T\,|\Gamma_{1,j}|.\]
Consequently,
\[0\leq \int_{\O}u_0^{1-q}(x)\,dx+(1-q)\,T\,|\Gamma_{1,j}|.\]
Then due to the fact that $|\Gamma_{1,j}|\rightarrow |\Gamma_1|$,
\[T\leq \frac{1}{(q-1)|\Gamma_1|}\int_{\O}u_0^{1-q}(x)\,dx.\]
Finally, since $T$ is arbitrary in $(0,T^{*})$, then 
\[T^{*}\leq \frac{1}{(q-1)|\Gamma_1|}\int_{\O}u_0^{1-q}(x)\,dx.\]
Thus, we have shown the solution must blow up in finite time and derived an upper bound for $T^{*}$. Now it follows from Theorem \ref{Thm, infinity norm characterizes the blow-up} that 
\[\sup_{(x,t)\in\ol{\O}\times[0,T^{*})}|u^{*}(x,t)|=\infty.\]
Consequently, Theorem \ref{Thm, upper bound of blow-up time} is verified.
\end{proof}

\section{Lower Bound of the Blow-up Time}
\label{Sec, lower bound}

\subsection{Derivation of  the lower bound}
\label{Secsub, lower bound}
In this subsection, we will derive a lower bound for $T^{*}$ by analyzing the Representation formulas (\ref{Representation formula for bdry.}). Here we want to point out that (\ref{Representation formula for bdry.}) is a formula for $u^{*}$ on $\p\O\times[0,T^{*})$ which is derived from (\ref{Representation formula for inside}), the formula for $u^{*}$ on $\O\times[0,T^{*})$. To estimate $T^{*}$, due to the maximum principle, it suffices to study the boundary values, thus we just analyze (\ref{Representation formula for bdry.}). 

The proof of (\ref{Representation formula for bdry.}) requires some work, but the idea is not hard. Firstly, if the solution $u^{*}$ is smooth up to the boundary, then using integration by parts, one can easily see that $u^{*}$ is also a weak solution as in Definition \ref{Def, weak soln}. From there, (\ref{Representation formula for bdry.}) can be obtained by some standard steps given in Theorem \ref{Thm, weak sol'n implies rep. formula} and its Corollary \ref{Cor, rep. formula for bdry.}. However, the solution $u^{*}$ is not $C^{1}$ up to the boundary, so again we need to take an approximation procedure, which is similar to that in Section \ref{Sec, upper bound}. This process is tedious, so we postpone it to Subsection \ref{Secsub, Weak Soln and Rep. Form.}.  

Before the proof of Theorem \ref{Thm, lower bound of blow-up time}, let's state the following Lemma \ref{Lemma, bound for surface integral} which will be used for several times in this subsection.

\begin{lemma}\label{Lemma, bound for surface integral}
Suppose $\O$ is an open, bounded set in $\m{R}^{n}$ with $\p\O\in C^{2}$, then there exists a constant $C=C(n,\O)$ such that for any $x\in\p\O$ and $t>0$,
\[\frac{1}{t^{(n-1)/2}}\int_{\p\O}e^{-\frac{|x-y|^2}{4t}}\,dS_{y}\leq C.\]
\end{lemma}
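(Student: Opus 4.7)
The plan is to exploit the $C^2$-smoothness and compactness of $\p\O$ to reduce the surface integral to a Gaussian integral on $\m{R}^{n-1}$ near $x$, together with a trivial bound away from $x$. Fix $x\in\p\O$ and split
\[
\int_{\p\O} e^{-|x-y|^2/(4t)}\,dS_y = I_1(x,t) + I_2(x,t),
\]
where $I_1$ is the integral over $\p\O\cap B_\delta(x)$ and $I_2$ is the integral over the complement, with $\delta>0$ a fixed radius (depending only on $\O$) to be chosen.

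For $I_1$, the idea is to pick $\delta$ so that $\p\O\cap B_\delta(x)$ is a $C^2$ graph over the tangent hyperplane to $\p\O$ at $x$, namely $y=x+z+\phi_x(z)\,\ora{n}(x)$ with $z\perp\ora{n}(x)$, $|z|<\delta$, $\phi_x(0)=0$, $\nabla\phi_x(0)=0$, and the surface-area Jacobian $J_x(z)=\sqrt{1+|\nabla\phi_x(z)|^2}$ uniformly bounded by some constant $J=J(\O)$. Because $z\perp\ora{n}(x)$ we have $|y-x|^2=|z|^2+\phi_x(z)^2\geq|z|^2$, hence
\[
I_1 \;\leq\; J\int_{\m{R}^{n-1}} e^{-|z|^2/(4t)}\,dz \;=\; J(4\pi t)^{(n-1)/2}.
\]

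For $I_2$, on the complement we have $|x-y|\geq\delta$, so $I_2 \leq |\p\O|\,e^{-\delta^2/(4t)}$. Dividing by $t^{(n-1)/2}$, the contribution from $I_1$ is the constant $J(4\pi)^{(n-1)/2}$, while the contribution from $I_2$ is $|\p\O|\,e^{-\delta^2/(4t)}\,t^{-(n-1)/2}$, a continuous function of $t\in(0,\infty)$ that tends to $0$ at both $t\to 0^{+}$ (by the exponential) and $t\to\infty$ (by the power), hence is uniformly bounded on $(0,\infty)$. Adding the two estimates yields the desired constant $C=C(n,\O)$.

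The main technical point is the uniformity in $x$ of the local graph data $(\delta, J, \|\phi_x\|_{C^2})$. This is where the $C^2$ hypothesis and compactness of $\p\O$ are used: by uniform continuity of $\ora{n}$ over $\p\O$ and a finite cover by normal-coordinate charts, the parameters $\delta$, $J$ and the quadratic control of $\phi_x$ near $0$ can all be chosen independent of the base point $x$. This step is standard differential geometry but is the only place where we use more than continuity of $\p\O$, and it is where I would spend the most care in writing up the proof.
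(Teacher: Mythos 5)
Your proof is correct and is exactly the argument the paper has in mind: the paper omits the proof entirely, saying only that the conclusion follows ``by taking advantage of the definition of a $C^{2}$ boundary,'' which is precisely your local-graph decomposition near $x$ plus the trivial bound away from $x$. The key estimates --- $|x-y|^2\geq|z|^2$ on the graph, the Gaussian integral $\int_{\m{R}^{n-1}}e^{-|z|^2/(4t)}\,dz=(4\pi t)^{(n-1)/2}$, the uniform boundedness of $t^{-(n-1)/2}e^{-\delta^2/(4t)}$ on $(0,\infty)$, and the compactness argument giving chart data $(\delta,J)$ independent of $x$ --- are all sound, so your write-up simply fills in the details the authors chose to omit.
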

\begin{proof}
It is readily to show this conclusion by taking advantage of the definition of a $C^{2}$ boundary, we omit the proof here.
\end{proof}

Now we start to prove Theorem \ref{Thm, lower bound of blow-up time}. 
\begin{proof}
In the following, $C$ will be used to denote a positive constant which only depends on $n$, $\O$, $q$ and is bounded when $q\rightarrow 1$. Moreover, $C$ may be different from line to line. We prove by analyzing the following Representation formula (\ref{Representation formula for bdry.}) for $u^{*}$ on the boundary points $(x,t)\in\p\O\times[0,T^{*})$:
\begin{align}\label{bdry. point estimate decomp.}
u^{*}(x,t) =& 2\int_{\O}\Phi(x-y,t)\,u_{0}(y)\,dy+ 2\int_{0}^{t}\int_{\Gamma_1}\Phi(x-y, t-\tau)\,(u^{*})^{q}(y,\tau)\,dS_{y}\,d\tau \nonumber \\
& +2\int_{0}^{t}\int_{\p\O}(D\Phi)(x-y,t-\tau)\cdot\ora{n}(y)\,u^{*}(y,\tau)\,dS_{y}\,d\tau \nonumber \\
\triangleq &  I+II+III.
\end{align}

Define $M,\, \widetilde{M}:[0,T^{*})\rightarrow\m{R}$ by \[M(t)=\max_{y\in\p\O}u^{*}(y,t)\] and
\[\widetilde{M}(t)=\max\limits_{\tau\in[0,t]}M(\tau).\]
It is clear that $\widetilde{M}$ blows up at the same time $T^{*}$ as $u^{*}$. It is also easy to see  
\begin{equation}\label{I ineq. by max. initial}
I\leq 2M_0,
\end{equation}
and
\be\label{II ineq. by double integral}\begin{split}
II &\leq C\int_{0}^{t}M^{q}(\tau)\int_{\Gamma_1}(t-\tau)^{-n/2}\,e^{-\frac{|x-y |^{2}}{4(t-\tau)}}\,dS_{y}\,d\tau  \\
&= C\int_{0}^{t}M^{q}(t-\tau)\int_{\Gamma_1}\tau^{-n/2}\,e^{-\frac{|x-y |^{2}}{4\tau}}\,dS_{y}\,d\tau.
\end{split}\ee
In addition, we have that
\begin{align}\label{III ineq. by double integral}
III &\leq C\int_{0}^{t}M(\tau)\int_{\p\O}\frac{|x-y|^2}{(t-\tau)^{n/2+1}}\,e^{-\frac{|x-y|^2}{4(t-\tau)}}\,dS_{y}\,d\tau \nonumber \\
&= C\int_{0}^{t}M(t-\tau)\int_{\p\O}\frac{|x-y|^2}{\tau^{n/2+1}}\,e^{-\frac{|x-y|^2}{4\tau}}\,dS_{y}\,d\tau \nonumber \\
&\leq C\int_{0}^{t}M(t-\tau)\int_{\p\O}\frac{1}{\tau^{n/2}}\,e^{-\frac{|x-y|^2}{8\tau}}\,dS_{y}\,d\tau \nonumber \\
&\leq C\int_{0}^{t}M(t-\tau)\,\tau^{-1/2}\,d\tau,
\end{align}
where the first inequality  is due to Lemma \ref{Lemma, geom. prop. of C^2 bdry}, the last inequality is due to Lemma \ref{Lemma, bound for surface integral} and the second inequality is because
\[\frac{|x-y|^2}{\tau}e^{-\frac{|x-y|^2}{8\tau}}\leq\sup_{r\geq 0}\,r\,e^{-r/8}\leq C.\]

Now we are trying to estimate (\ref{II ineq. by double integral}) and (\ref{III ineq. by double integral}) further. Taking $m=1+\frac{1}{n+1}$, it follows from Holder's inequality that 
{\large \begin{align*}
\int_{\Gamma_1}\tau^{-n/2}e^{-\frac{|x-y|^2}{4\tau}}\,dS_{y} &\leq \tau^{-n/2}\bigg(\int_{\Gamma_1}e^{-\frac{m|x-y|^2}{4\tau}}\,dS_{y}\bigg)^{1/m}|\Gamma_1|^{(m-1)/m} \\
&= \tau^{-\frac{n}{2}+\frac{n-1}{2m}} \bigg(\int_{\Gamma_1}\tau^{-\frac{n-1}{2}}e^{-\frac{m|x-y|^2}{4\tau}}\,dS_{y}\bigg)^{1/m}|\Gamma_1|^{(m-1)/m} \\
&\leq C\,\tau^{-\frac{n}{2}+\frac{n-1}{2m}}\,|\Gamma_1|^{\frac{m-1}{m}} \\
&= C\,\tau^{-\frac{2n+1}{2n+4}}|\Gamma_1|^{\frac{1}{n+2}},
\end{align*}}
where the second inequality is because of Lemma \ref{Lemma, bound for surface integral}. Thus (\ref{II ineq. by double integral}) leads to
\be\label{II ineq. by single integral}
II \leq C\,|\Gamma_1|^{\frac{1}{n+2}}\int_{0}^{t}M^{q}(t-\tau)\,\tau^{-\frac{2n+1}{2n+4}}\,d\tau.
\ee
By Holder's inequality again, 
\begin{align*}
\int_{0}^{t}M^{q}(t-\tau)\,\tau^{-\frac{2n+1}{2n+4}}\,d\tau
&\leq \bigg(\int_{0}^{t}M^{\frac{qm}{m-1}}(t-\tau)\,d\tau\bigg)^{\frac{m-1}{m}}\bigg(\int_{0}^{t}\tau^{-\frac{(2n+1)m}{2n+4}}\,d\tau\bigg)^{\frac{1}{m}} \\
&= \bigg(\int_{0}^{t}M^{q(n+2)}(\tau)\,d\tau\bigg)^{\frac{1}{n+2}}\bigg(\int_{0}^{t}\tau^{-\frac{2n+1}{2n+2}}\,d\tau\bigg)^{\frac{n+1}{n+2}}\\
&= C\,t^{\frac{1}{2n+4}}\,\bigg(\int_{0}^{t}M^{q(n+2)}(\tau)\,d\tau\bigg)^{\frac{1}{n+2}}.
\end{align*}
Based on this, (\ref{II ineq. by single integral}) becomes
\be\label{II's estimate}
II\leq C\,|\Gamma_1|^{\frac{1}{n+2}}\,t^{\frac{1}{2n+4}}\bigg(\int_{0}^{t}M^{q(n+2)}(\tau)\,d\tau\bigg)^{\frac{1}{n+2}}.
\ee 
To estimate $III$, it follows from (\ref{III ineq. by double integral}) and Holder's inequality that
\begin{align}\label{III's estimate}
III&\leq  C\,\bigg(\int_{0}^{t}M^{n+2}(\tau)\,d\tau\bigg)^{\frac{1}{n+2}}\bigg(\int_{0}^{t}\tau^{-\frac{1}{2}\frac{n+2}{n+1}}\,d\tau\bigg)^{\frac{n+1}{n+2}} \nonumber\\
&= C\,t^{\frac{n}{2n+4}}\bigg(\int_{0}^{t}M^{n+2}(\tau)\,d\tau\bigg)^{\frac{1}{n+2}}.
\end{align}

Combining (\ref{bdry. point estimate decomp.}), (\ref{I ineq. by max. initial}), (\ref{II's estimate}), (\ref{III's estimate}), we obtain
\begin{align*}
u^{*}(x,t) \leq & C\bigg[M_0+|\Gamma_1|^{\frac{1}{n+2}}\,t^{\frac{1}{2n+4}}\bigg(\int_{0}^{t}M^{q(n+2)}(\tau)\,d\tau\bigg)^{\frac{1}{n+2}}+ t^{\frac{n}{2n+4}}\bigg(\int_{0}^{t}M^{n+2}(\tau)\,d\tau\bigg)^{\frac{1}{n+2}}\bigg].
\end{align*}
Since $x$ is arbitrary on $\p\O$, by raising both sides to the power $n+2$,
\begin{align*}
M^{n+2}(t) &\leq C\bigg[M_0^{n+2}+|\Gamma_1|\,t^{1/2}\int_{0}^{t}M^{q(n+2)}(\tau)\,d\tau+t^{n/2}\int_{0}^{t}M^{n+2}(\tau)\,d\tau\bigg].
\end{align*}
As a consequence, 
\begin{align}\label{ineq. for maximal sol'n}
\widetilde{M}^{n+2}(t) &\leq C\bigg[M_0^{n+2}+|\Gamma_1|\,t^{1/2}\int_{0}^{t}M^{q(n+2)}(\tau)\,d\tau+t^{n/2}\int_{0}^{t}M^{n+2}(\tau)\,d\tau\bigg] \nonumber\\
&\leq C\bigg[M_0^{n+2}+|\Gamma_1|\,t^{1/2}\int_{0}^{t}\widetilde{M}^{q(n+2)}(\tau)\,d\tau+t^{n/2}\int_{0}^{t}\widetilde{M}^{n+2}(\tau)\,d\tau\bigg] \nonumber\\
&\leq C\big(1+t^{n/2}\big)\bigg[M_0^{n+2}+|\Gamma_1|\int_{0}^{t}\widetilde{M}^{q(n+2)}(\tau)\,d\tau+\int_{0}^{t}\widetilde{M}^{n+2}(\tau)\,d\tau\bigg].
\end{align}
We define
\be\label{energy fcn.}
E(t)=M_0^{n+2}+|\Gamma_1|\int_{0}^{t}\widetilde{M}^{q(n+2)}(\tau)\,d\tau+\int_{0}^{t}\widetilde{M}^{n+2}(\tau)\,d\tau,
\ee
then $E(0)=M_0^{n+2}$ and $E(t)$ is increasing. Moreover $E(t)$ also blows up at $T^{*}$, since $\widetilde{M}$ is increasing. Now (\ref{ineq. for maximal sol'n}) becomes
\[\widetilde{M}^{n+2}(t)\leq C\big(1+t^{n/2}\big)E(t)\]
and consequently 
\begin{align}\label{ODE for energy fcn}
E'(t) &=|\Gamma_1|\,\widetilde{M}^{q(n+2)}(t)+\widetilde{M}^{(n+2)}(t) \nonumber\\
&\leq C\,|\Gamma_1|\,\big(1+t^{n/2}\big)^{q}E^{q}(t)+C\big(1+t^{n/2}\big)E(t).
\end{align} 
This looks like the Bernoulli equation, so we multiply both sides by $E^{-q}(t)$ and define $\Psi(t)\triangleq E^{1-q}(t)$, then $\Psi(t)\rightarrow 0$ as $t$ approaches to $T^{*}$ and 
\be\label{linear ode for reciprocal of the energy}
\Psi'(t)+C(q-1)\big(1+t^{n/2}\big)\Psi(t)\geq -C(q-1)|\Gamma_1|\,\big(1+t^{n/2}\big)^{q}.
\ee
We introduce the integration factor $\mu(t)$ which is defined as 
\[\mu(t)\triangleq\exp\bigg[C\int_{0}^{t}(q-1)\big(1+\tau^{n/2}\big)\,d\tau\bigg],\quad\forall\,t\geq 0.\]
It is easy to see that
\begin{align}\label{est. for int. factor}
\mu(t) &\leq \exp\Big[C\big(t+t^{1+\frac{n}{2}}\big)\Big] \nonumber\\
&\leq \exp\Big[C\big(1+t^{1+\frac{n}{2}}\big)\Big] \nonumber\\
&\leq C\,\exp\big(C\,t^{1+\frac{n}{2}}\big).
\end{align}
Multiplying (\ref{linear ode for reciprocal of the energy}) by $\mu(t)$, one gets
\[\big(\mu(t)\Psi(t)\big)'\geq -C(q-1)|\Gamma_1|\,\big(1+t^{n/2}\big)^{q}\mu(t).\]
Integrating this inequality and noticing that $\mu(0)\Psi(0)=M_0^{-(n+2)(q-1)}$, one obtains
\be\label{ineq. for the recip. energy fcn}
\mu(t)\Psi(t)\geq M_0^{-(n+2)(q-1)}-C(q-1)|\Gamma_1|\,\int_{0}^{t}\big(1+\tau^{n/2}\big)^{q}\,\mu(\tau)\,d\tau.\ee
It follows from (\ref{est. for int. factor}) that
\begin{align*}
\int_{0}^{t}\big(1+\tau^{n/2}\big)^{q}\,\mu(\tau)\,d\tau 
&\leq C\int_{0}^{t}\big(1+\tau^{n/2}\big)^{q}\,\exp\big(C\,\tau^{1+\frac{n}{2}}\big)\,d\tau\\
&\leq C\big(1+t^{n/2}\big)^{q}\,t\,\exp\big(C\,t^{1+\frac{n}{2}}\big)\\
&\leq C\exp\big[(C+1)\,t^{1+\frac{n}{2}}\big].
\end{align*}
Plugging in (\ref{ineq. for the recip. energy fcn}), we obtain 
\begin{equation*}
\mu(t)\Psi(t)\geq M_0^{-(n+2)(q-1)}-C(q-1)|\Gamma_1|\,\exp\big(C\,t^{1+\frac{n}{2}}\big).
\end{equation*}
Taking $t\rightarrow T^{*}$, one obtains
\begin{align*}
C(q-1)|\Gamma_1|\,\exp\Big[C\,(T^{*})^{1+\frac{n}{2}}\Big]
&\geq M_0^{-(n+2)(q-1)}\\
\exp\Big[C\,(T^{*})^{1+\frac{n}{2}}\Big] &\geq C^{-1}(q-1)^{-1}|\Gamma_1|^{-1}M_0^{-(n+2)(q-1)}\\
C\,(T^{*})^{\frac{n+2}{2}} &\geq \ln \Big(|\Gamma_1|^{-1}\Big)-(n+2)(q-1)\ln M_0-\ln(q-1)-\ln C.\\
\end{align*}
Hence, 
\bes T^{*}\geq C^{-\frac{2}{n+2}}\bigg[\ln \Big(|\Gamma_1|^{-1}\Big)-(n+2)(q-1)\ln M_0-\ln(q-1)-\ln C\bigg]^{\frac{2}{n+2}}. \ees
\end{proof}

\begin{remark}
From Theorem \ref{Thm, lower bound of blow-up time}, one can see that $T^{*}$ goes to infinity if $|\Gamma_1|\rightarrow0$, $M_0\rightarrow 0$ or $q\rightarrow 1$. In addition, when $|\Gamma_1|\rightarrow 0$, the lower bound of $T^{*}$ given in Theorem \ref{Thm, lower bound of blow-up time} is about the size
\[\Big[\ln \big(|\Gamma_1|^{-1}\big)\Big]^{2/(n+2)}.\]
\end{remark}

\subsection{Comparison to previous work}
\label{Secsub, Compare lower bounds}

Although many papers study the heat equation under Dirichlet or Neumann boundary conditions, few of them deal with the lower bound of the blow-up time. One of the influential work in this area is \cite{PS1}, however, their method has some limitations:
\begin{itemize}
\item[(i)] $n=2$ or $3$;
\item[(ii)] $\O$ needs to be convex;
\item[(iii)] Only applicable to the case when $\Gamma_1=\p\O$. 
\end{itemize} 
As we have seen in Subsection \ref{Secsub, lower bound}, our method of analyzing the Representation formula does not have any of these limitations. To show another advantage of this method, we compare with \cite{PS1} under the same limitations (i), (ii) and (iii). 

But this time we will not directly use the result (\ref{lower bound}) since that estimate focuses on the behavior of $T^{*}$ when $|\Gamma_1|\rightarrow 0$. As a result, although still applying Representation formula (\ref{Representation formula for bdry.}), we focus on $M_0\triangleq \max\limits_{\ol{\O}}u_0$ instead of $|\Gamma_1|$ to derive the lower bound (\ref{lower bound for the whole bdry.}). Then it is shown in the end of this subsection that if $M_0$ is big and $u_0$ does not change much on $\ol{\O}$, then our estimate (\ref{lower bound for the whole bdry.}) is larger than the result in \cite{PS1}.

In \cite{PS1}, when $n=3$, the authors consider the energy function
\[\phi(t)\triangleq \int_{\O}(u^{*})^{2m}(x,t)\,dx,\] 
which they prove to blow up at the same time $T^{*}$ as $u^{*}$ for $m\geq 2q-2$. They derive a first order differential inequality for $\phi(t)$ by using a technique developed in \cite{PS2} and show $\phi(t)$ remains bounded before some time $T_0$, therefore $T_0$ is a lower bound for $T^{*}$. From their paper, they attain the following estimate:
\be\label{lower bound in 2009, n=3}
T^{*}\geq C\,\bigg(\int_{\O}u_{0}^{2m}(x,t)\,dx\bigg)^{-2}.\ee 
When $n=2$, the arguments are similar and they obtain
\be\label{lower bound in 2009, n=2}
T^{*}\geq C\,\bigg(\int_{\O}u_{0}^{4m}(x,t)\,dx\bigg)^{-1}.\ee 

Now assuming (i)-(iii) and $M_0\geq 1$, we will estimate similarly as we did in Subsection \ref{Secsub, lower bound} to obtain a lower bound, but focusing on $M_0$ instead of $|\Gamma_1|$, since $\Gamma_1$ has been fixed to be $\p\O$ now. The following notations are as same as those in Subsection \ref{Secsub, lower bound}. Based on (\ref{energy fcn.}), (\ref{ODE for energy fcn}) and noticing the assumption $|\Gamma_1|=|\p\O|$, we have \[E(0)=M_0^{n+2}\] and 
\begin{align}\label{ODE for energy fcn for whole bdry.}
E'(t) &\leq C\Big[\big(1+t^{n/2}\big)^{q}E^{q}(t)+\big(1+t^{n/2}\big)E(t)\Big].
\end{align} 
Due to the assumption that $M_0\geq 1$, then $E(t)\geq 1$ for all $t\geq 0$.  
As a consequence, (\ref{ODE for energy fcn for whole bdry.}) implies
\be\label{standard ODE for energy fcn for whole bdry.}
E'(t)\leq C\big(1+t^{n/2}\big)^{q}E^{q}(t).
\ee
Integrating (\ref{standard ODE for energy fcn for whole bdry.}), we get
\begin{align*}
\int_{0}^{t}\frac{E'(\tau)}{E^{q}(\tau)}\,d\tau &\leq C\int_{0}^{t}\big(1+\tau^{n/2}\big)^{q}\,d\tau, \\
E^{1-q}(t)-M_0^{(1-q)(n+2)} &\geq (1-q)\,C\,\Big(t+t^{1+\frac{nq}{2}}\Big), \\
E^{1-q}(t) & \geq M_0^{-(q-1)(n+2)}-(q-1)\,C\,\Big(t+t^{1+\frac{nq}{2}}\Big).
\end{align*} 
Taking $t\rightarrow T^{*}$,
\bes (q-1)\,C\,\Big[T^{*}+(T^{*})^{1+\frac{nq}{2}}\Big]\geq M_0^{-(q-1)(n+2)}. \ees
If $T^{*}\leq 1$, then $T^{*}\geq (T^{*})^{1+\frac{nq}{2}}$ and therefore 
\[(q-1)\,C\,T^{*}\geq M_0^{-(q-1)(n+2)}.\]
Hence we obtain the following statement.
\begin{theorem}\label{Thm, lower bound for the whole bdry.}
Suppose $T^{*}$ is the maximal existence time for (\ref{Prob, Simple Model}) with $\Gamma_1=\p\O$ and $M_0\geq 1$, then  
\be\label{lower bound for the whole bdry.}
T^{*}\geq \min\Big\{1, M_0^{-(q-1)(n+2)}/C\Big\},\ee
where $C$ is a positive constant which only depends on $n$,  $\O$ and $q$. 
\end{theorem}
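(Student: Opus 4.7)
The plan is to follow the same structure as the derivation of Theorem \ref{Thm, lower bound of blow-up time} in Subsection \ref{Secsub, lower bound}, but to track the dependence on $M_0$ rather than on $|\Gamma_1|$. First I would reuse the representation-formula analysis already carried out there, which produced the differential inequality (\ref{ODE for energy fcn}) for the energy function $E(t)$ defined in (\ref{energy fcn.}). With $\Gamma_1=\p\O$ fixed, the factor $|\Gamma_1|=|\p\O|$ is absorbed into the generic constant $C$, so the right-hand side of (\ref{ODE for energy fcn}) becomes $C\big[(1+t^{n/2})^q E^q(t)+(1+t^{n/2})E(t)\big]$.

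The key new ingredient is the hypothesis $M_0\geq 1$. Since $E(0)=M_0^{n+2}\geq 1$ and $E$ is non-decreasing, we have $E(t)\geq 1$ on $[0,T^{*})$, hence $E(t)\leq E^q(t)$, which collapses the inequality into the clean Bernoulli-type estimate $E'(t)\leq C(1+t^{n/2})^q E^q(t)$. Unlike the treatment in Subsection \ref{Secsub, lower bound}, where an integrating factor was needed to kill the linear term, here I can just separate variables and integrate directly.

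Next I would divide by $E^q$ and integrate from $0$ to $t$, yielding
\[E^{1-q}(t)\geq M_0^{-(q-1)(n+2)}-C(q-1)\big(t+t^{1+nq/2}\big).\]
Letting $t\to T^{*}$, the blow-up of $E$ (which occurs at the same time as that of $u^{*}$, as recorded in Subsection \ref{Secsub, lower bound}) forces the left-hand side to tend to $0$, giving $C(q-1)\big(T^{*}+(T^{*})^{1+nq/2}\big)\geq M_0^{-(q-1)(n+2)}$. To extract a clean lower bound on $T^{*}$, I split into cases: if $T^{*}>1$ the conclusion $T^{*}\geq\min\{1, M_0^{-(q-1)(n+2)}/C\}$ is automatic; if $T^{*}\leq 1$ then $(T^{*})^{1+nq/2}\leq T^{*}$, so the inequality reduces to $2C(q-1)T^{*}\geq M_0^{-(q-1)(n+2)}$, and absorbing constants gives the stated bound.

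I do not anticipate a serious obstacle, since the heavy analytic work has already been performed in Subsection \ref{Secsub, lower bound}; this statement is essentially a corollary once the monotonicity boost provided by $M_0\geq 1$ is exploited. The only genuine subtlety is bookkeeping: one must verify that the constant $C$ depends only on $n,\,\O,\,q$ and not on $M_0$ throughout the chain of inequalities, and that the case split at $T^{*}=1$ is handled with a single constant so that the final $\min$ expression in (\ref{lower bound for the whole bdry.}) is valid.
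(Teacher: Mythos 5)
Your proposal is correct and follows essentially the same route as the paper: both reduce (\ref{ODE for energy fcn}) to $E'\leq C(1+t^{n/2})^qE^q$ using $E\geq 1$ (from $M_0\geq 1$), separate variables, send $t\to T^*$, and handle the case $T^*\leq 1$ by absorbing $(T^*)^{1+nq/2}\leq T^*$ into the constant. No substantive differences.
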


Now let's compare (\ref{lower bound in 2009, n=3}) and (\ref{lower bound in 2009, n=2}) with (\ref{lower bound for the whole bdry.}). If $M_0$ is very large and the initial function $u_0$ does not oscillate too much, then both (\ref{lower bound in 2009, n=3}) and (\ref{lower bound in 2009, n=2}) are about the size $M_0^{-4m}$. On the other hand, (\ref{lower bound for the whole bdry.}) is about the size $M_0^{-(q-1)(n+2)}$. Recalling that $m\geq 2q-2$, so we have
\[4m\geq 8(q-1)\geq (q-1)(n+2),\]
for no matter $n=2$ or $3$. Thus, our estimate for the lower bound is larger .

\subsection{Weak Solution and Representation Formula}
\label{Secsub, Weak Soln and Rep. Form.}
By Theorem \ref{Thm, fundamental theorem}, there exists a nonnegative unique maximal solution $u^{*}\in C^{2,1}\big(\O\times(0,T^{*})\big)\cap C\big(\overline{\O}\times [0,T^{*})\big)$ to (\ref{Prob, Simple Model}) such that
\begin{equation}\label{Prob maximal}
\left\{\begin{array}{lll}
u^{*}_{t}(x,t)=\Delta u^{*}(x,t) &\text{in}& \Omega\times (0,T^{*}),\\
\frac{\partial u^{*}}{\partial n}(x,t)=(u^{*})^{q}(x,t) &\text{on}& \Gamma_1\times (0,T^{*}),\\
\frac{\partial u^{*}}{\partial n}(x,t)=0 &\text{on}& \Gamma_2\times (0,T^{*}),\\
u^{*}(x,0)=u_0(x) &\text{in}& \Omega,
\end{array}\right.
\end{equation}
where $T^{*}$ is the maximal existence time as in Definition \ref{Def, maximal sol'n, Target Prob}. In this subsection, we will first verify that the solution $u^{*}$ to (\ref{Prob, Simple Model}) is also a weak solution (See Definition \ref{Def, weak soln}) and then derive Representation formulas (\ref{Representation formula for inside}) and (\ref{Representation formula for bdry.}) for $u^{*}$. 
\begin{definition}\label{Def, weak soln}
Suppose $T^{*}$ is the maximal existence time for (\ref{Prob, Simple Model}), then a function $u\in C\big(\ol{\O}\times[0,T^{*})\big)$ is called a weak solution of (\ref{Prob, Simple Model}) if for any $t\in(0,T^{*})$ and for any $\phi\in C^{2}\big(\overline{\O}\times[0,t]\big)$, 
\be\label{weak soln def}
\begin{split}
&\int_{0}^{t}\int_{\O}(\phi_{\tau}+\Delta\phi)(y,\tau)\,u^{*}(y,\tau)\,dy\,d\tau =  \int_{\O}\phi(y,t)\,u^*(y,t)-\phi(y,0)\,u_0(y)\,dy \\
& -\int_{0}^{t}\int_{\Gamma_1}\phi(y,\tau)\,(u^{*})^{q}(y,\tau)\,dS_{y}\,d\tau+\int_{0}^{t}\int_{\p\O}u^{*}(y,\tau)\,\frac{\p\phi}{\p n}(y,\tau)\,dS_{y}\,d\tau.
\end{split} \ee
\end{definition}

In order to prove $u^*$ satisfies (\ref{weak soln def}), we are again seeking some smoother approximations for $u^{*}$. Let $\{\eta_{j}\}_{j\geq 1}$ be the same sequence of cut-off functions as defined in (\ref{cut-off fcns}) and consider the following problem: for any $j\geq 1$,
\begin{equation}\label{approx. pb. with fixed nonlin. term}
\left\{\begin{array}{lll}
(w_j)_t(x,t)=\Delta w_j(x,t) &\text{in}& \O\times(0,T),\\
\frac{\p w_j}{\p n}(x,t)=\eta_{j}(x)\,(u^{*})^{q}(x,t) &\text{on}& \p\O\times(0,T),\\
w_j(x,0)=u_{0}(x) &\text{in}& \O.
\end{array}\right.
\end{equation}
Here we want to point out that (\ref{approx. pb. with fixed nonlin. term}) is different from (\ref{smoother eq. by cut-off fcn}), since (\ref{approx. pb. with fixed nonlin. term}) is linear in $w_j$ while (\ref{smoother eq. by cut-off fcn}) is nonlinear in $v_j$. As a result, the solution of (\ref{approx. pb. with fixed nonlin. term}) is easier to compare with $u^{*}$. 

Since $u^{*}$ is only defined in $\ol{\O}\times[0,T^{*})$, we should consider (\ref{approx. pb. with fixed nonlin. term}) only for $T\leq T^{*}$. In addition, by Subsection \ref{Subsec, Linear Case}, the maximal solution $w_j$ to (\ref{approx. pb. with fixed nonlin. term}) also exists until $T^{*}$, since (\ref{approx. pb. with fixed nonlin. term}) is linear in $w_j$. Thus, we conclude that $w_{j}\in C^{2,1}\big(\O\times(0,T^{*})\big)\cap C\big(\ol{\O}\times[0,T^{*})\big)$ . Moreover, it follows from (\cite{Friedman}, the last Corollary, Sec. 4, Chap. 5) that for any $1\leq i\leq n$ and $\tau_0>0$, 
\be\label{cont. deri. fixed}
\frac{\p w_{j}}{\p x_{i}}\in C\big(\overline{\O}\times[\tau_0 , T^{*})\big). \ee 
Hence, $w_j$ has better regularity than $u^{*}$ and this will help us to justify the calculations in the proof of Theorem \ref{Thm, max. sol'n implies weak sol'n}. But before that, let's demonstrate two other basic properties of $\{w_j\}_{j\geq 1}$, namely Lemma \ref{Lemma, bound for approx. sol'n} and Lemma \ref{Lemma, conv. of approx. sol'n}.

\begin{lemma}\label{Lemma, bound for approx. sol'n}
For any $j\geq 1$, $w_{j}\leq u^{*}$ on $\ol{\O}\times[0,T^{*})$.
\end{lemma}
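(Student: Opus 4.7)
The plan is to reduce the inequality to the linear weak comparison principle (Lemma~\ref{Lemma, weak comparison for linear prob.}, or equivalently Corollary~\ref{Cor, comparison and uniqueness for linear prob. of simple model}) applied to the difference $z := u^{*} - w_{j}$ on $\overline{\O}\times[0,T]$ for any fixed $T<T^{*}$, and then let $T\nearrow T^{*}$. Both $u^{*}$ and $w_{j}$ satisfy the heat equation inside $\O$ and share the initial datum $u_{0}$, so $z$ satisfies $z_{t}=\Delta z$ in $\O\times(0,T]$ with $z(\cdot,0)\equiv 0$ and the required regularity $z\in C^{2,1}(\O\times(0,T^{*}))\cap C(\overline{\O}\times[0,T^{*}))$. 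The problem therefore reduces to checking that $\frac{\p z}{\p n}\geq 0$ on all of $\p\O\times(0,T]$, after which the linear comparison lemma gives $z\geq 0$.

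For the boundary computation I would treat the three pieces of $\p\O$ separately. On $\Gamma_{1}$ the flux is $\frac{\p z}{\p n}=(u^{*})^{q}-\eta_{j}(u^{*})^{q}=(1-\eta_{j})(u^{*})^{q}\geq 0$, since $u^{*}\geq 0$ by Theorem~\ref{Thm, fundamental theorem} and $0\leq\eta_{j}\leq 1$ by (\ref{cut-off fcns}). On $\Gamma_{2}$, the flux of $u^{*}$ is $0$ and $\eta_{j}\equiv 0$ on $\p\O\setminus\Gamma_{1}\supset\Gamma_{2}$, so $\frac{\p z}{\p n}=0$. For the interface piece, note that because $\Gamma_{1}$ and $\Gamma_{2}$ are disjoint and open, $\wt{\Gamma}=\ol{\Gamma}_{1}\cap\ol{\Gamma}_{2}\subset\p\O\setminus\Gamma_{1}$, hence $\eta_{j}$ vanishes there as well; combined with the interface prescription (\ref{interface bdry deri. for simple model}) from Definition~\ref{Def, soln to simple model}, this yields $\frac{\p z}{\p n}=\tfrac{1}{2}(u^{*})^{q}\geq 0$ on $\wt{\Gamma}$. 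Therefore $z$ is a linear supersolution with nonnegative Neumann data and zero initial data, and the linear comparison principle gives $z\geq 0$ on $\overline{\O}\times[0,T]$. Letting $T\nearrow T^{*}$ yields $w_{j}\leq u^{*}$ on $\overline{\O}\times[0,T^{*})$.

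The only subtle point I expect is the treatment of the interface $\wt{\Gamma}$, where $u^{*}$ has a genuine jump in its Neumann trace while $w_{j}$ does not (because the boundary flux $\eta_{j}(u^{*})^{q}$ for the linear problem (\ref{approx. pb. with fixed nonlin. term}) is continuous across $\wt{\Gamma}$). It is precisely the symmetric ``$\tfrac{1}{2}$'' convention imposed in Definition~\ref{Def, soln to simple model} that makes the flux of $z$ on $\wt{\Gamma}$ well defined and nonnegative, and hence admissible as data in the linear comparison lemma; without it the computation in the previous paragraph would be ambiguous on $\wt{\Gamma}$. Once this is in place, the rest is a direct invocation of the existing appendix machinery.
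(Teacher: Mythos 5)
Your proof is correct and follows essentially the same route as the paper: form the difference $u^{*}-w_{j}$, observe it solves the heat equation with zero initial data and nonnegative Neumann flux $(1-\eta_{j})(u^{*})^{q}$, and invoke Corollary~\ref{Cor, comparison and uniqueness for linear prob. of simple model}. Your extra care at the interface $\wt{\Gamma}$ (where the paper is silent) is a welcome refinement but does not change the argument.
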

\begin{proof}
Denoting $v_{j}=u^*-w_{j}$, then for any $T\in(0,T^{*})$, 
\begin{equation*}
\left\{\begin{array}{lll}
(v_{j})_{t}(x,t)-\Delta v_{j}(x,t)=0 &\text{in}& \O\times(0,T],\\
\frac{\p v_{j}}{\p n}(x,t)=\big[1-\eta_j(x)\big](u^{*})^{q}(x,t)\geq 0 &\text{on}& \p\O\times(0,T],\\
v_{j}(x,0)=0 &\text{in}& \O.
\end{array}\right.
\end{equation*}
By Corollary \ref{Cor, comparison and uniqueness for linear prob. of simple model},  $v_{j}\geq 0$ on $\ol{\O}\times[0,T]$. \end{proof}

The following pointwise convergence property is essential to justify the approximation process in the proof of Theorem \ref{Thm, max. sol'n implies weak sol'n}.

\begin{lemma}\label{Lemma, conv. of approx. sol'n}
For any $(x,t)\in\overline{\O}\times[0,T^{*})$, $\lim\limits_{j\rightarrow\infty} w_{j}(x,t)=u^*(x,t)$.
\end{lemma}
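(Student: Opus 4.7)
\medskip\noindent\textbf{Proof proposal.} The plan is to show $\{w_j(x,t)\}$ is monotone increasing in $j$ and bounded above by $u^*(x,t)$, so that a pointwise limit $w\leq u^*$ exists, and then to identify $w$ with $u^*$ by passing to the limit in the integral representation that underlies the existence theory in Appendix \ref{Sec, exist. and uniq.}.

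First, I would establish monotonicity. Fix $j\geq 1$ and set $z_j\triangleq w_{j+1}-w_j$. By linearity, $z_j$ solves the heat equation in $\O\times(0,T^*)$ with zero initial data and Neumann flux $(\eta_{j+1}-\eta_j)\,(u^*)^q$ on $\p\O\times(0,T^*)$. The monotonicity built into (\ref{cut-off fcns}) gives $\eta_{j+1}\geq \eta_j$ pointwise on $\p\O$, and $u^*\geq 0$ by Theorem \ref{Thm, fundamental theorem}, so this boundary flux is non-negative. The weak comparison principle for the linear problem, Corollary \ref{Cor, comparison and uniqueness for linear prob. of simple model}, then yields $z_j\geq 0$ on $\ol{\O}\times[0,T]$ for every $T<T^*$. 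Combined with Lemma \ref{Lemma, bound for approx. sol'n}, for each $(x,t)\in\ol{\O}\times[0,T^*)$ the sequence $\{w_j(x,t)\}$ is monotone increasing and bounded above by $u^*(x,t)$, hence converges to some $w(x,t)\in[0,u^*(x,t)]$.

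Second, to identify $w$ with $u^*$, I would invoke the single-layer representation for the solution of the linear problem (\ref{approx. pb. with fixed nonlin. term}) constructed in Appendix \ref{Sec, exist. and uniq.}. Both $u^*$ and $w_j$ admit integral representations of the same form, the only difference being that the boundary flux $(u^*)^q\,\chi_{\Gamma_1}$ appearing in the representation for $u^*$ is replaced by $\eta_j\,(u^*)^q$ in the representation for $w_j$. Since $\eta_j\nearrow \chi_{\Gamma_1}$ pointwise on $\p\O$ and $\eta_j\,(u^*)^q\leq (u^*)^q\in L^{\infty}(\p\O\times[0,T])$ for each $T<T^*$, monotone and dominated convergence, used in conjunction with the heat-kernel bound of Lemma \ref{Lemma, bound for surface integral}, let me pass to the limit $j\to\infty$ in every term. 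The limit $w$ then satisfies exactly the same integral equation as $u^*$, and the uniqueness of that integral equation (again from Appendix \ref{Sec, exist. and uniq.}) forces $w\equiv u^*$.

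The main obstacle is the passage to the limit in the dipole-type boundary term, whose kernel involves $D\Phi\cdot\ora{n}$ and is only borderline integrable along $\p\O\times(0,t)$. The uniform-in-$j$ pointwise bound $w_j\leq u^*\leq\|u^*\|_{L^{\infty}(\ol{\O}\times[0,T])}$ together with the surface-integral control provided by Lemma \ref{Lemma, bound for surface integral} yields a $\tau$-integrable majorant of the form $\tau^{-1/2}$, which is exactly what Lebesgue's dominated convergence theorem requires. A conceivable alternative would be to work entirely on the PDE side, upgrading the pointwise monotone convergence to a mode of convergence that commutes with the heat operator via interior parabolic estimates for $w_j$, and then applying Corollary \ref{Cor, comparison and uniqueness for linear prob. of simple model} to $u^*-w$; but the representation-based route seems cleaner since it matches the framework used elsewhere in the paper to derive (\ref{Representation formula for bdry.}).
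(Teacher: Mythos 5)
Your overall strategy (represent $w_j$ and $u^*$ by boundary potentials and pass to the limit) is the same family of argument the paper uses, and your preliminary monotonicity step is correct --- $\eta_{j+1}\geq\eta_j$ and $(u^*)^q\geq 0$ do give $w_{j+1}\geq w_j$ via Corollary \ref{Cor, comparison and uniqueness for linear prob. of simple model} --- but it does no real work: the paper never needs it, and it does not help with the step where your argument actually has a gap.

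The gap is in the identification of the limit. You say both $u^*$ and $w_j$ ``admit integral representations of the same form, the only difference being the boundary flux,'' and that dominated convergence with the majorant from Lemma \ref{Lemma, bound for surface integral} lets you pass to the limit ``in every term.'' There are two ways to read this, and neither closes as stated. (i) If you mean the single-layer representation of Appendix \ref{Sec, exist. and uniq.}, i.e. $w_j(x,t)=\int\Phi u_0+\int_0^t\int_{\p\O}\Phi\,\varphi_j\,dS\,d\tau$, then the boundary flux does \emph{not} appear explicitly in that formula: it enters only through the density $\varphi_j$, which is defined implicitly as the solution of the integral equation (\ref{varphi eq.}). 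The heart of the matter is to prove $\varphi_j$ converges, and for that Lemma \ref{Lemma, bound for surface integral} is not the right tool; one needs the resolvent (Neumann-series) kernel $K^{*}$ of Lemma \ref{Lemma, property of iterative kernels} and its bound (\ref{bound for accumulate kernel, point. conv., linear}), which turn the implicit equation into the explicit formula (\ref{explicit formula for bdry fcn, point. conv., linear}) to which dominated convergence can be applied. This is exactly what the paper does, applied to the difference $v_j=u^*-w_j$, whose forcing $H_j=2[\eta^*-\eta_j](u^*)^q$ tends to $0$ boundedly, giving $\varphi_j\to 0$ and hence $v_j\to 0$ directly --- no monotonicity and no uniqueness argument for a limiting integral equation are needed. (ii) If instead you mean the representation formula (\ref{Representation formula for inside}) with the dipole term $\int\int D\Phi\cdot\ora{n}\,u^*$ (your last paragraph's discussion of the ``dipole-type boundary term'' suggests this), then invoking it for $u^*$ is circular in this paper's architecture: (\ref{Representation formula for inside}) is proved in Theorem \ref{Thm, weak sol'n implies rep. formula} via Theorem \ref{Thm, max. sol'n implies weak sol'n}, whose proof uses precisely the lemma you are trying to establish. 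Either fix the argument by inserting the resolvent-kernel step for the densities $\varphi_j$, or abandon the representation for $u^*$ and work with $v_j=u^*-w_j$ as the paper does.
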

\begin{proof}
Since $w_j(x,0)=u_0(x)=u^{*}(x,0)$, it suffices to prove for any $T\in(0,T^{*})$, 
\[\lim_{j\rightarrow\infty} w_{j}(x,t)=u^*(x,t), \quad\forall\, (x,t)\in\ol{\O}\times(0,T].\]
We now fix $T\in(0,T^{*})$ and define $\eta^{*}:\Gamma_1\cup\Gamma_2\rightarrow\m{R}$ by
$$\eta^{*}(x)=\left\{\begin{array}{ll}
1, & \quad x\in\Gamma_1,\\
0, & \quad x\in\Gamma_2.
\end{array}\right.$$
Let $v_{j}=u^*-w_j$, then 
\be\label{pointwise conv. eq.}
\left\{\begin{array}{lll}
(v_j)_{t}(x,t)=\Delta v_j(x,t) & \quad\text{in}\quad & \O\times(0,T],\\
\frac{\p v_j}{\p n}(x,t)=[\eta^{*}(x)-\eta_{j}(x)]\,(u^{*})^q(x,t) & \quad\text{on}\quad & (\Gamma_1\cup\Gamma_2)\times(0,T],\\
v_j(x,0)=0 & \quad\text{in}\quad & \O.
\end{array}\right.\ee
Similar to the proof of Theorem \ref{Thm, exist. for linear simple model}, $v_j$ can be written in the following form 
\be\label{explicit expression for difference}
v_j(x,t)=\int_{0}^{t}\int_{\p\O}\Phi(x-y,t-\tau)\,\varphi_{j}(y,\tau)\,dS_{y}\,d\tau,\quad\forall\,(x,t)\in \overline{\O}\times(0,T], \ee
where $\varphi_j\in\mathcal{B}_{T}$ satisfies for any $(x,t)\in\big(\Gamma_1\cup\Gamma_2\big)\times(0,T]$,
\be\label{varphi eq.}
\varphi_j(x,t)=\int_{0}^{t}\int_{\p\O}K(x,t;y,\tau)\,\varphi_{j}(y,\tau)\,dS_{y}\,d\tau +H_{j}(x,t)\ee
with \[K(x,t;y,\tau)= -2(D\Phi)(x-y,t-\tau)\cdot\ora{n}(x)\] and 
\[H_{j}(x,t)=2[\eta^{*}(x)-\eta_{j}(x)]\,(u^{*})^q(x,t).\]
Since the function $K$ also satisfies (\ref{est. for the kernel of the induction def for bdry fcn}), we can follow the same way as the derivations of (\ref{est for accumulated kernel, linear}), (\ref{explicit formula with accumulated kernel, linear})  to obtain
\be\label{explicit formula for bdry fcn, point. conv., linear}
\varphi_j(x,t)=\int_{0}^{t}\int_{\p\O}K^{*}(x,t;y,\tau)\,H_j(y,\tau)\,dS_{y}\,d\tau + H_j(x,t) \ee
for some function $K^{*}$. Moreover, there exists a constant $C^{*}$, only depending on $n$, $\O$ and $T$, such that
\be\label{bound for accumulate kernel, point. conv., linear}
|K^{*}(x,t;y,\tau)|\leq C^{*}(t-\tau)^{-3/4}\,|x-y|^{-(n-3/2)}. \ee

Due to the choice of $\{\eta_j\}_{j\geq 1}$ and the fact that $u^{*}$ is bounded on $\overline{\O}\times[0,T]$, we know $H_{j}$ is also bounded on $\overline{\O}\times[0,T]$ and
\[\lim_{j\rightarrow\infty}H_{j}(x,t)=0,\quad\forall\, (x,t)\in\big(\Gamma_1\cup\Gamma_2\big)\times(0,T].\]
Then it follows from (\ref{explicit formula for bdry fcn, point. conv., linear}), (\ref{bound for accumulate kernel, point. conv., linear}) and the Lebesgue's dominated convergence theorem that 
\[\lim_{j\rightarrow\infty}\varphi_{j}(x,t)=0,\quad\forall\, (x,t)\in\big(\Gamma_1\cup\Gamma_2\big)\times(0,T].\]
In addition, the boundedness of $H_j$ implies the boundedness of $\varphi_{j}$, hence by (\ref{explicit expression for difference}) and the Lebesgue's dominated convergence theorem again, we get
\[\lim\limits_{j\rightarrow\infty}v_j(x,t)=0, \quad\forall\, (x,t)\in\overline{\O}\times(0,T].\] 
That is
\[\lim_{j\rightarrow\infty} w_{j}(x,t)=u^*(x,t), \quad\forall\, (x,t)\in\ol{\O}\times(0,T].\]
\end{proof}

Next, we will verify $u^*$ to be a weak solution by taking advantage of $\{w_j\}_{j\geq 1}$. 

\begin{theorem}\label{Thm, max. sol'n implies weak sol'n}
The maximal solution $u^{*}$ to (\ref{Prob, Simple Model}) is also a weak solution as in Definition \ref{Def, weak soln}.
\end{theorem}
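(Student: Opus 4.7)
The plan is to derive the integral identity in (\ref{weak soln def}) from Green's second identity applied to the smoother approximations $w_j$ solving (\ref{approx. pb. with fixed nonlin. term}), then pass to the limit $j\to\infty$ using Lemmas \ref{Lemma, bound for approx. sol'n} and \ref{Lemma, conv. of approx. sol'n}. Approximation is unavoidable for exactly the reason flagged at the start of Subsection \ref{Secsub, Weak Soln and Rep. Form.}: $u^*$ is not $C^1$ up to $\p\O$ across $\wt{\Gamma}$, so integration by parts against $\phi$ is not directly available. In contrast, each $w_j$ has $\nabla w_j$ continuous up to $\ol{\O}$ on $[\tau_0,T^*)$ for every $\tau_0>0$ by (\ref{cont. deri. fixed}), which is precisely what Green's identity requires.

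Fix $t\in(0,T^*)$ and $\phi\in C^2(\ol{\O}\times[0,t])$, and choose a small $\tau_0>0$. For each $j$, I multiply $(w_j)_\tau=\Delta w_j$ by $\phi$, integrate over $\O\times[\tau_0,t]$, integrate by parts in $\tau$, and apply Green's second identity in the space variable. These steps are legitimate because $w_j\in C^{2,1}(\O\times(0,T^*))$ with $\nabla w_j\in C(\ol{\O}\times[\tau_0,T^*))$. The computation produces
\begin{align*}
\int_{\tau_0}^{t}\!\int_{\O}(\phi_\tau+\Delta\phi)\,w_j\,dy\,d\tau &= \int_{\O}\phi(y,t)\,w_j(y,t)-\phi(y,\tau_0)\,w_j(y,\tau_0)\,dy\\
&\quad+\int_{\tau_0}^{t}\!\int_{\p\O}\Big(w_j\,\tfrac{\p\phi}{\p n}-\phi\,\tfrac{\p w_j}{\p n}\Big)\,dS_y\,d\tau,
\end{align*}
and then substituting $\p w_j/\p n=\eta_j\,(u^*)^q$ converts the last boundary piece into $-\int_{\tau_0}^{t}\!\int_{\p\O}\phi\,\eta_j\,(u^*)^q\,dS_y\,d\tau$.

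Next I let $j\to\infty$. Lemma \ref{Lemma, bound for approx. sol'n} combined with the maximum principle gives $0\le w_j\le u^*$ on $\ol{\O}\times[0,t]$, so $\{w_j\}$ is uniformly bounded by $\|u^*\|_{L^{\infty}(\ol{\O}\times[0,t])}$; together with the pointwise convergence $w_j\to u^*$ from Lemma \ref{Lemma, conv. of approx. sol'n} and the continuity and boundedness of $\phi,\phi_\tau,\Delta\phi,\p\phi/\p n$ on $\ol{\O}\times[0,t]$, Lebesgue's dominated convergence theorem passes to the limit in every term involving $w_j$. For the nonlinear boundary integral, the construction (\ref{cut-off fcns}) with $\Gamma_{1,j}\nearrow\Gamma_1$ gives $\eta_j\to\chi_{\Gamma_1}$ pointwise on $\p\O$ and $0\le\eta_j\le 1$; since $(u^*)^q$ is bounded on $\p\O\times[0,t]$, dominated convergence yields $\int_{\tau_0}^{t}\!\int_{\p\O}\phi\,\eta_j\,(u^*)^q\,dS\,d\tau\to\int_{\tau_0}^{t}\!\int_{\Gamma_1}\phi\,(u^*)^q\,dS\,d\tau$.

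Finally I send $\tau_0\to 0^+$. Because $u^*\in C(\ol{\O}\times[0,T^*))$ with $u^*(\cdot,0)=u_0$, the spatial integral $\int_\O\phi(y,\tau_0)\,u^*(y,\tau_0)\,dy$ converges to $\int_\O\phi(y,0)\,u_0(y)\,dy$, and each of the three time integrals over $[\tau_0,t]$ extends to $[0,t]$ by boundedness of its integrand and dominated convergence. The resulting identity is exactly (\ref{weak soln def}), so $u^*$ is a weak solution. The main technical obstacle is organizing this double limit cleanly: the restriction $\tau_0>0$ is forced by the positive-time regularity statement (\ref{cont. deri. fixed}), and removing it afterwards relies essentially on the uniform-in-$j$ a priori bound supplied by Lemma \ref{Lemma, bound for approx. sol'n}, without which one could not apply dominated convergence on the fixed time interval $[0,t]$.
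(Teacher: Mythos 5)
Your overall strategy is exactly the paper's: approximate $u^*$ by $w_j$ solving (\ref{approx. pb. with fixed nonlin. term}), derive the integral identity for $w_j$ on $[\tau_0,t]$, let $j\to\infty$ using Lemmas \ref{Lemma, bound for approx. sol'n} and \ref{Lemma, conv. of approx. sol'n} with dominated convergence, and then send $\tau_0\to 0^+$. The limit passages you describe are all handled correctly and match the paper.

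The one genuine gap is in the very first step, where you apply Green's second identity directly on $\O\times[\tau_0,t]$ and assert it is ``legitimate because $w_j\in C^{2,1}(\O\times(0,T^*))$ with $\nabla w_j\in C(\ol{\O}\times[\tau_0,T^*))$.'' Continuity of $\nabla w_j$ up to $\p\O$ is not by itself sufficient to integrate by parts on all of $\O$: one also needs $\Delta w_j$ (equivalently $(w_j)_\tau$) to be integrable near $\p\O$, and $C^1$ regularity up to the boundary does not imply that (think of a one-dimensional function that is $C^1$ up to an endpoint but whose second derivative fails to be $L^1$ there). The paper sidesteps this precisely by first integrating by parts on the interior domains $\O_k=\{x\in\O:\mathrm{dist}(x,\p\O)>1/k\}$, where $w_j$ is smooth, and then letting $k\to\infty$; the limit of the boundary flux terms is handled by Lemma \ref{Lemma, convergence of flux}, and the remaining terms converge because $w_j$, $\nabla w_j$, $\phi$, $\nabla\phi$ are all continuous up to $\ol{\O}$ on $[\tau_0,t]$. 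Importantly, in the resulting identity the singular quantity $\Delta w_j$ never appears integrated over $\O$, so the identity is meaningful even when $\Delta w_j\notin L^1(\O)$. Inserting this $\O_k$ (or any other exhaustion) step, invoking Lemma \ref{Lemma, convergence of flux}, would complete your argument; as written, the direct appeal to Green's identity on $\O$ is not justified by the regularity you cite.
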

\begin{proof}
Firstly, we choose the same sequence of domains $\{\O_{k}\}_{k\geq 1}$ as defined in (\ref{def of approxiamted domain}), i.e. $$\O_{k}=\{x\in\O: \text{dist}\,(x,\p\O)>1/k\}.$$ 
Then for any $0<\tau_0<t<T^{*},\,k\geq 1,\, j\geq 1$ and $\phi\in C^{2}\big(\overline{\O}\times[0,t]\big)$, we have
\[\int_{\tau_0}^{t}\int_{\O_k}(w_j)_t(y,\tau)\,\phi(y,\tau)\,dy\,d\tau=\int_{\tau_0}^{t}\int_{\O_k}\Delta w_j(y,\tau)\,\phi(y,\tau)\,dy\,d\tau. \]
Using integration by parts, 
\begin{align*}
&\int_{\tau_0}^{t}\int_{\O_{k}}(\phi_{\tau}+\Delta\phi)(y,\tau)w_j(y,\tau)\,dy\,d\tau = \int_{\O_{k}}\phi(y,t)w_j(y,t)-\phi(y,\tau_0)w_j(y,\tau_0)\,dy  \\
& -\int_{\tau_0}^{t}\int_{\p\O_{k}}\phi(y,\tau)\,D w_j(y,\tau)\cdot\ora{n_{k}}(y)\,dS_{y}\,d\tau+
\int_{\tau_0}^{t}\int_{\p\O_{k}}w_j(y,\tau)\,D\phi(y,\tau)\cdot \ora{n_{k}}(y)dS_{y}\,d\tau,
\end{align*}
where $\ora{n_{k}}$ denotes the exterior unit normal vector with respect to $\p\O_{k}$.

Sending $k\rightarrow\infty$, by (\ref{cont. deri. fixed}) and Lemma \ref{Lemma, convergence of flux}, we obtain 
\begin{align*}
&\int_{\tau_0}^{t}\int_{\O}(\phi_{\tau}+\Delta\phi)(y,\tau)\,w_j(y,\tau)\,dy\,d\tau = \int_{\O}\phi(y,t)\,w_j(y,t)-\phi(y,\tau_0)\,w_j(y,\tau_0)\,dy  \\
& -\int_{\tau_0}^{t}\int_{\p\O}\phi(y,\tau)\,D w_j(y,\tau)\cdot\ora{n}(y)\,dS_{y}\,d\tau+
\int_{\tau_0}^{t}\int_{\p\O}w_j(y,\tau)\,D\phi(y,\tau)\cdot \ora{n}(y)dS_{y}\,d\tau,
\end{align*}
Noticing $D w_j(y,\tau)\cdot\ora{n}(y)=\frac{\p w_j}{\p n}(y,\tau)=\eta_{j}(y)\,(u^{*})^{q}(y,\tau)$ on $\p\O$, we get
\begin{align*}
&\int_{\tau_0}^{t}\int_{\O}(\phi_{\tau}+\Delta\phi)(y,\tau)\,w_j(y,\tau)\,dy\,d\tau = \int_{\O}\phi(y,t)\,w_j(y,t)-\phi(y,\tau_0)\,w_j(y,\tau_0)\,dy  \\
& -\int_{\tau_0}^{t}\int_{\p\O}\phi(y,\tau)\,\eta_{j}(y)\,(u^{*})^{q}(y,\tau)\,dS_{y}\,d\tau+
\int_{\tau_0}^{t}\int_{\p\O}w_j(y,\tau)\,D\phi(y,\tau)\cdot \ora{n}(y)dS_{y}\,d\tau,
\end{align*}
Taking $j\rightarrow\infty$, then it follows from Lemma \ref{Lemma, bound for approx. sol'n}, Lemma \ref{Lemma, conv. of approx. sol'n} and the Lebesgue's dominated convergence theorem that
\bes\begin{split}
&\int_{\tau_0}^{t}\int_{\O}(\phi_{\tau}+\Delta\phi)(y,\tau)\,u^{*}(y,\tau)\,dy\,d\tau =  \int_{\O}\phi(y,t)\,u^*(y,t)-\phi(y,\tau_0)\,u^{*}(y,\tau_0)\,dy \\
& -\int_{\tau_0}^{t}\int_{\Gamma_1}\phi(y,\tau)\, (u^{*})^{q}(y,\tau)\,dS_{y}\,d\tau+\int_{\tau_0}^{t}\int_{\p\O}u^{*}(y,\tau)\,D\phi(y,\tau)\cdot \ora{n}(y)\,dS_{y}\,d\tau.
\end{split}\ees
Finally by sending $\tau_0\rightarrow 0$, we get (\ref{weak soln def}). 
\end{proof}

Next by (\ref{weak soln def}) and some standard steps, we are able to attain the Representation formula of $u^{*}$ for inside points.
\begin{theorem}\label{Thm, weak sol'n implies rep. formula}
For the maximal solution $u^{*}$ to (\ref{Prob, Simple Model}), it has the Representation formula for the inside points $(x,t)\in\O\times[0,T^{*})$,
\be\label{Representation formula for inside}
\begin{split}
u^{*}(x,t) = & \int_{\O}\Phi(x-y,t)\,u_0(y)\,dy+\int_{0}^{t}\int_{\Gamma_1}\Phi(x-y,t-\tau)\,(u^{*})^{q}(y,\tau)\,dS_{y}\,d\tau  \\
& +\int_{0}^{t}\int_{\p\O}(D\Phi)(x-y,t-\tau)\cdot \ora{n}(y)\,u^{*}(y,\tau)\,dS_{y}\,d\tau.
\end{split} \ee
\end{theorem}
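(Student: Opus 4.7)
The plan is to derive (\ref{Representation formula for inside}) by plugging a smoothed version of the fundamental solution into the weak formulation (\ref{weak soln def}) from Theorem \ref{Thm, max. sol'n implies weak sol'n}. The only subtlety is that $\Phi(x-y,t-\tau)$ has a singularity at $(y,\tau)=(x,t)$, so it is not directly an admissible $C^{2}$ test function on $\overline{\O}\times[0,t]$. I circumvent this by shifting in time: fix $(x,t)\in\O\times(0,T^{*})$, and for each small $\v>0$ set
\[\phi_{\v}(y,\tau)\triangleq \Phi\bigl(x-y,\,t-\tau+\v\bigr),\qquad (y,\tau)\in\overline{\O}\times[0,t].\]
Since $t-\tau+\v\geq \v>0$ throughout, $\phi_{\v}$ is smooth on $\overline{\O}\times[0,t]$, and the identity $\Phi_{s}=\Delta_{z}\Phi$ gives $\partial_{\tau}\phi_{\v}+\Delta_{y}\phi_{\v}\equiv 0$, so the left-hand side of (\ref{weak soln def}) vanishes.

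Substituting $\phi_{\v}$ into (\ref{weak soln def}) and using $\nabla_{y}\phi_{\v}(y,\tau)=-(D\Phi)(x-y,t-\tau+\v)$, one obtains after rearrangement
\begin{align*}
\int_{\O}\Phi(x-y,\v)\,u^{*}(y,t)\,dy
=&\;\int_{\O}\Phi(x-y,t+\v)\,u_{0}(y)\,dy \\
&+\int_{0}^{t}\!\!\int_{\Gamma_{1}}\Phi(x-y,t-\tau+\v)\,(u^{*})^{q}(y,\tau)\,dS_{y}\,d\tau \\
&+\int_{0}^{t}\!\!\int_{\p\O}(D\Phi)(x-y,t-\tau+\v)\cdot\ora{n}(y)\,u^{*}(y,\tau)\,dS_{y}\,d\tau.
\end{align*}

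The final step is to pass $\v\rightarrow 0^{+}$. On the left, since $x$ lies in the interior of $\O$ and $u^{*}(\cdot,t)\in C(\overline{\O})$, the standard approximation-to-identity property of the Gauss kernel yields $\int_{\O}\Phi(x-y,\v)u^{*}(y,t)\,dy\rightarrow u^{*}(x,t)$. For the first term on the right, dominated convergence applies since $\Phi(x-y,t+\v)\leq(4\pi t)^{-n/2}$ uniformly in small $\v$. The crucial point for the two boundary integrals is that $d\triangleq\mathrm{dist}(x,\p\O)>0$, which gives a uniform bound
\[\Phi(x-y,s)+\bigl|(D\Phi)(x-y,s)\bigr|\leq C(n,d),\qquad\forall\,y\in\p\O,\;s>0,\]
because the exponential factor $e^{-d^{2}/(4s)}$ swamps the polynomial blow-up in $s$. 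Combined with the continuity of $u^{*}$ up to the boundary, dominated convergence on the finite-measure set $[0,t]\times\p\O$ delivers the limits and produces exactly (\ref{Representation formula for inside}).

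The main obstacle is purely technical: the need to regularise the singular choice of test function via the $\v$-shift and to verify the approximation-to-identity limit on the left-hand side. Beyond that, the argument is routine once Theorem \ref{Thm, max. sol'n implies weak sol'n} is in hand, since the strict interior location of $x$ automatically keeps the boundary integrands far from the singularity of $\Phi$ and supplies the dominating functions needed for the passage to the limit.
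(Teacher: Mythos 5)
Your proposal is correct and follows essentially the same route as the paper: the paper's proof also takes the time-shifted kernel $\phi^{x,\v}(y,\tau)=\Phi(x-y,t+\v-\tau)$ as the test function in (\ref{weak soln def}), uses $\partial_\tau\phi^{x,\v}+\Delta_y\phi^{x,\v}=0$, and passes to the limit $\v\rightarrow 0^{+}$ by dominated convergence together with the approximation-to-identity property at the interior point $x$. The only detail you omit is the trivial extension to $t=0$, where both sides of (\ref{Representation formula for inside}) tend to $u_0(x)$.
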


\begin{proof}
For any $x\in\O$, $t\in(0,T^{*})$ and $\v>0$, we define $\phi^{x}:\ol{\O}\times[0,t)\rightarrow\m{R}$ by
\[\phi^{x}(y,\tau)=\Phi(x-y, t-\tau)=\frac{1}{(4\pi)^{n/2}}\frac{1}{(t-\tau)^{n/2}}e^{-\frac{|x-y|^2}{4(t-\tau)}}\]
and define $\phi^{x,\v}:\ol{\O}\times[0,t]\rightarrow\m{R}$ by
\[\phi^{x,\v}(y,\tau)=\Phi(x-y, t+\eps-\tau)=\frac{1}{(4\pi)^{n/2}}\frac{1}{(t+\eps-\tau)^{n/2}}e^{-\frac{|x-y|^2}{4(t+\eps-\tau)}}.\]
From these, one can see that $\phi^{x,\v}$ is smooth in its domain and 
$$(\phi^{x,\eps})_{\tau}(y,\tau)+\Delta_{y} (\phi^{x,\eps})(y,\tau)=0, \quad\forall\,(y,\tau)\in \ol{\O}\times[0,t].$$ Then we apply (\ref{weak soln def}) with $\phi=\phi^{x,\v}$ to attain
\begin{eqnarray*}
\int_{\Omega}\phi^{x,\eps}(y,t)\,u^{*}(y,t)\,dy &=&
\int_{\Omega}\phi^{x,\eps}(y,0)\,u_0(y)\,dy + \int_{0}^{t}\int_{\Gamma_1}\phi^{x,\eps}(y,\tau)\, (u^{*})^{q}(y,\tau)\,dS_{y}\,d\tau \\
&& -\int_{0}^{t}\int_{\p\Omega}\frac{\p \phi^{x,\eps}}{\p n}(y,\tau)\,u^{*}(y,\tau)\,dS_{y}\,d\tau.
\end{eqnarray*}
Sending $\v\rightarrow 0$, it follows from the Lebesgue's dominated convergence theorem that 
\begin{align*}
u^{*}(x,t) =& \int_{\O}\phi^{x}(y,0)\,u_0(y)\,dy+ \int_{0}^{t}\int_{\Gamma_1}\phi^{x}(y,\tau)\,(u^{*})^{q}(y,\tau)\,dS_{y}\,d\tau \\
&  -\int_{0}^{t}\int_{\p\O}\frac{\p\phi^{x}}{\p n}(y,\tau)\,u^{*}(y,\tau)\,dS_{y}\,d\tau \\
=& \int_{\O}\Phi(x-y,t)\,u_0(y)\,dy+\int_{0}^{t}\int_{\Gamma_1}\Phi(x-y,t-\tau)\,(u^{*})^{q}(y,\tau)\,dS_{y}\,d\tau  \\
& +\int_{0}^{t}\int_{\p\O}(D\Phi)(x-y,t-\tau)\cdot \ora{n}(y)\,u^{*}(y,\tau)\,dS_{y}\,d\tau.
\end{align*}
The last equality is because \[\frac{\p\phi^{x}}{\p n}(y,\tau)\triangleq D_{y}\big[\phi^{x}(y,\tau)\big]\cdot\ora{n}(y)=-(D\Phi)(x-y,t-\tau)\cdot \ora{n}(y).\]

Now we have proved (\ref{Representation formula for inside}) for $(x,t)\in\O\times(0,T^{*})$, next it is obvious to see that when $t\rightarrow 0$, both sides of (\ref{Representation formula for inside}) tend to $u_{0}(x)$. Thus (\ref{Representation formula for inside}) is also true for $(x,0)$, where $x\in\O$.
\end{proof}

Theorem \ref{Thm, weak sol'n implies rep. formula} only gives the formula for the inside points, but we still need the formula for the boundary points $(x,t)\in\p\O\times[0,T^{*})$ . In order to get that, we combine Theorem \ref{Thm, weak sol'n implies rep. formula} and Corollary \ref{Cor, Jump Relation for vary normal}. 

\begin{corollary}\label{Cor, rep. formula for bdry.}
For the maximal solution $u^{*}$ to (\ref{Prob, Simple Model}), it has the Representation formula for the boundary points $(x,t)\in\p\O\times[0,T^{*})$,
\be\label{Representation formula for bdry.}
\begin{split}
u^{*}(x,t) = & 2\int_{\O}\Phi(x-y,t)\,u_0(y)\,dy+2\int_{0}^{t}\int_{\Gamma_1}\Phi(x-y,t-\tau)\,(u^{*})^{q}(y,\tau)\,dS_{y}\,d\tau  \\
& +2\int_{0}^{t}\int_{\p\O}(D\Phi)(x-y,t-\tau)\cdot \ora{n}(y)\,u^{*}(y,\tau)\,dS_{y}\,d\tau.
\end{split} \ee
\end{corollary}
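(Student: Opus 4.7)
The plan is to deduce the boundary formula from the interior formula (\ref{Representation formula for inside}) by approaching a boundary point $x\in\p\O$ from inside along the inward normal. Specifically, for $h>0$ small, set $x_{h}=x-h\ora{n}(x)\in\O$ and apply Theorem \ref{Thm, weak sol'n implies rep. formula} at $(x_{h},t)$, then send $h\to 0^{+}$. The limit of the left-hand side is $u^{*}(x,t)$ by continuity of $u^{*}$ on $\ol{\O}\times[0,T^{*})$, so the whole task is to compute the limits of the three integrals on the right.

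For the initial-data volume integral $\int_{\O}\Phi(x_{h}-y,t)u_{0}(y)\,dy$, since $t>0$ the kernel $\Phi(\cdot,t)$ is smooth and bounded, and Lebesgue dominated convergence gives the expected limit $\int_{\O}\Phi(x-y,t)u_{0}(y)\,dy$. For the $\Gamma_{1}$ single-layer term $\int_{0}^{t}\int_{\Gamma_{1}}\Phi(x_{h}-y,t-\tau)(u^{*})^{q}(y,\tau)\,dS_{y}\,d\tau$, the heat single-layer potential is continuous across $\p\O$ (this is the ``no-jump'' statement for single-layer heat potentials, and the density $(u^{*})^{q}$ is bounded and continuous on $\Gamma_{1}\times[0,t]$ because of the $C(\ol{\O}\times[0,T^{*}))$ regularity of $u^{*}$), so the limit is simply the same expression with $x_{h}$ replaced by $x$. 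The only term that produces a jump is the double-layer-type term $\int_{0}^{t}\int_{\p\O}(D\Phi)(x_{h}-y,t-\tau)\cdot\ora{n}(y)u^{*}(y,\tau)\,dS_{y}\,d\tau$, which is precisely of the form handled by the generalized Jump Relation established in Appendix \ref{Sec, Jump Relation}; applying Corollary \ref{Cor, Jump Relation for vary normal} with density $u^{*}$ yields
\[
\lim_{h\to 0^{+}}\int_{0}^{t}\!\int_{\p\O}(D\Phi)(x_{h}-y,t-\tau)\cdot\ora{n}(y)\,u^{*}(y,\tau)\,dS_{y}\,d\tau=\tfrac{1}{2}u^{*}(x,t)+\int_{0}^{t}\!\int_{\p\O}(D\Phi)(x-y,t-\tau)\cdot\ora{n}(y)\,u^{*}(y,\tau)\,dS_{y}\,d\tau.
\]

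Combining the three limits gives $u^{*}(x,t)=A+B+C+\tfrac12 u^{*}(x,t)$, where $A,B,C$ stand for the three boundary integrals on the right of (\ref{Representation formula for bdry.}) divided by $2$; rearranging and multiplying by $2$ yields the claimed identity. The one nontrivial step is the application of the Jump Relation, and the main obstacle is verifying its hypotheses for the density $u^{*}$: continuity of $u^{*}$ on $\p\O\times(0,T^{*})$ is already known, so one only needs to check the sign convention (the inward-pointing limit from $\O$ produces $+\tfrac{1}{2}u^{*}(x,t)$, which is consistent with the local calculation $(x_{h}-y)\cdot\ora{n}(y)\approx -h$ near $y=x$) and the integrability of the singular kernel near the diagonal, both of which are precisely what Corollary \ref{Cor, Jump Relation for vary normal} is designed to handle. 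Finally, the formula is valid at $t=0$ and at points where $u^{*}$ vanishes by continuity, and for $x\in\wt{\Gamma}$ the requirement (\ref{interface bdry deri. for simple model}) of Definition \ref{Def, soln to simple model} is consistent with this formula, so no separate treatment of the interface points is needed.
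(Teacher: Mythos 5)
Your proposal is correct and follows essentially the same route as the paper: apply the interior formula of Theorem \ref{Thm, weak sol'n implies rep. formula} at $x_h=x-h\ora{n}(x)$, let $h\to 0^{+}$, invoke Corollary \ref{Cor, Jump Relation for vary normal} to pick up the $\tfrac12 u^{*}(x,t)$ jump from the double-layer term, and rearrange. The only detail worth tightening is the case $t=0$: it is not plain continuity but the fact that for $x\in\p\O$ one has $\lim_{t\to 0^{+}}\int_{\O}\Phi(x-y,t)\,u_0(y)\,dy=\tfrac12 u_0(x)$ (half the mass, since $x$ sits on a $C^2$ boundary), which is exactly what makes the factor $2$ in the formula reproduce $u_0(x)$.
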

\begin{proof}
We fix any point $(x,t)\in \p\O\times(0,T^{*})$ and write $x_h=x-h\ora{n}(x)$ for $h>0$. As shown in the proof of Corollary \ref{Cor, Jump Relation for vary normal}, when $h$ is sufficiently small, $x_h\in \O$ for any $x\in\p\O$. Consequently we can apply Theorem \ref{Thm, weak sol'n implies rep. formula} to conclude that
\bes \begin{split}
u^{*}(x_h,t) = & \int_{\O}\Phi(x_h-y,t)\,u_0(y)\,dy+\int_{0}^{t}\int_{\Gamma_1}\Phi(x_h-y,t-\tau)\,(u^{*})^{q}(y,\tau)\,dS_{y}\,d\tau  \\
& +\int_{0}^{t}\int_{\p\O}(D\Phi)(x_h-y,t-\tau)\cdot \ora{n}(y)\,u^{*}(y,\tau)\,dS_{y}\,d\tau.
\end{split} \ees
Taking $h\rightarrow 0^{+}$, then it follows from Corollary \ref{Cor, Jump Relation for vary normal} that
\bes \begin{split}
u^{*}(x,t) = & \int_{\O}\Phi(x-y,t)\,u_0(y)\,dy+\int_{0}^{t}\int_{\Gamma_1}\Phi(x-y,t-\tau)\,(u^{*})^{q}(y,\tau)\,dS_{y}\,d\tau  \\
& +\int_{0}^{t}\int_{\p\O}(D\Phi)(x-y,t-\tau)\cdot \ora{n}(y)\,u^{*}(y,\tau)\,dS_{y}\,d\tau+\frac{1}{2}\,u^{*}(x,t),
\end{split} \ees
which implies (\ref{Representation formula for bdry.}). 

Now we have proved (\ref{Representation formula for bdry.}) for $(x,t)\in\p\O\times(0,T^{*})$. Next since we assume $\p\O\in C^{2}$, then for any $x\in\p\O$, 
\[\lim_{t\rightarrow 0^{+}}\int_{\O}\Phi(x-y,t)\,u_0(y)\,dy=\frac{1}{2}\,u_0(x).\]
As a result, when $t\rightarrow 0$, both sides of (\ref{Representation formula for bdry.}) tend to $u_{0}(x)$. Thus (\ref{Representation formula for bdry.}) is also true for $(x,0)$, where $x\in\p\O$.

\end{proof}

\section{Numerical Simulation}
\label{Sec, numerical result}
We have seen from the previous sections that as $|\Gamma_1|\rightarrow 0$, the upper bound of $T^{*}$ given in Theorem \ref{Thm, upper bound of blow-up time} is in the order $|\Gamma_1|^{-1}$ while the lower bound given in Theorem \ref{Thm, lower bound of blow-up time} is in the order $\big[\ln \big(|\Gamma_1|^{-1}\big)\big]^{2/(n+2)}$. The natural question is which order is more accurate? In this section, we try to numerically examine the order $\alpha$:
$T^{*}\sim C\, |\Gamma_1|^{-\alpha}.$

However, it has some difficulties to perform the simulation, since the blow-up of the solution destroys the accuracy of the numerical schemes. As a result, to ensure the accuracy, the numerical simulation should stop at some time $T_0$ before $u^{*}$ becomes large, say $\max\limits_{\ol{\O}\times[0,T_0]}u^{*}= 10$. If we can show that $T_0$ is already in the order of $|\Gamma_1|^{-1}$ when $|\Gamma_1|\rightarrow 0$, then one can expect the order for $T^{*}$ should be $|\Gamma_1|^{-1}$, since it is the same as the order of the upper bound. 

In this section, we define $M_1, m_1:[0,T^{*})\rightarrow\m{R}$ by
\[M_1(t)=\max_{x\in\ol{\O}}u^{*}(x,t)\]
and 
\[m_1(t)=\min_{x\in\ol{\O}}u^{*}(x,t).\]
The following are the simulation results by applying the Finite Difference Method. 
\begin{itemize}
\item 2 Dimension: Unit square, space step size h=1/40, time step size 
$k=0.2h^2$, the length $|\Gamma_1|$ from 20/40 to 3/40;
\item 3 Dimension: Unit cubic, space step size h=1/10, time step size 
$k=0.1h^2$, the area $|\Gamma_1|$ from 49/100 to 9/100.
\end{itemize}
Two Dimensional Cases:
\begin{itemize}
\item Let $q=2$ and initial data $u_0(x)\equiv 0.05$. Table \ref{table 1} denotes the first time $T_0$ for 
$M_1$ to reach 10.
\begin{table}[!ht]
	\begin{center}
		\begin{tabular}{|c|c|c|c|c|} \hline
			$|\Gamma_1|$ & 20/40 & 10/40 & 5/40 & 3/40 \\ \hline
			$T_0$ & 35.4 & 72.8 & 149.6 & 253.6 \\ \hline			
			Order & & {1.040} & 
				{1.039} & {1.033}\\ \hline		
			$m_1(T_0)$ & 1.17 & 1.57 & 2.32 & 3.21 \\ \hline
		\end{tabular}	 
	\end{center} 
	\caption{2 Dimension, $q=2$}
	\label{table 1}
\end{table}

\item Let $q=3$ and initial data $u_0(x)\equiv 0.05$. Table \ref{table 2} denotes the first time $T_0$ for 
$M_1$ to reach 10.
\begin{table}[!ht]
	\begin{center}
		\begin{tabular}{|c|c|c|c|c|} \hline
			$|\Gamma_1|$ & 20/40 & 10/40 & 5/40 & 3/40 \\ \hline
			$T_0$ & 394.6 & 791.8 & 1588.5 & 2652.5 \\ \hline		
			Order & & {1.005} & 
				{1.005} & {1.004}\\ \hline		
			$m_1(T_0)$ & 0.81 & 0.95 & 1.16 & 1.37 \\ \hline
		\end{tabular}	 
	\end{center} 
	\caption{2 Dimension, $q=3$}
	\label{table 2}
\end{table}
\end{itemize}
Three Dimensional Cases:
\begin{itemize}
\item Let $q=2$ and initial data $u_0(x)\equiv 0.05$. Table \ref{table 3} denotes the first time $T_0$ for 
$M_1$ to reach 10.
\begin{table}[!ht]
	\begin{center}
		\begin{tabular}{|c|c|c|c|c|} \hline
			$|\Gamma_1|$ & 49/100 & 25/100 & 16/100 & 9/100 \\ \hline
			$T_0$ & 36.3 & 72.6 & 114.7 & 206.9 \\ \hline			
			Order & & {1.028} & 
				{1.024} & {1.026}\\ \hline		
			$m_1(T_0)$ & 1.23 & 1.51 & 1.73 & 2.17 \\ \hline
		\end{tabular}	 
	\end{center} 
	\caption{3 Dimension, $q=2$}
	\label{table 3}
\end{table}

\item Let $q=3$ and initial data $u_0(x)\equiv 0.05$.
Table \ref{table 4} denotes the first time $T_0$ for 
$M_1$ to reach 10.
\begin{table}[!ht]
	\begin{center}
		\begin{tabular}{|c|c|c|c|c|} \hline
			$|\Gamma_1|$ & 49/100 & 25/100 & 16/100 & 9/100 \\ 		\hline
			$T_0$ & 403.0 & 791.6 & 1238.4 & 2205.4 \\ \hline		
			Order & & {1.003} & 
				{1.003} & {1.003}\\ \hline		
			$m_1(T_0)$ & 0.84 & 0.93 & 1.00 & 1.13 \\ \hline
		\end{tabular}	 
	\end{center} 
	\caption{3 Dimension, $q=3$}
	\label{table 4}
\end{table}

\end{itemize}

From any of these tables, the order of the blow-up time is about 1. Thus, we conjecture that the blow-up time $T^{*}$ should be comparable to $|\Gamma_1|^{-1}$ as $|\Gamma_1|\rightarrow 0$.
\bigskip
\bigskip

\begin{appendix}
\section*{Appendices}

\section{Jump Relation}
\label{Sec, Jump Relation}
One of the key tools to show the existence of the solution to the parabolic equations with Neumann boundary conditions is the Jump Relation of the Single-layer Potentials (See \cite{Friedman}, Sec. 2, Chap. 5). For the convenience of the readers, we restate it here.
\begin{theorem}[Jump Relation]\label{Thm, Jump Relation, Original}
Let $g\in C\big(\p\O\times[0,T]\big)$, then for any $(x,t)\in\p\O\times(0,T]$, 
\begin{align}\label{Jump Relation, Original}
&\lim_{h\rightarrow 0^{+}}\int_{0}^{t}\int_{\p\O}(D\Phi)\big(x_h-y,t-\tau\big)\cdot\ora{n}(x)\,g(y,\tau)\,dS_{y}\,d\tau \nonumber \\
=&\int_{0}^{t}\int_{\p\O}(D\Phi)(x-y,t-\tau)\cdot\ora{n}(x)\,g(y,\tau)\,dS_{y}\,d\tau+\frac{1}{2}\,g(x,t),
\end{align} 
where $\Phi$ is the fundamental solution of the heat equation, $\ora{n}(x)$ denotes the exterior unit normal vector at $x$ with respect to $\p\O$ and $x_h\triangleq x-h\ora{n}(x)$.
\end{theorem}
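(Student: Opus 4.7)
The plan is to adapt the classical argument for the jump relation of single-layer potentials (which for the heat equation originates in Friedman's book). The essential algebraic step is the decomposition
\[
g(y,\tau)=g(x,t)+\bigl[g(y,\tau)-g(x,t)\bigr]
\]
inside the double integral: the constant piece $g(x,t)$ is responsible for the jump $\tfrac{1}{2}g(x,t)$, while the remainder piece $g(y,\tau)-g(x,t)$ passes continuously through the limit $h\to 0^{+}$.

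For the constant piece, use $(D\Phi)(z,s)=-\frac{z}{2s}\Phi(z,s)$ together with $(x_h-y)\cdot\ora{n}(x)=(x-y)\cdot\ora{n}(x)-h$ to split
\[
J(h)\triangleq\int_{0}^{t}\!\!\int_{\p\O}(D\Phi)(x_h-y,t-\tau)\cdot\ora{n}(x)\,dS_{y}\,d\tau=J_{1}(h)+J_{2}(h),
\]
where $J_{1}(h)$ contains the factor $(x-y)\cdot\ora{n}(x)$ and $J_{2}(h)$ contains the factor $h$. Since $\p\O\in C^{2}$, one has $|(x-y)\cdot\ora{n}(x)|\leq C|x-y|^{2}$ for $y\in\p\O$ near $x$; combined with the elementary bound $r^{2}e^{-r^{2}/(8s)}\leq Cs$ and Lemma \ref{Lemma, bound for surface integral}, this yields an $h$-uniformly integrable majorant, so dominated convergence sends $J_{1}(h)$ to $\int_{0}^{t}\int_{\p\O}(D\Phi)(x-y,t-\tau)\cdot\ora{n}(x)\,dS_{y}\,d\tau$. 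For $J_{2}(h)$, locally straighten $\p\O$ near $x$ via a $C^{2}$ diffeomorphism onto a neighbourhood of the origin in $T_{x}(\p\O)$; integrating the resulting Gaussian in the $(n-1)$ tangential variables gives $1$, and the substitution $\sigma=h^{2}/\bigl(4(t-\tau)\bigr)$ reduces the leading term to
\[
\frac{1}{2\sqrt{\pi}}\int_{h^{2}/(4t)}^{\infty}\sigma^{-1/2}e^{-\sigma}\,d\sigma\;\xrightarrow[h\to 0^{+}]{}\;\frac{1}{2\sqrt{\pi}}\,\Gamma(1/2)=\frac{1}{2}.
\]

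For the remainder piece involving $g(y,\tau)-g(x,t)$, establish an $h$-uniform majorant
\[
\bigl|(D\Phi)(x_h-y,t-\tau)\cdot\ora{n}(x)\bigr|\;\leq\;\frac{C}{(t-\tau)^{n/2}}\exp\!\left(-\frac{|x-y|^{2}}{C'(t-\tau)}\right),
\]
obtained by combining $|(x_h-y)\cdot\ora{n}(x)|\leq h+C|x-y|^{2}$ with $r^{2}e^{-r^{2}/(8s)}\leq Cs$; Lemma \ref{Lemma, bound for surface integral} then gives integrability in $(y,\tau)\in\p\O\times(0,t)$. Since $g$ is continuous, $g(y,\tau)-g(x,t)$ is small on a neighbourhood of $(x,t)$, so dominated convergence yields the analogous limit with $x_h$ replaced by $x$. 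Adding the two contributions recovers (\ref{Jump Relation, Original}).

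The main obstacle will be the asymptotic analysis of $J_{2}(h)$: one has to check that the curvature correction introduced by flattening $\p\O$, together with the term $(x-y)\cdot\ora{n}(x)$ buried in the Gaussian exponent of $\Phi(x_h-y,t-\tau)$, contribute only $o(1)$ as $h\to 0^{+}$. This is the single place where the $C^{2}$ smoothness of $\p\O$ is used in an essential way; everything else reduces, once the correct Gaussian majorants are in hand, to routine applications of dominated convergence and Lemma \ref{Lemma, bound for surface integral}.
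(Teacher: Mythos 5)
Your overall architecture is the classical one and is essentially sound: the paper itself does not prove this theorem but cites it from Friedman, and the paper's own proof of the adapted variant (Theorem \ref{Thm, Jump Relation, Adapted}) uses exactly your strategy for the jump term $J_2(h)$ — flatten the boundary, integrate out the tangential Gaussian, substitute $\sigma=h^2/(4(t-\tau))$ to produce $\frac{1}{2\sqrt{\pi}}\Gamma(1/2)=\frac12$, and control the curvature corrections via $|y_n|\le C|\t{y}|^2$. Your treatment of $J_1(h)$ via Lemma \ref{Lemma, geom. prop. of C^2 bdry} and dominated convergence is also correct and mirrors the paper's proof of Corollary \ref{Cor, Jump Relation for vary normal}.

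The genuine gap is in the remainder piece. The claimed $h$-uniform majorant
$\big|(D\Phi)(x_h-y,t-\tau)\cdot\ora{n}(x)\big|\le C(t-\tau)^{-n/2}\exp\big(-|x-y|^2/(C'(t-\tau))\big)$
is false, and it cannot be repaired: if such a bound held, then (since by Lemma \ref{Lemma, bound for surface integral} it is integrable over $\p\O\times(0,t)$) dominated convergence applied to the \emph{constant} piece $g\equiv g(x,t)$ would give $\lim_{h\to0^+}J(h)=\int_0^t\int_{\p\O}(D\Phi)(x-y,t-\tau)\cdot\ora{n}(x)\,dS_y\,d\tau$ with no jump, contradicting your own (correct) computation of $J_2$. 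Concretely, the term $h/(x_h-y)$-part of the kernel only satisfies $\frac{h}{(t-\tau)^{n/2+1}}e^{-|x_h-y|^2/(4(t-\tau))}\le C(t-\tau)^{-(n+1)/2}e^{-|x-y|^2/(32(t-\tau))}$ (using $h\le|x_h-y|$ and $|x-y|\le 2|x_h-y|$), whose surface integral is of order $(t-\tau)^{-1}$ and is not integrable in $\tau$ near $t$; this part of the kernel converges to a point mass at $(x,t)$ and admits no integrable uniform majorant. The correct argument for the remainder $g(y,\tau)-g(x,t)$ is the approximate-identity splitting you only gesture at: given $\v>0$, choose a space-time neighbourhood of $(x,t)$ on which $|g(y,\tau)-g(x,t)|<\v$ and use that the \emph{total mass} $\int_0^t\int_{\p\O}\frac{h}{2(t-\tau)}\Phi(x_h-y,t-\tau)\,dS_y\,d\tau$ is bounded uniformly in $h$ (the same computation that produced the $\frac12$), so this region contributes at most $C\v$; on the complement the kernel is uniformly dominated and dominated convergence applies. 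As written, your justification of the remainder step rests on a bound that would disprove the theorem, so this step must be rewritten.
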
 

In this section, we discuss several variants of Theorem \ref{Thm, Jump Relation, Original}. These variants will be mainly applied to show the existence of the solution to (\ref{Linear Prob, Simple Model}) and (\ref{Nonlinear Prob, Simple Model}). We also have used them in some other places, for example, in the proofs of Lemma \ref{Lemma, conv. of approx. sol'n} and Corollary \ref{Cor, rep. formula for bdry.}. 

The first variant is the following Corollary \ref{Cor, Jump Relation for vary normal}, in which the normal direction in the integrand is not fixed to be $\ora{n}(x)$ as in Theorem \ref{Thm, Jump Relation, Original}.

\begin{corollary}\label{Cor, Jump Relation for vary normal}
Let $g\in C\big(\p\O\times[0,T]\big)$, then for any $x\in\p\O$, $t\in(0,T]$,
\begin{align}\label{Jump Relation for vary normal}
&\lim_{h\rightarrow 0^{+}}\int_{0}^{t}\int_{\p\O}(D\Phi)\big(x_h-y,t-\tau\big)\cdot\ora{n}(y)\,g(y,\tau)\,dS_{y}\,d\tau \nonumber \\
=&\int_{0}^{t}\int_{\p\O}(D\Phi)(x-y,t-\tau)\cdot\ora{n}(y)\,g(y,\tau)\,dS_{y}\,d\tau+\frac{1}{2}\,g(x,t).
\end{align} 
\end{corollary}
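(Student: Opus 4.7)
The plan is to reduce (\ref{Jump Relation for vary normal}) to the original jump relation (\ref{Jump Relation, Original}) via the decomposition
\[
\ora{n}(y) = \ora{n}(x) + \big[\ora{n}(y) - \ora{n}(x)\big].
\]
Substituting this on the left-hand side of (\ref{Jump Relation for vary normal}) splits the double integral into two pieces. Theorem \ref{Thm, Jump Relation, Original} applies directly to the piece containing the constant vector $\ora{n}(x)$ (with $g$ unchanged), producing the jump $\frac{1}{2}g(x,t)$ plus the corresponding surface integral evaluated at $h=0$. If I can show that the remaining correction piece is continuous in $h$ at $h=0^+$, then re-combining $\ora{n}(x) + [\ora{n}(y) - \ora{n}(x)] = \ora{n}(y)$ inside the resulting $h=0$ integral yields exactly (\ref{Jump Relation for vary normal}).

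So the task reduces to proving
\[
\lim_{h\to 0^+}\int_{0}^{t}\!\int_{\p\O} (D\Phi)(x_h - y, t-\tau)\cdot \big[\ora{n}(y) - \ora{n}(x)\big]\, g(y,\tau)\,dS_y\,d\tau = \int_{0}^{t}\!\int_{\p\O} (D\Phi)(x - y, t-\tau)\cdot \big[\ora{n}(y) - \ora{n}(x)\big]\, g(y,\tau)\,dS_y\,d\tau,
\]
which I would handle by the dominated convergence theorem. Pointwise convergence of the integrand for $y \neq x$ is immediate from $x_h \to x$. The decisive input for integrability is the $C^2$ regularity of $\p\O$, which yields
\[
\big|\ora{n}(y) - \ora{n}(x)\big| \leq C|x-y|,
\]
and this extra factor of $|x-y|$ absorbs one power of $|x_h - y|$ in the gradient kernel $|D\Phi(z,s)| \leq C|z|\, s^{-n/2-1}\, e^{-|z|^2/(4s)}$.

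The main obstacle is constructing a dominator that is integrable uniformly in all small $h > 0$. I would split the boundary domain into $\{y \in \p\O : |x-y| \geq 2h\}$ and $\{y \in \p\O : |x-y| < 2h\}$. On the first region one has $|x-y| \leq 2|x_h - y|$, so after the elementary bound $r e^{-r^2/8} \leq C$ with $r = |x_h - y|/\sqrt{t-\tau}$, the integrand is dominated by $C(t-\tau)^{-n/2}\, e^{-c|x_h-y|^2/(t-\tau)}$. On the second region, the interior ball property of a $C^2$ boundary forces $|x_h - y| \geq c\,h$ whenever $|x-y| < 2h$, hence $|x_h - y| \cdot |x-y| \leq C|x_h - y|^2$ and the same bound $C(t-\tau)^{-n/2}\, e^{-c|x_h-y|^2/(t-\tau)}$ applies. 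A straightforward extension of Lemma \ref{Lemma, bound for surface integral} to nearby interior points $x_h$ then gives
\[
\int_{\p\O} (t-\tau)^{-n/2}\, e^{-c|x_h - y|^2/(t-\tau)}\, dS_y \leq C(t-\tau)^{-1/2},
\]
and $\int_0^t (t-\tau)^{-1/2}\, d\tau < \infty$ furnishes the required $h$-uniform integrable dominator. Dominated convergence then concludes the proof.
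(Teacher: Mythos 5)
Your proposal is correct and follows essentially the same route as the paper: the same decomposition $\ora{n}(y)=\ora{n}(x)+[\ora{n}(y)-\ora{n}(x)]$ reducing to Theorem \ref{Thm, Jump Relation, Original}, the same key bound $|\ora{n}(y)-\ora{n}(x)|\leq C|x-y|$ from $\p\O\in C^2$, and dominated convergence for the correction term. The paper avoids your near/far splitting by noting that the interior ball property gives $|x_h-x|\leq|x_h-y|$ for all $y\in\p\O$, hence $|x-y|\leq 2|x_h-y|$ globally, which immediately yields the single $h$-independent dominator $C(t-\tau)^{-n/2}e^{-|x-y|^2/(32(t-\tau))}$ (your dominator as written still carries $x_h$, so you should make this last substitution explicit before invoking dominated convergence).
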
 
\begin{proof}
Based on Theorem \ref{Thm, Jump Relation, Original}, it suffices to show
\begin{align}\label{small difference in jump relation}
&\lim_{h\rightarrow 0^{+}}
\int_{0}^{t}\int_{\p\O}(D\Phi)\big(x_h-y,t-\tau\big)\cdot \big[\ora{n}(y)-\ora{n}(x)\big]\,g(y,\tau)\,dS_{y}\,d\tau 
\nonumber \\
=&\int_{0}^{t}\int_{\p\O}(D\Phi)(x-y,t-\tau)\cdot \big[\ora{n}(y)-\ora{n}(x)\big]\,g(y,\tau)\,dS_{y}\,d\tau.
\end{align}
We denote $N_{x}$ to be the normal line of $\p\O$ at $x$. Since $\p\O\in C^{2}$, then it satisfies the interior ball condition at $x$, which means that there exists an open ball $B\subset\O$ with center on $N_{x}$ and $\bar{B}\cap\p\O=\{x\}$. Hence, if we denote the radius of $B$ to be $R$, then for any $h\in(0,R)$, $x_h\in N_{x}\cap\O$. In addition, $|x_h-x|\leq |x_h-y|$ for any $y\in\p\O$. Thus, \[|x-y|\leq |x-x_h|+|x_h-y|\leq 2\,|x_h-y|.\]
Now combining the fact $\p\O\in C^{2}$ again, we get
\[\big|\ora{n}(y)-\ora{n}(x)\big|\leq C|y-x|\leq C\,|x_h-y|.\] 
Consequently,
\begin{align*}
&\Big|(D\Phi)\big(x_h-y,t-\tau\big)\cdot \big[\ora{n}(y)-\ora{n}(x)\big]\,g(y,\tau)\Big|\\
\leq\; & \frac{C\,|x_h-y|^2}{(t-\tau)^{n/2+1}}\,\exp\bigg({-\frac{|x_h-y|^2}{4(t-\tau)}}\bigg) \\
\leq\; & \frac{C}{(t-\tau)^{n/2}}\,\exp\bigg({-\frac{|x_h-y|^2}{8(t-\tau)}}\bigg) \\
\leq\; & \frac{C}{(t-\tau)^{n/2}}\,\exp\bigg({-\frac{|x-y|^2}{32(t-\tau)}}\bigg)
\end{align*}
This inequality enables us to apply the Lebesgue's dominated convergence theorem to verify (\ref{small difference in jump relation}).

\end{proof}

Theorem \ref{Thm, Jump Relation, Original} and Corollary \ref{Cor, Jump Relation for vary normal} are still not enough for our problems. For example, in order to show the existence of the solution to (\ref{Linear Prob, Simple Model}), the boundary functions $\beta$ and $g$ are only assumed in $\mathcal{B}_{T}$, not in $C\big(\p\O\times[0,T]\big)$. Thus we need to adapt this formula to the functions in $\mathcal{B}_{T}$. The following Theorem \ref{Thm, Jump Relation, Adapted} is our observation, but the essence of the proof is the same as that of Theorem \ref{Thm, Jump Relation, Original}. 

The following are some notations needed in the proof of Theorem \ref{Thm, Jump Relation, Adapted}. We write ${\bf 0}$ and ${\bf \t{0}}$ to be the origins in $\m{R}^{n}$ and $\m{R}^{n-1}$ respectively and ${\bf e_n}$ denotes the point $(0,0,\cdots,0,1)$ in $\m{R}^{n}$. For any point $y=(y_1,y_2,\cdots,y_n)\in\m{R}^{n}$, we write
\[\tilde{y}=(y_1,y_2,\cdots,y_{n-1}).\] 
For any $r>0$, 
\[B_r\triangleq B({\bf 0},r)\]
means the ball in $\m{R}^{n}$ with radius $r$ and 
\[\wt{B}_r\triangleq B({\bf \t{0}}, r)\]
represents the ball in $\m{R}^{n-1}$ with radius $r$. $\Gamma$ is used to denote the Gamma function, i.e. $\Gamma:(0,\infty)\rightarrow\m{R}$ defined by $$\Gamma(a)=\int_{0}^{\infty}t^{a-1}e^{-t}\,dt.$$
We should be careful to distinguish the notation $\Gamma$ from the notations $\Gamma_1$, $\Gamma_2$ and $\wt{\Gamma}$.

\begin{theorem}\label{Thm, Jump Relation, Adapted}
Let $\varphi\in \mathcal{B}_{T}$, $i=1$ or $2$, then for any $x\in \wt{\Gamma}\triangleq \ol{\Gamma}_1\cap\ol{\Gamma}_2$, $t\in (0,T]$, 
\begin{align}\label{Jump Relation, Adapted}
&\lim_{h\rightarrow 0^{+}}\int_{0}^{t}\int_{\Gamma_i}(D\Phi)\big(x_h-y,t-\tau\big)\cdot\ora{n}(x)\,\varphi(y,\tau)\,dS_{y}\,d\tau \nonumber \\
=&\int_{0}^{t}\int_{\Gamma_i}(D\Phi)(x-y,t-\tau)\cdot\ora{n}(x)\,\varphi(y,\tau)\,dS_{y}\,d\tau+\frac{1}{4}\,\varphi_i(x,t),
\end{align}
where $\varphi_i\triangleq\varphi|_{\Gamma_{i}\times(0,T]}$ represents the extension of $\varphi$ on $\ol{\Gamma}_{i}\times[0,T]$.
\end{theorem}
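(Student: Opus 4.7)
The plan is to replay the proof of Theorem \ref{Thm, Jump Relation, Original} with two new ingredients. Because $x\in\wt{\Gamma}=\p\Gamma_i$ lies on the (relative) boundary of $\Gamma_i$ inside $\p\O$, a small boundary neighborhood of $x$ is occupied only half by $\Gamma_i$; this is the geometric reason the jump is $\tfrac14$ rather than $\tfrac12$. Since $\varphi\in\mathcal{B}_T$ is only uniformly continuous on $\Gamma_i\times(0,T]$, we work with its extension $\varphi_i\in C(\ol{\Gamma}_i\times[0,T])$, which has a well-defined value $\varphi_i(x,t)$ at the interface.

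First I would decompose
\[
\int_{0}^{t}\!\!\int_{\Gamma_i}(D\Phi)(x_h-y,t-\tau)\!\cdot\!\ora{n}(x)\,\varphi(y,\tau)\,dS_y\,d\tau=\varphi_i(x,t)\,J_i(h)+R_i(h),
\]
where $J_i(h)\triangleq\int_0^t\!\int_{\Gamma_i}(D\Phi)(x_h-y,t-\tau)\!\cdot\!\ora{n}(x)\,dS_y\,d\tau$ and $R_i(h)$ is the remainder with integrand weighted by $g(y,\tau)\triangleq\varphi_i(y,\tau)-\varphi_i(x,t)$. Since $g$ is continuous on $\ol{\Gamma}_i\times[0,t]$ with $g(x,t)=0$, together with the identity $(x_h-y)\cdot\ora{n}(x)=(x-y)\cdot\ora{n}(x)-h$ and the $C^2$ geometric bound $(x-y)\cdot\ora{n}(x)=O(|x-y|^2)$ for $x,y\in\p\O$, one obtains a uniform-in-$h$ integrable dominating function and dominated convergence yields $R_i(h)\to R_i(0)$ as $h\to 0^+$. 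Thus it suffices to prove $\lim_{h\to 0^+}\!J_i(h)-J_i(0)=\tfrac14$.

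For $J_i$ I would fix $r>0$ small and split $J_i(h)=J_i^{r}(h)+J_i^{\mathrm{far}}(h)$ by whether $y\in B(x,r)$ or not. The far piece $J_i^{\mathrm{far}}$ is continuous at $h=0$ since its integrand is bounded uniformly for $|x-y|\geq r$. For $J_i^r$, apply a $C^2$ chart that sends $x\mapsto{\bf 0}$, $\ora{n}(x)\mapsto -{\bf e}_n$, and $\p\O$ into $\{y_n=0\}$; an additional $C^1$ rotation inside $\{y_n=0\}$ aligns $T_x\wt{\Gamma}$ with $\{y_1=0\}$, so $\Gamma_i\cap B(x,r)$ is carried (up to a $C^1$ perturbation of area $o(r^{n-1})$) onto the half-disk $H_+=\{\tilde y\in\wt{B}_r:y_1>0\}$. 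Modulo curvature, normal-variation, and Jacobian errors of size $O(r)$ uniformly in $h\in(0,r)$, the integrand collapses to the model half-plane kernel
\[
\tfrac{h}{2(t-\tau)}\,\big(4\pi(t-\tau)\big)^{-n/2}\exp\!\Big(-\tfrac{|\tilde y|^{2}+h^{2}}{4(t-\tau)}\Big),
\]
which is rotationally symmetric in $\tilde y$. Its integral over $H_+$ is therefore exactly one half of its integral over $\wt{B}_r$, and the latter is precisely the quantity whose limit equals $\tfrac12$ in the proof of Theorem \ref{Thm, Jump Relation, Original} (reducing, after the substitution $u=h^{2}/[4(t-\tau)]$, to $\tfrac{1}{\sqrt{\pi}}\int_0^\infty u^{-1/2}e^{-u}\,du=\tfrac{1}{\sqrt{\pi}}\,\Gamma(\tfrac12)=1$, whence $\tfrac12$). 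Sending $h\to 0^+$ and then $r\to 0^+$ delivers the advertised $\tfrac14\,\varphi_i(x,t)$.

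The main obstacle will be the uniform error control in the flattening step: the $C^2$ curvature of $\p\O$, the $C^1$ deviation of $\wt{\Gamma}$ from its tangent $(n-2)$-plane at $x$, and the surface Jacobian all perturb the idealized half-Gaussian and must each contribute $o(1)$ as $r\to 0^+$ uniformly in $h\in(0,r)$. The same $C^2$ identities $(x-y)\cdot\ora{n}(x)=O(|x-y|^2)$ and $|\ora{n}(y)-\ora{n}(x)|\leq C|x-y|$ (the latter used in Corollary \ref{Cor, Jump Relation for vary normal}) keep every error integrable and let dominated convergence close out the estimates, exactly as in the proof of Theorem \ref{Thm, Jump Relation, Original}.
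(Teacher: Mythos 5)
Your strategy is essentially the paper's: translate and rotate so that $x={\bf 0}$ and $\ora{n}(x)=-{\bf e_n}$, flatten $\p\O$ with a $C^2$ chart, discard the far field, reduce the near field to the radial model kernel on the tangent plane, and use the $C^1$ regularity of $\wt{\Gamma}$ to see that $\Gamma_i$ fills asymptotically half of each small tangent ball, so the usual single-layer jump $\frac12$ is halved to $\frac14$. The organizational difference --- you freeze $\varphi$ at $(x,t)$ first and push the constant through $J_i(h)$, while the paper carries $\varphi_1(y,t-\tau)$ along and invokes its continuity only at the end (in the $H(\lambda,t)$ limit and the estimate of the term $II$) --- is cosmetic.

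The one step that is not justified as written is ``$R_i(h)\to R_i(0)$ by dominated convergence.'' Writing $(x_h-y)\cdot\ora{n}(x)=(x-y)\cdot\ora{n}(x)-h$, the first piece is indeed dominated uniformly in $h$, via $|(x-y)\cdot\ora{n}(x)|\leq C|x-y|^2\leq C|x_h-y|^2$ and $|x-y|\leq 2|x_h-y|$. But the $-h$ piece, $h\,(t-\tau)^{-n/2-1}e^{-|x_h-y|^2/(4(t-\tau))}$, is an approximate identity concentrating at $(x,t)$: its pointwise supremum over $h>0$ is of order $(t-\tau)^{-(n+1)/2}e^{-c|x-y|^2/(t-\tau)}$, whose integral over $\p\O\times(0,t)$ diverges like $\int_0^t(t-\tau)^{-1}\,d\tau$. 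So there is no $h$-uniform integrable dominating function for this piece, and dominated convergence cannot close the estimate. What actually saves the step is that $g(y,\tau)=\varphi_i(y,\tau)-\varphi_i(x,t)$ vanishes at the concentration point: split $\Gamma_i$ into $\{|y-x|<\delta\}$, where $|g|<\v$ by uniform continuity and the kernel has bounded total mass, and $\{|y-x|\geq\delta\}$, where the $-h$ piece tends to $0$. This is precisely the content of the paper's comparison of $I_{\eta}$ with $\t{I}_{\eta}$ and its estimate of $II$ there, so the gap is routine to fill, but as stated your justification of the remainder term is incorrect and must be replaced by this approximate-identity argument.
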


\begin{proof}
We assume $i=1$ (The case $i=2$ is similar). By (\ref{Fund. Soln.}), (\ref{Jump Relation, Adapted}) becomes 
\begin{align}\label{detailed general jump relation}
&\lim_{h\rightarrow 0^{+}}-\int_{0}^{t}\int_{\Gamma_1}\,\frac{(x_h-y)\cdot\ora{n}(x)}{(t-\tau)^{n/2+1}}\,\exp\bigg(-\frac{|x_h-y|^2}{4(t-\tau)}\bigg)\,\varphi_1(y,\tau)\,dS_{y}\,d\tau \nonumber\\
=&-\int_{0}^{t}\int_{\Gamma_1}\,\frac{(x-y)\cdot\ora{n}(x)}{(t-\tau)^{n/2+1}}\,\exp\bigg(-\frac{|x-y|^2}{4(t-\tau)}\bigg)\,\varphi_1(y,\tau)\,dS_{y}\,d\tau+\frac{(4\pi)^{n/2}}{2}\,\varphi_1(x,t).
\end{align}
Without loss of generality, we assume $x={\bf 0}$, otherwise we can do a translation. After this, we further assume $\ora{n}({\bf 0})=-{\bf e_n}$, otherwise we can do a rotation which preserves the dot product and the distance. By these two simplifications, we have $x={\bf 0}$ and $\ora{n}(x)=-{\bf e_n}$, therefore $x_h=h{\bf e_n}$ and (\ref{detailed general jump relation}) is reduced to 
\begin{align*}
&\lim_{h\rightarrow 0^{+}}\int_{0}^{t}\int_{\Gamma_1}\,\frac{h-y_n}{(t-\tau)^{n/2+1}}\,\exp\bigg(-\frac{|h{\bf e_n}-y|^2}{4(t-\tau)}\bigg)\,\varphi_1(y,\tau)\,dS_{y}\,d\tau \\
=& -\int_{0}^{t}\int_{\Gamma_1}\,\frac{y_n}{(t-\tau)^{n/2+1}}\,\exp\bigg(-\frac{|y|^2}{4(t-\tau)}\bigg)\,\varphi_1(y,\tau)\,dS_{y}\,d\tau+\frac{(4\pi)^{n/2}}{2}\,\varphi_1({\bf 0},t).
\end{align*} 
By a change of variable in $\tau$, it is equivalent to
\begin{align}\label{simplified jump relation}
&\lim_{h\rightarrow 0^{+}}\int_{0}^{t}\int_{\Gamma_1}\,\frac{h-y_n}{\tau^{n/2+1}}\,\exp\bigg(-\frac{|h{\bf e_n}-y|^2}{4\tau}\bigg)\,\varphi_1(y,t-\tau)\,dS_{y}\,d\tau \nonumber\\
=& -\int_{0}^{t}\int_{\Gamma_1}\,\frac{y_n}{\tau^{n/2+1}}\,\exp\bigg(-\frac{|y|^2}{4\tau}\bigg)\,\varphi_1(y,t-\tau)\,dS_{y}\,d\tau+\frac{(4\pi)^{n/2}}{2}\,\varphi_1({\bf 0},t).
\end{align} 

Because $\p\O\in C^{2}$ and $\wt{\Gamma}=\p\Gamma_1\in C^{1}$, we can straighten the boundary. More specifically, after relabeling the coordinates, there exist $\phi_1\in C^{2}:\m{R}^{n-1}\rightarrow\m{R}$, $\phi_2\in C^{1}:\m{R}^{n-2}\rightarrow\m{R}$, $\eta_0>0$ and a neighborhood $S_{\eta_0}\subset\p\O$ of ${\bf 0}$ such that $S_{\eta_0}$ can be parametrized as 
\[S_{\eta_0}=\{\big(\t{y},\phi_1(\t{y})\big):\t{y}\in \wt{B}_{\eta_0}\}\]
and for any $y\in \wt{\Gamma}\cap S_{\eta_0}$, we not only have $y_n=\phi_1(\t{y})$, but also $y_{n-1}=\phi_2(y_1,y_2,\cdots,y_{n-2})$. Fixing $\eta_0$ and for any $\eta<\eta_0$, we define 
\[S_{\eta}=\{\big(\t{y},\phi_1(\t{y})\big):\t{y}\in \wt{B}_{\eta}\},\]
which is a subset of $S_{\eta_0}$ and also a small neighborhood of ${\bf 0}$. Then we denote \[S_{\eta,1}=S_{\eta}\cap\Gamma_1,\quad \wt{S}_{\eta}=S_{\eta}\cap\wt{\Gamma},\]
\[\wt{B}_{\eta,1}=\{\t{y}\in\wt{B}_{\eta}:\big(\t{y},\phi_1(\t{y})\big)\in S_{\eta,1}\},\quad P_{\eta}=\{\t{y}\in\wt{B}_{\eta}:\big(\t{y},\phi_1(\t{y})\big)\in \wt{S}_{\eta}\}.\]
After these preparations, we begin the technical proof. Given any $\v>0$, we want to find $\delta=\delta(\v)>0$ such that for any $0<h<\delta$, the difference between the two sides of (\ref{simplified jump relation}) is within $C\v$ for some constant $C$.

For any $\eta\in(0,\eta_0)$ which will be determined later, we split the integral over $\Gamma_1$ in (\ref{simplified jump relation}) into two parts: $\int_{\Gamma_1}=\int_{S_{\eta,1}}+\int_{\Gamma_1\setminus S_{\eta,1}}$. Since $\Gamma_1\setminus S_{\eta,1}$ is away from ${\bf 0}$, it is easy to see there exists $\delta_1=\delta_1(\eta,\v)$ such that when $0<h<\delta_1$, then 
\be\label{outside part estimate} \begin{split}
&\bigg|\int_{0}^{t}\int_{\Gamma_1\setminus S_{\eta,1}}\,\frac{h-y_n}{\tau^{n/2+1}}\,\exp\bigg(-\frac{|h{\bf e_n}-y|^2}{4\tau}\bigg)\,\varphi_1(y,t-\tau)\,dS_{y}\,d\tau\\
&+\int_{0}^{t}\int_{\Gamma_1\setminus S_{\eta,1}}\,\frac{y_n}{\tau^{n/2+1}}\,\exp\bigg(-\frac{|y|^2}{4\tau}\bigg)\,\varphi_1(y,t-\tau)\,dS_{y}\,d\tau\bigg|<\v.
\end{split}\ee

Next since $\ora{n}({\bf 0})=-{\bf e_n}$, then $D\phi_1(\t{{\bf 0}})=\t{{\bf 0}}$. As a result, for any $y\in S_{\eta,1}$, 
\be\label{straighten estimate}\begin{split}
&|y_n|=|\phi_1(\t{y})|=|\phi_1(\t{y})-\phi_1(\t{{\bf 0}})|=|D\phi_1(\theta\t{y})\cdot\t{y}|\\
\leq & |D\phi_1(\theta\t{y})-D\phi_1(\t{{\bf 0}})|\,|\t{y}|\leq C\,|\t{y}|^{2},
\end{split}\ee
where we used the mean value theorem twice. By (\ref{straighten estimate}), together with the fact $|h{\bf e_n}-y|\geq |\t{y}|$, we attain
\[|y_n|\,\exp\bigg(-\frac{|h{\bf e_n}-y|^2}{4\tau}\bigg)\leq C\,|\t{y}|^2\,\exp\bigg(-\frac{|\t{y}|^2}{4\tau}\bigg).\] 
Noticing \[\int_{0}^{t}\int_{S_{\eta,1}}\frac{|\t{y}|^{2}}{\tau^{n/2+1}}\exp\bigg(-\frac{|\t{y}|^2}{4\tau}\bigg)\,dS_{y}\,d\tau<\infty,\]
then it follows from Lebesgue's dominated convergence theorem that 
\begin{align*} 
& \lim_{h\rightarrow 0^{+}}\int_{0}^{t}\int_{S_{\eta,1}}\,\frac{y_n}{\tau^{n/2+1}}\,\exp\bigg(-\frac{|h{\bf e_n}-y|^2}{4\tau}\bigg)\,\varphi_1(y,t-\tau)\,dS_{y}\,d\tau \\
=&\int_{0}^{t}\int_{S_{\eta,1}}\,\frac{y_n}{\tau^{n/2+1}}\,\exp\bigg(-\frac{|y|^2}{4\tau}\bigg)\,\varphi_1(y,t-\tau)\,dS_{y}\,d\tau.
\end{align*}
As a result, there exists $\delta_2=\delta_2(\eta,\v)$ such that when $0<h<\delta_2$, then 
\be\label{inner convergence part estimate}\begin{split}
&\bigg|\int_{0}^{t}\int_{S_{\eta,1}}\,\frac{y_n}{\tau^{n/2+1}}\,\exp\bigg(-\frac{|h{\bf e_n}-y|^2}{4\tau}\bigg)\,\varphi_1(y,t-\tau)\,dS_{y}\,d\tau \\
&-\int_{0}^{t}\int_{S_{\eta,1}}\,\frac{y_n}{\tau^{n/2+1}}\,\exp\bigg(-\frac{|y|^2}{4\tau}\bigg)\,\varphi_1(y,t-\tau)\,dS_{y}\,d\tau\bigg|<\v.
\end{split}\ee

Now it suffices to verify that $|I_{\eta}(h,t)-\frac{1}{2}\,(4\pi)^{n/2}\,\varphi_{1}({\bf 0},t)|<C\v$, where
\be\label{inner jump part, surface integral}
I_{\eta}(h,t)\triangleq\int_{0}^{t}\int_{S_{\eta,1}}\,\frac{h}{\tau^{n/2+1}}\,\exp\bigg(-\frac{|h{\bf e_n}-y|^2}{4\tau}\bigg)\,\varphi_1(y,t-\tau)\,dS_{y}\,d\tau.
\ee
Recalling that $y_n=\phi(\t{y})$, (\ref{inner jump part, surface integral}) can be rewritten as 
\be\label{inner jump part, flat integral}\begin{split}
I_{\eta}(h,t)& \triangleq \int_{0}^{t}\int_{\wt{B}_{\eta,1}}\,\frac{h}{\tau^{n/2+1}}\,\exp\bigg(-\frac{|h{\bf e_n}-y|^2}{4\tau}\bigg)\,\varphi_1(y,t-\tau)\sqrt{1+|D\phi_1(\t{y})|^2}\,d\t{y}\,d\tau\\
&=\int_{0}^{t}\int_{\wt{B}_{\eta,1}}\,\frac{h}{\tau^{n/2+1}}\,\exp\bigg(-\frac{|\t{y}|^2+|h-y_n|^2}{4\tau}\bigg)\,\varphi_1(y,t-\tau)\sqrt{1+|D\phi_1(\t{y})|^2}\,d\t{y}\,d\tau,
\end{split}\ee
where $y=\big(\t{y},\phi_1(\t{y})\big)$. $I_{\eta}$ is hard to compute, so we approximate it by a simpler function. We define $\t{I}_{\eta}(h,t)$ as following
\be\label{inner jump part, approx.}\begin{split}
\t{I}_{\eta}(h,t)& \triangleq \int_{0}^{t}\int_{\wt{B}_{\eta,1}}\,\frac{h}{\tau^{n/2+1}}\,\exp\bigg(-\frac{|h{\bf e_n}-(\t{y},0)|^2}{4\tau}\bigg)\,\varphi_1({\bf 0},t-\tau)\,d\t{y}\,d\tau\\
&=\int_{0}^{t}\int_{\wt{B}_{\eta,1}}\,\frac{h}{\tau^{n/2+1}}\,\exp\bigg(-\frac{|\t{y}|^2+h^2}{4\tau}\bigg)\,\varphi_1({\bf 0},t-\tau)\,d\t{y}\,d\tau.
\end{split}\ee
Our strategy is to show that $\t{I}_{\eta}(h,t)$ is close to both $\frac{1}{2}\,(4\pi)^{n/2}\,\varphi_{1}({\bf 0},t)$ and $I_{\eta}(h,t)$.

Based on (\ref{inner jump part, approx.}), we first reverse the order of integration and then make the change of variable $\tau\rightarrow\sigma=\big(|\t{y}|^2+h^2\big)/(4\tau)$ to obtain
\begin{align}\label{inner jump part, approx, compact form}
\t{I}_{\eta}(h,t) &\triangleq\int_{\wt{B}_{\eta,1}}\,\int_{0}^{t}\frac{h}{\tau^{n/2+1}}\,\exp\bigg(-\frac{|\t{y}|^2+h^2}{4\tau}\bigg)\,\varphi_1({\bf 0},t-\tau)\,d\tau\,d\t{y} \nonumber\\
&= \int\limits_{\wt{B}_{\eta,1}}\,\int\limits_{\frac{|\t{y}|^2+h^2}{4t}}^{\infty}\frac{4^{n/2}\,\sigma^{n/2-1}\,h}{(|\t{y}|^2+h^2)^{n/2}}\,e^{-\sigma}\,\varphi_1\Big({\bf 0},t-\frac{|\t{y}|^2+h^2}{4\sigma}\Big)\,d\sigma\,d\t{y} 
\nonumber\\
&= 4^{n/2}\int\limits_{\wt{B}_{\eta,1}}\frac{h}{(|\t{y}|^2+h^2)^{n/2}}\int\limits_{\frac{|\t{y}|^2+h^2}{4t}}^{\infty}\sigma^{n/2-1}e^{-\sigma}\,\varphi_1\Big({\bf 0},t-\frac{|\t{y}|^2+h^2}{4\sigma}\Big)\,d\sigma\,d\t{y} \nonumber\\
&= 4^{n/2}\int_{\wt{B}_{\eta,1}}\frac{h}{(|\t{y}|^2+h^2)^{n/2}}\,H\big(|\t{y}|^2+h^2,t\big)\,d\t{y},
\end{align}
where 
\[H(\lambda,t)\triangleq\int_{\frac{\lambda}{4t}}^{\infty}\sigma^{n/2-1}e^{-\sigma}\,\varphi_1\Big({\bf 0},t-\frac{\lambda}{4\sigma}\Big)\,d\sigma.\]
It is readily to see that
\be\label{limit of H function}
\lim_{\lambda\rightarrow 0}H(\lambda,t)=\Gamma\Big(\frac{n}{2}\Big)\,\varphi_1({\bf 0},t).
\ee
Consequently, there exists $\delta_3=\delta_3(\v)$ such that when $\eta<\delta_3$ and $0<h<\delta_3$, then 
\be\label{estimate for H function}
\Big|H\big(|\t{y}|^2+h^2,t\big)-\Gamma\Big(\frac{n}{2}\Big)\varphi_1({\bf 0},t)\Big|<\v, \quad\forall\, \t{y}\in \wt{B}_{\eta,1}.
\ee

After having taken care of the $H$ term in (\ref{inner jump part, approx, compact form}), let's consider the following integration
\[\int\limits_{\wt{B}_{\eta,1}}\frac{h}{(|\t{y}|^2+h^2)^{n/2}}\,d\t{y},\]
where the integrand $h\,\big(|\t{y}|^2+h^2\big)^{-n/2}$ is radian in $\t{y}\in\m{R}^{n-1}$ and positive when $h>0$. Since $\wt{\Gamma}=\p\Gamma_1\in C^{1}$, it ensures that $P_{\eta}$ almost bisects $\wt{B}_{\eta}$ when $\eta$ is  small, which means $\wt{B}_{\eta,1}$ is close to a hemisphere and
\[\lim_{\eta\rightarrow 0}\frac{|\wt{B}_{\eta,1}|}{|\wt{B}_{\eta}|}=\frac{1}{2}.\] As a result, we can find $\delta_4=\delta_4(\v)$ such that for any $\eta<\delta_4$, 
\[1-\v<\frac{\int\limits_{\wt{B}_{\eta,1}}\frac{h}{(|\t{y}|^2+h^2)^{n/2}}\,d\t{y}}{\frac{1}{2}\int\limits_{\wt{B}_{\eta}}\frac{h}{(|\t{y}|^2+h^2)^{n/2}}\,d\t{y}}<1+\v,\]
i.e. 
\be\label{estimate the integral by half of the integral over a whole ball}
\bigg|\int\limits_{\wt{B}_{\eta,1}}\frac{h}{(|\t{y}|^2+h^2)^{n/2}}\,d\t{y}-\frac{1}{2}\int\limits_{\wt{B}_{\eta}}\frac{h}{(|\t{y}|^2+h^2)^{n/2}}\,d\t{y}\bigg|<\frac{\v}{2}\int\limits_{\wt{B}_{\eta}}\frac{h}{(|\t{y}|^2+h^2)^{n/2}}\,d\t{y}. \ee
Next, we will estimate $\int\limits_{\wt{B}_{\eta}}\frac{h}{(|\t{y}|^2+h^2)^{n/2}}\,d\t{y}$.
Making the change of variable $\t{y}\rightarrow \t{z}\triangleq\t{y}/h$,  
\bes \int\limits_{\wt{B}_{\eta}}\frac{h}{(|\t{y}|^2+h^2)^{n/2}}\,d\t{y}=\int\limits_{\wt{B}_{\eta/h}}\frac{1}{(|\t{z}|^2+1)^{n/2}}\,d\t{z}. \ees
On one hand, 
\[\int\limits_{\wt{B}_{\eta/h}}\frac{1}{(|\t{z}|^2+1)^{n/2}}\,d\t{z}\leq \int\limits_{\m{R}^{n-1}}\frac{1}{(|\t{z}|^2+1)^{n/2}}\,d\t{z}=\frac{\pi^{n/2}}{\Gamma\big(\frac{n}{2}\big)},\]
while on the other hand, 
\[\lim_{h\rightarrow 0}\int\limits_{\wt{B}_{\eta/h}}\frac{1}{(|\t{z}|^2+1)^{n/2}}\,d\t{z}=\int\limits_{\m{R}^{n-1}}\frac{1}{(|\t{z}|^2+1)^{n/2}}\,d\t{z}=\frac{\pi^{n/2}}{\Gamma\big(\frac{n}{2}\big)}.\]
Thus, there exists $\delta_5=\delta_5(\eta,\v)$ such that for any $0<h<\delta_5$,  
\be\label{the value of the integral over the whole ball}
\bigg|\int\limits_{\wt{B}_{\eta}}\frac{h}{(|\t{y}|^2+h^2)^{n/2}}\,d\t{y}-\frac{\pi^{n/2}}{\Gamma\big(\frac{n}{2}\big)}\bigg|<\v \ee
and therefore by (\ref{estimate the integral by half of the integral over a whole ball}),
\be\label{estimate the targeted integral}
\bigg|\int\limits_{\wt{B}_{\eta,1}}\frac{h}{(|\t{y}|^2+h^2)^{n/2}}\,d\t{y}-\frac{\pi^{n/2}}{2\Gamma\big(\frac{n}{2}\big)}\bigg|<C\v. \ee
It then follows from (\ref{inner jump part, approx, compact form}), (\ref{estimate for H function}) and (\ref{estimate the targeted integral}) that 
\be\label{estimate for the approximated integral}
\Big|\t{I}_{\eta}(h,t)-\frac{(4\pi)^{n/2}}{2}\varphi_1({\bf 0},t)\Big|<C\v. \ee

Now it left to show that $\t{I}_{\eta}(h,t)$ is close to $I_{\eta}(h,t)$. Firstly, because of (\ref{straighten estimate}), $|\t{y}|^2+|h-y_n|^2$ is comparable to $|\t{y}|^2+h^2$. More precisely, there exist positive constants $m_1<1$ and $M_{1}>1$ such that 
\be\label{distance comparable}
m_1\,\big(|\t{y}|^2+h^2\big)\leq |\t{y}|^2+|h-y_n|^2 \leq M_{1}\,\big(|\t{y}|^2+h^2\big). \ee
We can equivalently write it to be
\be\label{distance comparable, compact form}
m_1\,|h{\bf e_n}-(\t{y},0)|^2\leq |h{\bf e_n}-y|^2\leq M_{1}\,|h{\bf e_n}-(\t{y},0)|^2. \ee
Next, it follows from (\ref{inner jump part, flat integral}) and (\ref{inner jump part, approx.}) that
\begin{align}
&|I_{\eta}(h,t)-\t{I}_{\eta}(h,t)| \nonumber\\
\leq & \int_{0}^{t}\frac{h}{\tau^{n/2+1}}\int_{\wt{B}_{\eta,1}}\Big|e^{-\frac{|h{\bf e_n}-y|^2}{4\tau}}-e^{-\frac{|h{\bf e_n}-(\t{y},0)|^2}{4\tau}}\Big|\,\Big|\varphi_1(y,t-\tau)\Big|\sqrt{1+|D\phi_1(\t{y})|^2}\,d\t{y}\,d\tau \nonumber\\
&+\int_{0}^{t}\frac{h}{\tau^{n/2+1}}\int_{\wt{B}_{\eta,1}}e^{-\frac{|h{\bf e_n}-(\t{y},0)|^2}{4\tau}}\Big|\varphi_1(y,t-\tau)\sqrt{1+|D\phi_1(\t{y})|^2}-\varphi_1({\bf 0}, t-\tau)\Big|\,d\t{y}\,d\tau \nonumber\\
\triangleq & I+II,
\end{align}
where $y=\big(\t{y},\phi_1(\t{y})\big)$. For $II$, since $\varphi_1\in C\big(\ol{\Gamma}_1\times[0,T]\big)$, then there exists $\delta_6=\delta_6(\v)$ such that when $\eta<\delta_6$,
\[\Big|\varphi_1(y,t-\tau)\sqrt{1+|D\phi_1(\t{y})|^2}-\varphi_1({\bf 0}, t-\tau)\Big|<\v, \quad\forall\,\t{y}\in \wt{B}_{\eta,1},\,\tau\in[0,t].\]
As a result, 
\begin{align*}
II &\leq \v\int_{0}^{t}\frac{h}{\tau^{n/2+1}}\int_{\wt{B}_{\eta,1}}e^{-\frac{|h{\bf e_n}-(\t{y},0)|^2}{4\tau}}\,d\t{y}\,d\tau\\
&=\v\int_{0}^{t}\frac{h}{\tau^{n/2+1}}\int_{\wt{B}_{\eta,1}}e^{-\frac{|\t{y}|^2+h^2}{4\tau}}\,d\t{y}\,d\tau\\
&=\v\int_{\wt{B}_{\eta,1}}h\int_{0}^{t}\frac{1}{\tau^{n/2+1}}\,e^{-\frac{|\t{y}|^2+h^2}{4\tau}}\,d\tau\,d\t{y}\\
&\leq \v\int_{\wt{B}_{\eta,1}}h\int_{0}^{\infty}\frac{4^{n/2}}{\big(|\t{y}|^2+h^2\big)^{n/2}}\,\sigma^{n/2-1}\,e^{-\sigma}\,d\sigma\,d\t{y}\\
&=C\,\v\int_{\wt{B}_{\eta,1}}\frac{h}{\big(|\t{y}|^2+h^2\big)^{n/2}}\,d\t{y},
\end{align*}
where the second inequality is due to the change of variable 
$\tau\rightarrow\sigma\triangleq \frac{|\t{y}|^2+h^2}{4\tau}$. Now by another change of variabel $\t{y}\rightarrow\t{z}\triangleq\t{y}/h$, we get
\begin{align}\label{estimate for the second part of the diff.}
II &\leq C\,\v\int_{\m{R}^{n-1}}\frac{1}{(|\t{z}|^2+1)^{n/2}}\,d\t{z} =C\v.
\end{align}
To estimate $I$, firstly it is easy to see that for any $h>0$ and $y\in\wt{B}_{\eta,1}$,
\begin{align}\label{estimate for h}
h\leq |h{\bf e_n}-(\t{y},0)|.
\end{align}
Then by (\ref{straighten estimate}),
\begin{align*}
&\Big||h{\bf e_n}-y|^2-|h{\bf e_n}-(\t{y},0)|^2\Big|\\
=\, & \big|(h-y_n)^2-h^2\big|\\
=\, & |y_n|\,|2h-y_n|\\
\leq\, & C\,|\t{y}|^2\big(2h+|\t{y}|^2\big)\\
\leq\, & C\,|h{\bf e_n}-(\t{y},0)|^3.
\end{align*}
Now it follows from the mean value theorem and (\ref{distance comparable, compact form}) that
\begin{align}\label{difference in exp. func.}
\Big|e^{-\frac{|h{\bf e_n}-y|^2}{4\tau}}-e^{-\frac{|h{\bf e_n}-(\t{y},0)|^2}{4\tau}}\Big|&\leq \frac{1}{4\tau}\,e^{-\frac{m_1\,|h{\bf e_n}-(\t{y},0)|^2}{4\tau}}\,\Big||h{\bf e_n}-y|^2-|h{\bf e_n}-(\t{y},0)|^2\Big| \nonumber\\
&\leq C\,\frac{|h{\bf e_n}-(\t{y},0)|^3}{\tau}\,e^{-\frac{m_1\,|h{\bf e_n}-(\t{y},0)|^2}{4\tau}}
\end{align}
Thus, based on (\ref{estimate for h}) and (\ref{difference in exp. func.}), we attain
\[I\leq \int_{0}^{t}\int_{\wt{B}_{\eta,1}}\frac{1}{\tau^{n/2+2}}\,e^{-\frac{m_1\,|h{\bf e_n}-(\t{y},0)|^2}{4\tau}}\,|h{\bf e_n}-(\t{y},0)|^4\,d\t{y}\,d\tau.\]
Again, by reversing the order of integration and the change of variable $\tau\rightarrow\sigma\triangleq \frac{|h{\bf e_n}-(\t{y},0)|^2}{4\tau}$, we get
\begin{align*}
I&\leq \int_{\wt{B}_{\eta,1}}|h{\bf e_n}-(\t{y},0)|^4\int_{0}^{t}\frac{1}{\tau^{n/2+2}}\,e^{-\frac{m_1\,|h{\bf e_n}-(\t{y},0)|^2}{4\tau}}\,d\tau\,d\t{y} \nonumber\\
& \leq C\int_{\wt{B}_{\eta,1}}\frac{1}{|h{\bf e_n}-(\t{y},0)|^{n-2}}\int_{0}^{\infty}\sigma^{n/2}\,e^{-m_1\sigma}\,d\sigma\,d\t{y}\nonumber\\
&\leq C\int_{\wt{B}_{\eta,1}}\frac{1}{|\t{y}|^{n-2}}\,d\t{y}.
\end{align*}
Hence, there exists $\delta_7=\delta_7(\v)$ such that when $\eta<\delta_7$, then 
\be\label{estimate for the first part of the diff.}
I<\v. \ee
Combining (\ref{estimate for the first part of the diff.}) and (\ref{estimate for the second part of the diff.}), we get \[|I_{\eta}(h,t)-\t{I}_{\eta}(h,t)|<C\v.\] Therefore, we finish the proof.

In summary, for any $\v>0$, we firstly determine $\delta_3(\v),\delta_4(\v),\delta_6(\v),\delta_7(\v)$ and  choose\\
$\eta<\min\{\eta_0,\delta_3,\delta_4,\delta_6,\delta_7\}$. Then we determine $\delta_1(\eta,\v),\delta_2(\eta,\v),\delta_5(\eta,\v)$ and choose $\delta<\min\{\delta_1,\delta_2,\delta_3,\delta_5\}$. Such $\delta$ is what we desire, because we can see from the above proof that for any $0<h<\delta$, the difference between the two sides of (\ref{Jump Relation, Adapted}) is less than $C\v$ for some constant $C$. 
\end{proof}

\begin{corollary}\label{Cor, Jump Relation, Adapted trivial}
Let $\varphi\in \mathcal{B}_{T}$, $i=1$ or $2$, then for any $x\in \Gamma_i$, $t\in (0,T]$, 
\begin{align}\label{Jump Relation, Adapted trivial}
&\lim_{h\rightarrow 0^{+}}\int_{0}^{t}\int_{\Gamma_i}(D\Phi)\big(x_h-y,t-\tau\big)\cdot\ora{n}(x)\,\varphi(y,\tau)\,dS_{y}\,d\tau \nonumber \\
=&\int_{0}^{t}\int_{\Gamma_i}(D\Phi)(x-y,t-\tau)\cdot\ora{n}(x)\,\varphi(y,\tau)\,dS_{y}\,d\tau+\frac{1}{2}\,\varphi(x,t).
\end{align}
\end{corollary}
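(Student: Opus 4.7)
The plan is to reduce Corollary \ref{Cor, Jump Relation, Adapted trivial} to Theorem \ref{Thm, Jump Relation, Original} by exploiting the fact that, unlike the setting of Theorem \ref{Thm, Jump Relation, Adapted}, here $x$ lies in the open piece $\Gamma_i$ rather than on the interface $\widetilde{\Gamma}$. This means $x$ has positive distance to $\overline{\Gamma}_j$ (where $j\ne i$), so the only source of a jump is the behavior of the kernel near $x$ itself, which sits comfortably inside $\Gamma_i$.

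First I would package the data into a continuous function on the whole boundary. Since $\varphi\in\mathcal{B}_{T}$, its restriction $\varphi_{i}=\varphi|_{\Gamma_{i}\times(0,T]}$ extends continuously to $\overline{\Gamma}_{i}\times[0,T]$. Because $\overline{\Gamma}_{i}$ is a closed subset of the compact metric space $\p\O$, Tietze's extension theorem provides a function $g\in C\big(\p\O\times[0,T]\big)$ with $g=\varphi_{i}$ on $\overline{\Gamma}_{i}\times[0,T]$. In particular $g(x,t)=\varphi(x,t)$ for the given $x\in\Gamma_{i}$.

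Next, I would apply Theorem \ref{Thm, Jump Relation, Original} to $g$, obtaining
\[
\lim_{h\to 0^{+}}\int_{0}^{t}\!\!\int_{\p\O}(D\Phi)(x_h-y,t-\tau)\cdot\ora{n}(x)\,g(y,\tau)\,dS_{y}\,d\tau
=\int_{0}^{t}\!\!\int_{\p\O}(D\Phi)(x-y,t-\tau)\cdot\ora{n}(x)\,g(y,\tau)\,dS_{y}\,d\tau+\tfrac12 g(x,t).
\]
Splitting $\p\O$ up to the measure-zero set $\widetilde{\Gamma}$ as $\Gamma_i\cup\Gamma_j$, this is the sum of an integral over $\Gamma_{i}$ (which by construction coincides with the $\varphi$-integral appearing in the statement) and an integral over $\Gamma_{j}$. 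So it suffices to show the $\Gamma_{j}$-integral is continuous in $h$ at $h=0$.

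This last step is the only place where anything needs to be checked, and it is routine rather than an obstacle: since $x\in\Gamma_{i}$ and $\Gamma_{i}\cap\overline{\Gamma}_{j}=\emptyset$, we have $d\triangleq \text{dist}(x,\overline{\Gamma}_{j})>0$, so for all sufficiently small $h>0$, $|x_h-y|\ge d/2$ uniformly for $y\in\Gamma_{j}$. Consequently the kernel $(D\Phi)(x_h-y,t-\tau)\cdot\ora{n}(x)$ is dominated on $\Gamma_{j}\times(0,t)$ by the integrable bound $C(d)(t-\tau)^{-n/2-1}\exp(-d^{2}/(16(t-\tau)))$ together with the Lemma \ref{Lemma, bound for surface integral} style estimate, so Lebesgue's dominated convergence theorem passes the limit inside and gives
\[
\lim_{h\to 0^{+}}\int_{0}^{t}\!\!\int_{\Gamma_{j}}(D\Phi)(x_h-y,t-\tau)\cdot\ora{n}(x)\,g(y,\tau)\,dS_{y}\,d\tau=\int_{0}^{t}\!\!\int_{\Gamma_{j}}(D\Phi)(x-y,t-\tau)\cdot\ora{n}(x)\,g(y,\tau)\,dS_{y}\,d\tau.
\]
Subtracting this from the jump relation for $g$ and using $g(x,t)=\varphi(x,t)$ yields (\ref{Jump Relation, Adapted trivial}). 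The conceptual point is that the full $\tfrac12$ jump (rather than the $\tfrac14$ jump of Theorem \ref{Thm, Jump Relation, Adapted}) survives precisely because $x$ is an interior point of $\Gamma_{i}$, so the approximating surface "wraps around" $x$ on the full half-space side rather than only the half corresponding to one of the two pieces.
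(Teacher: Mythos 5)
Your proof is correct, but it takes a genuinely different route from the paper's. The paper disposes of this corollary in one line: it re-runs the local computation of Theorem \ref{Thm, Jump Relation, Adapted}, observing that when $x$ is an interior point of $\Gamma_i$ the flattened patch $\wt{B}_{\eta,1}$ fills the whole ball $\wt{B}_{\eta}$ rather than (asymptotically) half of it, so the limit in (\ref{estimate the integral by half of the integral over a whole ball})--(\ref{estimate the targeted integral}) is $\pi^{n/2}/\Gamma(n/2)$ instead of $\pi^{n/2}/\big(2\Gamma(n/2)\big)$, which converts the $\tfrac14$ jump into a $\tfrac12$ jump. You instead reduce to the classical Jump Relation (Theorem \ref{Thm, Jump Relation, Original}) by a Tietze extension of $\varphi_i$ to all of $\p\O\times[0,T]$, split $\p\O$ into $\Gamma_i\cup\Gamma_j$ up to the surface-measure-zero interface $\wt{\Gamma}$, and kill the $\Gamma_j$ contribution to the jump by dominated convergence using $\mathrm{dist}(x,\ol{\Gamma}_j)>0$. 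All steps check out: $g=\varphi_i=\varphi$ on $\Gamma_i\times(0,T]$, the kernel bound on the far piece is uniform in $h$ small, and subtracting the two limits isolates the $\Gamma_i$ integral with the full $\tfrac12 g(x,t)=\tfrac12\varphi(x,t)$ jump. What your approach buys is that Theorem \ref{Thm, Jump Relation, Original} is used as a black box and no part of the delicate local parametrization argument has to be revisited; what the paper's approach buys is that it stays entirely inside its own machinery (no appeal to Tietze) and makes transparent exactly where the factor $\tfrac12$ versus $\tfrac14$ comes from, namely the proportion of the tangent ball covered by $\Gamma_i$ near $x$.
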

\begin{proof}
The proof is almost the same as that of Theorem \ref{Thm, Jump Relation, Adapted}. The only difference is this time $x$ is an inner point of $\Gamma_{i}$, as a result, the jump term becomes $\frac{1}{2}\,\varphi(x,t)$ instead of $\frac{1}{4}\,\varphi(x,t)$.
\end{proof}

\begin{corollary}\label{Cor, Jump Relation, Adapted to whole bdry}
Let $\varphi\in \mathcal{B}_{T}$, then for any $x\in \Gamma_1\cup\Gamma_2$, $t\in (0,T]$, 
\begin{align*}
&\lim_{h\rightarrow 0^{+}}\int_{0}^{t}\int_{\p\O}(D\Phi)\big(x_h-y,t-\tau\big)\cdot\ora{n}(x)\,\varphi(y,\tau)\,dS_{y}\,d\tau \nonumber \\
=&\int_{0}^{t}\int_{\p\O}(D\Phi)(x-y,t-\tau)\cdot\ora{n}(x)\,\varphi(y,\tau)\,dS_{y}\,d\tau+\frac{1}{2}\,\varphi(x,t).
\end{align*}
\end{corollary}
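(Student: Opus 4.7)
The plan is to reduce this corollary to Corollary \ref{Cor, Jump Relation, Adapted trivial} by splitting the boundary integrals over $\p\O$ into contributions from $\Gamma_1$ and $\Gamma_2$, applying the jump formula to the piece containing $x$, and using dominated convergence on the other piece. Since $\wt{\Gamma}=\ol{\Gamma}_1\cap\ol{\Gamma}_2=\p\Gamma_1=\p\Gamma_2$ is $(n-2)$-dimensional (because it is $C^1$ as the boundary of an $(n-1)$-dimensional surface), its $(n-1)$-dimensional surface measure vanishes. Hence for every $t>\tau>0$ and every $z\in\m{R}^{n}$,
\[
\int_{\p\O}(D\Phi)(z-y,t-\tau)\cdot\ora{n}(x)\,\varphi(y,\tau)\,dS_y
= \int_{\Gamma_1}(\cdots)\,dS_y + \int_{\Gamma_2}(\cdots)\,dS_y.
\]

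Without loss of generality assume $x\in\Gamma_1$ (the other case is symmetric). First I would apply Corollary \ref{Cor, Jump Relation, Adapted trivial} with $i=1$ directly to the $\Gamma_1$ piece to pick up the jump term
\[
\lim_{h\to 0^+}\int_{0}^{t}\int_{\Gamma_1}(D\Phi)(x_h-y,t-\tau)\cdot\ora{n}(x)\,\varphi(y,\tau)\,dS_y\,d\tau
=\int_{0}^{t}\int_{\Gamma_1}(D\Phi)(x-y,t-\tau)\cdot\ora{n}(x)\,\varphi(y,\tau)\,dS_y\,d\tau+\tfrac{1}{2}\varphi(x,t).
\]
The $\Gamma_2$ piece contributes no jump: since $\Gamma_1$ is open in $\p\O$ and disjoint from $\Gamma_2$, one can choose a relatively open neighborhood $U\subset\Gamma_1$ of $x$; then $U\cap\Gamma_2=\emptyset$ forces $U\cap\ol{\Gamma}_2=\emptyset$ (as $U$ is open), so $d_0\triangleq \operatorname{dist}(x,\ol{\Gamma}_2)>0$. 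Hence for $0<h<d_0/2$ and all $y\in\Gamma_2$ we have $|x_h-y|\geq d_0/2$, and the integrand is bounded uniformly in $h$ by a multiple of $(t-\tau)^{-(n/2+1)}\exp\!\big(-d_0^{2}/[16(t-\tau)]\big)\|\varphi\|_{L^\infty}$, which is integrable on $[0,t]\times\Gamma_2$ because the exponential dominates any polynomial singularity at $\tau=t$. Dominated convergence then yields
\[
\lim_{h\to 0^+}\int_{0}^{t}\int_{\Gamma_2}(D\Phi)(x_h-y,t-\tau)\cdot\ora{n}(x)\,\varphi(y,\tau)\,dS_y\,d\tau
=\int_{0}^{t}\int_{\Gamma_2}(D\Phi)(x-y,t-\tau)\cdot\ora{n}(x)\,\varphi(y,\tau)\,dS_y\,d\tau.
\]
Adding the two limits recovers the desired identity.

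The only subtle step is the topological observation that $x\in\Gamma_1$ is bounded away from $\ol{\Gamma}_2$, which hinges on $\Gamma_1,\Gamma_2$ being \emph{open} and disjoint; everything else is routine because Corollary \ref{Cor, Jump Relation, Adapted trivial} already absorbs the real analytic difficulty of the jump. Thus no new technical ideas are needed, and the corollary should follow in a few short lines.
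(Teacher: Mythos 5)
Your proposal is correct and follows essentially the same route as the paper: split $\int_{\p\O}$ into $\int_{\Gamma_1}+\int_{\Gamma_2}$, apply Corollary \ref{Cor, Jump Relation, Adapted trivial} to the piece containing $x$ to produce the jump $\frac12\varphi(x,t)$, and use the positive distance from $x$ to $\ol{\Gamma}_2$ together with dominated convergence to show the other piece contributes no jump. The paper leaves the second step as ``easy to see''; your explicit domination argument simply fills in that routine detail.
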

\begin{proof}
Without loss of generality, we suppose $x\in\Gamma_1$, then by Corollary \ref{Cor, Jump Relation, Adapted trivial}, 
\begin{align*}
&\lim_{h\rightarrow 0^{+}}\int_{0}^{t}\int_{\Gamma_1}(D\Phi)\big(x_h-y,t-\tau\big)\cdot\ora{n}(x)\,\varphi(y,\tau)\,dS_{y}\,d\tau \nonumber \\
=&\int_{0}^{t}\int_{\Gamma_1}(D\Phi)(x-y,t-\tau)\cdot\ora{n}(x)\,\varphi(y,\tau)\,dS_{y}\,d\tau+\frac{1}{2}\,\varphi(x,t).
\end{align*}
In addition, since the distance between $x$ and $\Gamma_2$ is positive, then it is easy to see that 
\begin{align*}
&\lim_{h\rightarrow 0^{+}}\int_{0}^{t}\int_{\Gamma_2}(D\Phi)\big(x_h-y,t-\tau\big)\cdot\ora{n}(x)\,\varphi(y,\tau)\,dS_{y}\,d\tau \nonumber \\
=&\int_{0}^{t}\int_{\Gamma_2}(D\Phi)(x-y,t-\tau)\cdot\ora{n}(x)\,\varphi(y,\tau)\,dS_{y}\,d\tau.
\end{align*}
Adding these two equations together, the Corollary follows.
\end{proof}

\section{Existence and Uniqueness}
\label{Sec, exist. and uniq.}

\subsection{Linear Case}
\label{Subsec, Linear Case}

In this subsection, we will show the existence and uniqueness of the solution to the following linear initial-boundary value problem:
\be\label{Linear Prob, Simple Model}
\left\{\begin{array}{lll}
u_t(x,t)-\Delta u(x,t)=f(x,t) &\text{in}& \O\times(0,T], \\
\frac{\p u}{\p n}(x,t)+\beta(x,t) u(x,t)=g(x,t) &\text{on}& (\Gamma_1\cup\Gamma_2)\times(0,T],\\
u(x,0)=\psi(x) &\text{in}& \O,
\end{array}\right. \ee
where $f\in C^{\alpha,\alpha/2}\big(\ol{\O}\times[0,T]\big)$, $\beta,g\in\mathcal{B}_{T}$, $\psi\in C^{1}(\overline{\O})$.
We will first show the existence and then use the existence to verify the uniqueness. In the following, $\Gamma$ is also used to denote the Gamma function, i.e. $\Gamma:(0,\infty)\rightarrow\m{R}$ defined by $$\Gamma(a)=\int_{0}^{\infty}t^{a-1}e^{-t}\,dt.$$
For any $T>0$, we write
\be\label{domain for the kernel}
D_{\O,T}=\{(x,t;y,\tau)\,\big|\, x,y\in\O, x\neq y, 0\leq\tau<t\leq T\}\ee
to denote the domain of $K_{j}(j\geq 0)$ which will be constructed in the proof of Theorem \ref{Thm, exist. for linear simple model}. The solution to (\ref{Linear Prob, Simple Model}) is understood in the following way.

\begin{definition}\label{Def, soln to Linear Prob.}
For any $T>0$, a solution to (\ref{Linear Prob, Simple Model}) on $\ol{\O}\times[0,T]$ means a function $u$ in $\mathcal{A}_{T}$ that satisfies (\ref{Linear Prob, Simple Model}) pointwise and moreover, for any $(x,t)\in\wt{\Gamma}\times(0,T]$, $\frac{\p u}{\p n}(x,t)$ exists and
\be\label{interface bdry deri. for linear model}
\frac{\p u}{\p n}(x,t)+ \frac{1}{2}\,\big[\beta_1(x,t)+\beta_2(x,t)\big]u(x,t)=\frac{1}{2}\,\big[g_1(x,t)+g_2(x,t)\big], \ee
where $\beta_i$ and $g_i$ denote the extensions of $\beta$ and $g$ on $\ol{\Gamma}_{i}\times[0,T]$ for $i=1$ or $2$. 
\end{definition}

Before showing the existence of the solution, we state some basic properties. 

\begin{lemma}\label{Lemma, geom. prop. of C^2 bdry}
Suppose $\O$ is an open bounded set in $\m{R}^{n}$ with $\p\O\in C^2$, then there exists a constant $C>0$ such that for any $x,y\in\p\O$,
\[|(x-y)\cdot\ora{n}(x)|\leq C\,|x-y|^2.\]
\end{lemma}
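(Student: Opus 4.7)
The plan is to split into two regimes according to the size of $|x-y|$ and handle the short-range case via a local $C^{2}$ graph parametrization together with a uniformity argument based on compactness of $\p\O$.

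Since $\p\O$ is compact, it has finite diameter $d<\infty$. For any $x,y\in\p\O$ with $|x-y|\ge\delta$ (where $\delta>0$ is to be chosen below), the estimate is immediate: $|(x-y)\cdot\ora{n}(x)|\le |x-y|\le |x-y|^{2}/\delta$, so it suffices to take $C\ge 1/\delta$ in this regime. The real content is therefore the short-range case $|x-y|<\delta$.

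In the short-range case I would argue locally. Fix $x\in\p\O$, rotate coordinates so that $\ora{n}(x)=\mathbf{e}_{n}$, and use the $C^{2}$ regularity of $\p\O$ to write a neighborhood of $x$ in $\p\O$ as the graph $y_{n}-x_{n}=\phi(\tilde{y}-\tilde{x})$, where $\tilde{y}$ denotes the first $n-1$ coordinates of $y$ and $\phi$ is a $C^{2}$ function with $\phi(\tilde{0})=0$ and $D\phi(\tilde{0})=\tilde{0}$. The latter identity is the crucial point: it holds because the tangent hyperplane to $\p\O$ at $x$ is orthogonal to $\ora{n}(x)=\mathbf{e}_{n}$, so the graph has horizontal tangent at $\tilde{0}$. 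A second-order Taylor expansion then yields
\[
y_{n}-x_{n}=\tfrac{1}{2}\,(\tilde{y}-\tilde{x})^{T}D^{2}\phi(\tilde{\xi})(\tilde{y}-\tilde{x})
\]
for some $\tilde{\xi}$ on the segment between $\tilde{0}$ and $\tilde{y}-\tilde{x}$, whence $|y_{n}-x_{n}|\le \tfrac{1}{2}\|D^{2}\phi\|_{\infty}|\tilde{y}-\tilde{x}|^{2}\le C|y-x|^{2}$. Since $(y-x)\cdot\ora{n}(x)=y_{n}-x_{n}$ in these coordinates, the desired bound follows.

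The main (mild) obstacle is to make the constant $C$ independent of the base point $x$, since at first sight both the rotation and the defining function $\phi$ depend on $x$. I would address this by invoking compactness of $\p\O$: cover $\p\O$ by finitely many coordinate charts $V_{1},\dots,V_{N}$ in each of which $\p\O$ is a $C^{2}$ graph with a uniform bound on the Hessian of the defining function, and apply the Lebesgue number lemma to choose $\delta>0$ so that whenever $|x-y|<\delta$ both $x$ and $y$ lie in a common $V_{i}$. Combining the long-range and short-range estimates then produces a single constant $C=C(n,\O)$, completing the proof.
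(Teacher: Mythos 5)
The paper does not actually give a proof of this lemma --- it is dismissed with ``It is easy to show this conclusion by taking advantage of the definition of a $C^{2}$ boundary'' --- so there is no route in the text to compare against. Your sketch follows the standard path (long-range vs.\ short-range split, local graph parametrization, second-order Taylor, compactness for uniformity) and is essentially right, but the uniformity step, which you yourself flag as the main obstacle, is not quite closed as written.

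The issue is that you rotate coordinates so that $\ora{n}(x)=\mathbf{e}_n$ and use $D\phi(\tilde0)=\tilde0$ to kill the first-order Taylor term, and then you want to bound $\|D^2\phi\|_\infty$. But the compactness argument you invoke supplies a uniform Hessian bound for the \emph{chart} defining functions $\psi_i$, each of which is a graph over one fixed hyperplane per chart; the function $\phi$ whose Hessian you actually need is the graph over the tangent plane at $x$, an $x$-dependent rotation of that picture, and its Hessian is only controlled by $\|D^2\psi_i\|$ after a change of variables whose distortion you would still have to bound. The cleanest fix is to forgo the $x$-dependent rotation altogether and compute directly in the chart coordinates of $V_i$: writing $x=\big(\tilde{x},\psi_i(\tilde{x})\big)$, $y=\big(\tilde{y},\psi_i(\tilde{y})\big)$, and $\ora{n}(x)=\pm\big(-D\psi_i(\tilde{x}),1\big)\big/\sqrt{1+|D\psi_i(\tilde{x})|^2}$, one finds
\[
(y-x)\cdot\ora{n}(x)=\pm\,\frac{\psi_i(\tilde{y})-\psi_i(\tilde{x})-D\psi_i(\tilde{x})\cdot(\tilde{y}-\tilde{x})}{\sqrt{1+|D\psi_i(\tilde{x})|^2}},
\]
and the numerator is exactly the Lagrange remainder in the first-order Taylor expansion of $\psi_i$ at $\tilde{x}$, hence is bounded in absolute value by $\tfrac12\|D^2\psi_i\|_\infty|\tilde{y}-\tilde{x}|^2\le\tfrac12\|D^2\psi_i\|_\infty|x-y|^2$. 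The first-order terms cancel automatically, with no need for $D\psi_i(\tilde{x})$ to vanish, so the uniform chart Hessian bound applies directly and the constant is manifestly $C=C(n,\O)$. Your long-range step and your Lebesgue-number argument to ensure $x,y$ lie in a common chart are both fine as stated.
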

\begin{proof}
It is easy to show this conclusion by taking advantage of the definition of a $C^{2}$ boundary.

\end{proof}

\begin{lemma}\label{Lemma, singular bdry integral}
Suppose $\O$ is an open, bounded set in $\m{R}^{n}$ with $\p\O\in C^2$, $0\leq a<n-1$, $0\leq b<n-1$, then there exists a constant $C$, only depending on $a, b, n, \O$, such that for any $x,z\in\p\O$,
\bes \int_{\p\O}\frac{dS_{y}}{|x-y|^{a}|y-z|^{b}}\leq \left\{\begin{array}{ll}
C\,|x-z|^{n-1-a-b} & \text{if}\quad a+b>n-1,\\
C & \text{if}\quad a+b<n-1.
\end{array}\right. \ees
\end{lemma}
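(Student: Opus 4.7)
The plan is to reduce the estimate to the classical singular integral bound on a bounded subset of $\m{R}^{n-1}$ by using the $C^{2}$ regularity of $\p\O$ to locally flatten the surface, and then to split the domain of integration according to the relative position of $y$ with respect to $x$ and $z$.

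First I would cover $\p\O$ by finitely many coordinate patches; on each patch the $C^{2}$ regularity (in fact Lipschitz would suffice here) gives a bi-Lipschitz map to a bounded region $U\subset\m{R}^{n-1}$, under which the surface measure $dS_{y}$ is comparable to the $(n-1)$-dimensional Lebesgue measure and the Euclidean distance $|x-y|$ between boundary points is comparable to the Euclidean distance between the parameters. Consequently, it suffices to prove the analogous inequality for integrals of the form $\int_{U}|\tilde{x}-\tilde{y}|^{-a}|\tilde{y}-\tilde{z}|^{-b}\,d\tilde{y}$ over a bounded region $U\subset\m{R}^{n-1}$, with constants depending on the chart data (hence on $\O$).

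Set $r=|x-z|$ and split $\p\O$ into three pieces: $A_{x}=\{y:|y-x|\le r/2\}$, $A_{z}=\{y:|y-z|\le r/2\}$, and $A_{0}=\p\O\setminus(A_{x}\cup A_{z})$. On $A_{x}$, the triangle inequality gives $|y-z|\ge r/2$, so the integrand is bounded by $C\,r^{-b}|x-y|^{-a}$; since $a<n-1$, integrating in local coordinates yields a contribution $\lesssim r^{-b}\cdot r^{n-1-a}=r^{n-1-a-b}$. A symmetric argument bounds the contribution from $A_{z}$. On $A_{0}$, one has $|x-y|$ and $|y-z|$ comparable (each is between $r/2$ and $|x-y|+r$), so the integrand is dominated by $C|x-y|^{-(a+b)}$; then in local coordinates $\int_{|x-y|\ge r/2,\,y\in\p\O}|x-y|^{-(a+b)}\,dS_{y}$ is bounded by $C$ when $a+b<n-1$ (the singularity is integrable and $\O$ is bounded), and by $C\,r^{n-1-a-b}$ when $a+b>n-1$ (the integrand is integrable at infinity and the only relevant scale is $r$).

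The argument is entirely standard; the only mild obstacle is bookkeeping to ensure that the localization constants, the equivalence of distances on each chart, and the handling of patches that contain both $x$ and $z$ versus patches containing only one of them, all combine into a single constant depending only on $a,b,n,\O$. I would absorb these into a finite partition of unity argument subordinate to the chosen cover.
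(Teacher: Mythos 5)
Your proposal is correct. The paper gives no proof of its own here — it simply cites (\cite{Friedman}, Lemma 1, Sec.\ 2, Chap.\ 5) — and your argument (local flattening of the $C^{2}$ boundary plus the three-region split $A_{x}$, $A_{z}$, $A_{0}$ about $x$ and $z$, with the comparability $|x-y|\sim|y-z|$ on $A_{0}$) is precisely the standard proof found in that reference, carried out correctly in both cases $a+b\gtrless n-1$.
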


\begin{proof}
See (\cite{Friedman}, Lemma 1, Sec. 2, Chap. 5).
\end{proof}

The following Lemma is mentioned in \cite{Friedman} and it is an important technique used in Theorem \ref{Thm, exist. for linear simple model}, Theorem \ref{Thm, exist. for nonlin. simple model} and Lemma \ref{Lemma, conv. of approx. sol'n}.

\begin{lemma}\label{Lemma, property of iterative kernels}
Let $K_0:D_{T,\O}\rightarrow\m{R}$ and suppose there is a constant $C$ such that for any $(x,t;y,\tau)\in D_{T,\O}$, 
\begin{align}\label{est. for the kernel of the induction def for bdry fcn}
|K_0(x,t;y,\tau)| \leq C\,(t-\tau)^{-3/4}\,|x-y|^{-(n-3/2)}.
\end{align}
For any $j\geq 1$, we define $K_j:D_{T,\O}\rightarrow\m{R}$ by
\be\label{induction def for kernel of fix pt.}
K_{j}(x,t;y,\tau)\triangleq \int_{\tau}^{t}\int_{\p\O}K_0(x,t;z,\sigma)\,K_{j-1}(z,\sigma;y,\tau)\,dS_{z}\,d\sigma. \ee
Then all the $K_j(j\geq 1)$ are well-defined and the series $\sum_{j=0}^{\infty}|K_j|$ converges uniformly to some function $\wt{K}$ on $D_{T,\O}$. Moreover, there exists some constant $C^{*}$, only depending on $n$, $\O$ and $T$, such that for any $(x,t;y,\tau)\in D_{T,\O}$,
\be\label{est. for accumulated kernel}
\wt{K}(x,t;y,\tau)\leq C^{*}(t-\tau)^{-3/4}\,|x-y|^{-(n-3/2)}.\ee
\end{lemma}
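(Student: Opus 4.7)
The plan is to run a Picard-style iteration, showing inductively that each $K_j$ obeys a bound whose singularities in $(t-\tau)$ and $|x-y|$ strictly weaken with $j$, so that after finitely many steps the bound is regular enough that the tail of the series $\sum K_j$ converges by a standard Volterra estimate with factorial decay.

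First, I would establish inductively a bound
\[|K_j(x,t;y,\tau)| \leq C_j\,(t-\tau)^{-\alpha_j}|x-y|^{-\beta_j},\]
with $\alpha_0 = 3/4$, $\beta_0 = n-3/2$, and $\alpha_j = \alpha_{j-1} - 1/4$, $\beta_j = \beta_{j-1} - 1/2$. The temporal step uses the beta-function identity
\[\int_\tau^t (t-\sigma)^{-3/4}(\sigma-\tau)^{-\alpha_{j-1}}\,d\sigma = B\bigl(\tfrac{1}{4}, 1-\alpha_{j-1}\bigr)(t-\tau)^{1/4-\alpha_{j-1}},\]
which is valid whenever $\alpha_{j-1} < 1$. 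The spatial step uses Lemma~\ref{Lemma, singular bdry integral} with $a = n-3/2$, $b = \beta_{j-1}$, gaining a factor $|x-y|^{1/2-\beta_{j-1}}$ whenever $\beta_{j-1} > 1/2$. Once either exponent becomes non-positive, the corresponding factor becomes uniformly bounded on $D_{\O,T}$, so after finitely many steps (at most $J \leq \max(3, 2n-3)$) the iterate $K_J$ is uniformly bounded by a constant $M_J$ depending only on $n, \O, T$.

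Next, for $j \geq J$ I would bound the tail by iterating with $\int_\tau^t (t-\sigma)^{-3/4}\,d\sigma$ and $\int_{\p\O}|x-z|^{-(n-3/2)}\,dS_z$, both of which are finite (the spatial one by Lemma~\ref{Lemma, singular bdry integral} with $b = 0$). This yields
\[|K_{J+k}(x,t;y,\tau)| \leq M_J \cdot C^k (t-\tau)^{k/4}\prod_{i=1}^{k}B\bigl(\tfrac{1}{4}, 1 + \tfrac{i-1}{4}\bigr);\]
using $B(1/4, s) = \Gamma(1/4)\Gamma(s)/\Gamma(s+1/4)$ and Stirling's asymptotics, this product decays like $(C'T^{1/4})^k/\Gamma(1 + k/4)$, and the series $\sum_{k\geq 0}(C'T^{1/4})^k/\Gamma(1+k/4)$ converges uniformly on $D_{\O,T}$. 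Combining the explicit bounds on the finitely many heads $K_0, \ldots, K_{J-1}$ (each strictly more regular than $K_0$ for $j\geq 1$, hence absorbable into the target bound with a factor depending on $T$ and $\operatorname{diam}(\O)$) with the summable tail gives both the uniform convergence of $\sum|K_j|$ to $\wt K$ and the pointwise bound~(\ref{est. for accumulated kernel}).

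The main obstacle is the borderline case $a + b = n-1$, which is excluded from Lemma~\ref{Lemma, singular bdry integral} but arises in the induction at the step where $\beta_{j-1} = 1/2$ (immediately for $n = 2$, after two or more steps otherwise). At that critical step the spatial integral produces at worst a logarithmic factor $\log(1/|x-y|)$, which is dominated by any arbitrarily small negative power $|x-y|^{-\varepsilon}$; this can be accommodated by replacing $\beta_j$ with $\beta_j + \varepsilon$ for a small $\varepsilon > 0$ fixed once at the outset of the induction, producing only a negligible degradation of the exponents and leaving the inductive scheme intact. The remaining bookkeeping of beta-function constants is routine once the factorial structure is identified.
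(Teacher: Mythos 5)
Your proposal is correct and is essentially the argument the paper invokes: the paper's proof simply defers to Friedman's classical iterated-kernel estimate (pp.\ 14--15 and Theorem 2, Sec.\ 3, Chap.\ 5 of \cite{Friedman}), which is exactly the scheme you reconstruct --- singularities weakening by $1/4$ in time and $1/2$ in space per iteration until the iterates are bounded, followed by the telescoping beta-function product $\Gamma(1/4)^k/\Gamma(1+k/4)$ giving Mittag--Leffler-type summability of the tail. Your handling of the borderline case $a+b=n-1$ via a small $\varepsilon$ (absorbable since $\O$ is bounded) is the standard and correct fix, so no gap remains.
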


\begin{proof}
We can mimic the arguments from Page 14 to Page 15 in \cite{Friedman} to prove this Lemma, provided we take advantage of (\ref{est. for the kernel of the induction def for bdry fcn}) and Lemma \ref{Lemma, singular bdry integral}. Also see the proof of Theorem 2 in (\cite{Friedman}, Sec. 3, Chap. 5).
\end{proof}

\begin{lemma}\label{soln to inhomo. eq.}
If $f\in C^{\alpha,\alpha/2}\big(\ol{\O}\times[0,T]\big)$ and 
\[W(x,t)\triangleq \int_{0}^{t}\int_{\O}\Phi(x-y,t-\tau)\,f(y,\tau)\,dS_{y}\,d\tau, \quad\forall\, (x,t)\in\ol{\O}\times[0,T],\]
then $W\in C^{2,1}\big(\O\times(0,T]\big)$ and 
\[(W_t-\Delta W)(x,t)=f(x,t), \quad\forall\, (x,t)\in\O\times(0,T].\]
\end{lemma}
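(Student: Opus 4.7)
The plan is to introduce the time-truncated potential
\[W^{\v}(x,t)\triangleq\int_{0}^{t-\v}\int_{\O}\Phi(x-y,t-\tau)\,f(y,\tau)\,dy\,d\tau\]
for small $\v>0$, on which differentiation under the integral sign is unproblematic because the integrand is jointly smooth in $(x,t)$ on the region of integration. Using $\Phi_{t}=\Delta_{x}\Phi$ away from the origin, a direct calculation yields
\[(W^{\v}_{t}-\Delta W^{\v})(x,t)=\int_{\O}\Phi(x-y,\v)\,f(y,t-\v)\,dy,\]
whose right-hand side tends to $f(x,t)$ as $\v\to 0^{+}$ for every $x\in\O$, by the standard approximate-identity property of the heat kernel together with continuity of $f$. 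It then remains to verify that each of $W^{\v}$, $DW^{\v}$, $\Delta W^{\v}$, and $W^{\v}_{t}$ converges as $\v\to 0^{+}$ to the corresponding derivative of the full potential $W$, locally uniformly on $\O\times(0,T]$.

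The convergence $W^{\v}\to W$ and $DW^{\v}\to DW$ will follow from the pointwise bounds $\int_{\O}\Phi(x-y,t-\tau)\,dy\le 1$ and $\int_{\O}|D_{x}\Phi(x-y,t-\tau)|\,dy\le C(t-\tau)^{-1/2}$, both integrable in $\tau$ up to $t$, via dominated convergence. The real obstacle is the second spatial derivative: $\int_{\O}|\p^{2}\Phi|\,dy$ is only of order $(t-\tau)^{-1}$, which is not integrable, and here the H\"older hypothesis is decisive. I would treat $\Delta W^{\v}$ by the standard freezing splitting
\[\int_{0}^{t-\v}\!\int_{\O}\p^{2}_{x_{i}x_{j}}\Phi(x-y,t-\tau)\,f(y,\tau)\,dy\,d\tau=I^{\v}(x,t)+II^{\v}(x,t),\]
with $I^{\v}$ obtained by replacing $f(y,\tau)$ by $f(y,\tau)-f(x,\tau)$, and $II^{\v}$ carrying the residual $f(x,\tau)\int_{\O}\p^{2}_{x_{i}x_{j}}\Phi\,dy$.

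In $I^{\v}$ the estimate $|f(y,\tau)-f(x,\tau)|\le [f]_{\alpha}\,|x-y|^{\alpha}$ combined with the Gaussian decay of $\p^{2}\Phi$ gives, after scaling $z=(x-y)/\sqrt{t-\tau}$, an integrand whose $y$-integral is bounded by $C(t-\tau)^{-1+\alpha/2}$, now integrable in $\tau$ up to $t$; hence $I^{\v}$ converges uniformly on compact subsets of $\O\times(0,T]$. In $II^{\v}$, writing $\p^{2}_{x_{i}x_{j}}\Phi=\p^{2}_{y_{i}y_{j}}\Phi$ and integrating by parts twice in $y$ collapses the inner integral to a boundary integral on $\p\O$; since $\mathrm{dist}(x,\p\O)$ is bounded below on any compact $K\subset\O$, this boundary expression is uniformly bounded in $\tau\in[0,t]$, and $II^{\v}\to II^{0}$ by bounded convergence. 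Combining these two pieces yields $\Delta W^{\v}\to \Delta W$ locally uniformly, and the same splitting applied to difference quotients of $\p^{2}_{x_{i}x_{j}}W$ delivers continuity of the second derivatives, establishing $W\in C^{2,1}(\O\times(0,T])$ together with $W_{t}-\Delta W=f$. The main obstacle, as indicated, is the nonintegrable $(t-\tau)^{-1}$ singularity produced by the second-order derivatives of $\Phi$, whose removal depends essentially on the exponent $\alpha>0$ supplying the $|x-y|^{\alpha}$ correction.
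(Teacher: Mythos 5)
Your argument is correct and is precisely the classical volume-potential proof (time truncation, the approximate-identity limit for the commutator term, and the H\"older-based freezing decomposition for the second derivatives) that the paper itself does not reproduce but delegates to Friedman's Theorem 9, Sec.\ 5, Chap.\ 1. Nothing is missing: the only singular piece, the $(t-\tau)^{-1}$ from $\p^2\Phi$, is correctly tamed by the $|x-y|^{\alpha}$ correction in $I^{\v}$ and by reduction to a harmless boundary integral in $II^{\v}$, and noting that $W_t=\Delta W+f$ then gives the continuity of $W_t$ needed for $W\in C^{2,1}(\O\times(0,T])$.
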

\begin{proof}
See (\cite{Friedman}, Theorem 9, Sec. 5, Chap. 1). 
\end{proof}

Now based on the arguments in (\cite{Friedman}, Theorem 2, Sec. 3, Chap. 5), we can prove the following existence theorem.

\begin{theorem}\label{Thm, exist. for linear simple model}
For any $T>0$, there exists a solution $u\in\mathcal{A}_{T}$ to (\ref{Linear Prob, Simple Model}) on $\ol{\O}\times[0,T]$. 
\end{theorem}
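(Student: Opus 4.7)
The plan is to construct $u$ via a single-layer potential ansatz and reduce the boundary condition to a Volterra-type integral equation on $\p\O$. Specifically, I would seek
\[
u(x,t) = W(x,t) + V(x,t),
\]
where
\[
W(x,t) = \int_{\O}\Phi(x-y,t)\,\psi(y)\,dy + \int_{0}^{t}\!\int_{\O}\Phi(x-y,t-\tau)\,f(y,\tau)\,dy\,d\tau
\]
accounts for the initial data and the forcing, and
\[
V(x,t) = \int_{0}^{t}\!\int_{\p\O}\Phi(x-y,t-\tau)\,\varphi(y,\tau)\,dS_{y}\,d\tau
\]
is a single-layer potential whose density $\varphi\in\mathcal{B}_{T}$ is to be determined. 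By Lemma \ref{soln to inhomo. eq.} and standard properties of heat potentials, $W\in\mathcal{A}_T$, $W_t-\Delta W=f$ in $\O\times(0,T]$, $W(\cdot,0)=\psi$ in $\O$, and $\p W/\p n$ is continuous on $\p\O\times[\tau_0,T]$ for every $\tau_0>0$. Since $\Phi$ is smooth off the diagonal, $V$ solves the heat equation in $\O\times(0,T]$ and vanishes as $t\to 0^+$, so $u=W+V$ automatically fulfills the PDE and the initial condition.

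To enforce the Robin condition, I would take the inward normal limit of $V$ using Corollary \ref{Cor, Jump Relation, Adapted to whole bdry}, which gives, for $(x,t)\in(\Gamma_1\cup\Gamma_2)\times(0,T]$,
\[
\frac{\p V}{\p n}(x,t) = \int_{0}^{t}\!\int_{\p\O}(D\Phi)(x-y,t-\tau)\cdot\ora{n}(x)\,\varphi(y,\tau)\,dS_{y}\,d\tau + \tfrac{1}{2}\varphi(x,t).
\]
Substituting into $\p u/\p n+\beta u=g$ and multiplying by $2$ produces the integral equation
\[
\varphi(x,t) = \int_{0}^{t}\!\int_{\p\O}K_{0}(x,t;y,\tau)\,\varphi(y,\tau)\,dS_{y}\,d\tau + H(x,t),
\]
with
\[
K_{0}(x,t;y,\tau) = -2(D\Phi)(x-y,t-\tau)\cdot\ora{n}(x) - 2\beta(x,t)\,\Phi(x-y,t-\tau),
\]
and $H=2\bigl[g-\p W/\p n-\beta W\bigr]$. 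By Lemma \ref{Lemma, geom. prop. of C^2 bdry}, $|(x-y)\cdot\ora{n}(x)|\leq C|x-y|^{2}$; combining this with a routine bound on the Gaussian factor yields the estimate (\ref{est. for the kernel of the induction def for bdry fcn}) that Lemma \ref{Lemma, property of iterative kernels} requires. Defining the iterated kernels by (\ref{induction def for kernel of fix pt.}), Lemma \ref{Lemma, property of iterative kernels} provides a uniformly convergent resolvent $\wt K=\sum_{j\geq 0}K_{j}$ with the bound (\ref{est. for accumulated kernel}), and the unique fixed point is
\[
\varphi(x,t) = H(x,t) + \int_{0}^{t}\!\int_{\p\O}\wt K(x,t;y,\tau)\,H(y,\tau)\,dS_{y}\,d\tau.
\]
Plugging this $\varphi$ back into the ansatz produces the candidate $u$, and the boundary identity $\p u/\p n+\beta u=g$ on $(\Gamma_1\cup\Gamma_2)\times(0,T]$ then holds by construction.

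The main obstacle is twofold. First, to conclude $\varphi\in\mathcal{B}_{T}$ one must verify that both $H$ and the resolvent convolution are uniformly continuous on each $\Gamma_{i}\times(0,T]$ \emph{separately}; this is the reason the space $\mathcal{B}_{T}$ only asks for UC on each piece, since $g$ and $\beta$ themselves may jump across $\wt\Gamma$. I would handle this by observing that $H|_{\Gamma_{i}}$ is UC (using $g_{i},\beta_{i}\in C(\ol\Gamma_{i}\times[0,T])$ and the interior regularity of $\p W/\p n$), and then inductively show that the operator $\varphi\mapsto\int_{0}^{t}\!\int_{\p\O}K_{0}\varphi\,dS\,d\tau$ preserves this class thanks to the integrability guaranteed by Lemma \ref{Lemma, singular bdry integral}. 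Second, one has to check the interface identity (\ref{interface bdry deri. for linear model}) on $\wt\Gamma\times(0,T]$: at $x\in\wt\Gamma$ I would split $\int_{\p\O}$ into $\int_{\Gamma_{1}}+\int_{\Gamma_{2}}$ and invoke Theorem \ref{Thm, Jump Relation, Adapted} separately for $i=1,2$, producing two jump terms $\tfrac14\varphi_{1}$ and $\tfrac14\varphi_{2}$ whose sum is $\tfrac12(\varphi_{1}+\varphi_{2})$; together with the analogous splitting of the $\beta u$ and $g$ contributions, this reproduces precisely the averaged identity (\ref{interface bdry deri. for linear model}). The remaining regularity $u\in C(\ol\O\times[0,T])$ and the pointwise limit $u(\cdot,0)=\psi$ in $\O$ follow from the uniform boundedness of $\varphi$ and the fact that $V(x,t)\to 0$ as $t\to 0^{+}$ uniformly in $x\in\ol\O$.
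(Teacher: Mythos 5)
Your overall architecture coincides with the paper's: a single-layer potential ansatz, reduction of the Robin condition via the jump relation to a Volterra equation for the density $\varphi$, the kernel bound from Lemma \ref{Lemma, geom. prop. of C^2 bdry}, the resolvent series from Lemma \ref{Lemma, property of iterative kernels}, and the interface identity (\ref{interface bdry deri. for linear model}) obtained by splitting $\int_{\p\O}$ into $\int_{\Gamma_1}+\int_{\Gamma_2}$ and applying Theorem \ref{Thm, Jump Relation, Adapted} on each piece. However, there is one genuine gap: you take the initial-data potential as $\int_{\O}\Phi(x-y,t)\psi(y)\,dy$, integrating over $\O$ itself. Then for $x\in\p\O$ one has, after integrating by parts,
\begin{equation*}
D_x\!\int_{\O}\Phi(x-y,t)\,\psi(y)\,dy\cdot\ora{n}(x)
=-\int_{\p\O}\Phi(x-y,t)\,\psi(y)\,\ora{n}(y)\cdot\ora{n}(x)\,dS_{y}
+\int_{\O}\Phi(x-y,t)\,D\psi(y)\,dy\cdot\ora{n}(x),
\end{equation*}
and the first term behaves like $-(4\pi t)^{-1/2}\psi(x)$ as $t\rightarrow 0^{+}$ (the Gaussian concentrates on the $(n-1)$-dimensional surface through $x$). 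Hence your $H=2[g-\p W/\p n-\beta W]$ is \emph{unbounded} near $t=0$ unless $\psi$ vanishes on $\p\O$, so $H\notin\mathcal{B}_{T}$ and the iteration scheme you invoke (which presupposes a bounded, uniformly continuous source term) does not apply; your remark that $\p W/\p n$ is continuous on $\p\O\times[\tau_0,T]$ for each $\tau_0>0$ sidesteps exactly the endpoint $t\to 0^{+}$ where the failure occurs. A second symptom of the same problem: for $x\in\p\O$ the volume potential over $\O$ tends to $\tfrac12\psi(x)$ as $t\to0^{+}$, while $V(x,t)\to0$ if $\varphi$ is bounded, so your $u$ would fail to be continuous at $\p\O\times\{0\}$ and hence would not lie in $\mathcal{A}_{T}$.

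The paper's fix is to first extend $\psi$ to a function in $C^{1}(\ol{\O}_1)$ for an open set $\O_1\supset\ol{\O}$ and to use $\int_{\O_1}\Phi(x-y,t)\psi(y)\,dy$ in the ansatz (\ref{sol'n for simple model, conjectured form}). Then the boundary term produced by integration by parts lives on $\p\O_1$, which is at positive distance from $\p\O$, so it is bounded (indeed continuous on $\p\O\times[0,T]$, see (\ref{integration by parts for initial fcn})), giving $H\in\mathcal{B}_{T}$; and since every $x\in\ol{\O}$ is interior to $\O_1$, the extended potential converges to $\psi(x)$ uniformly on $\ol{\O}$ as $t\to0^{+}$, which restores $u\in C(\ol{\O}\times[0,T])$. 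With this modification the rest of your argument goes through exactly as in the paper.
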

\begin{proof}
We will construct a solution $u$ to (\ref{Linear Prob, Simple Model}). Firstly, since $\psi\in C^{1}(\overline{\O})$ and $\p\O\in C^{2}$, one can extends $\psi$ to a larger domain. More precisely, there exists an open set $\O_1\supset\ol{\O}$ and $\psi_1\in C^{1}(\ol{\O}_1)$ such that $\psi_1$ agrees with $\psi$ on $\ol{\O}$. In the rest of the proof, for convenience, we just write $\psi_1$ to be $\psi$ and therefore $\psi\in C^{1}(\ol{\O}_1)$. We are looking for a solution $u$ in the following form: for any $(x,t)\in \ol{\O}\times[0,T]$,
\be\label{sol'n for simple model, conjectured form}\begin{split}
u(x,t)\triangleq & \int_{\O_1}\Phi(x-y,t)\,\psi(y)\,dy+\int_{0}^{t}\int_{\O}\Phi(x-y,t-\tau)\,f(y,\tau)\,dy\,d\tau\\
&+\int_{0}^{t}\int_{\p\O}\Phi(x-y,t-\tau)\,\varphi(y,\tau)\,dS_{y}\,d\tau, 
\end{split}\ee
where $\varphi\in \mathcal{B}_{T}$ will be determined later. 

Due to Lemma \ref{soln to inhomo. eq.}, it is readily to see that the function $u$ defined in (\ref{sol'n for simple model, conjectured form}) belongs to $\mathcal{A}_{T}$ and satisfies the first and the third equations in (\ref{Linear Prob, Simple Model}), so in order to verify $u$ to be the solution, the only things left to check are
\be\label{bdry cond., linear prob}
\frac{\p u}{\p n}(x,t)+\beta(x,t) u(x,t)=g(x,t), \quad\forall (x,t)\in(\Gamma_1\cup\Gamma_2)\times(0,T] \ee 
and 
\be\label{bdry cond., interface, linear prob}
\frac{\p u}{\p n}(x,t)+ \frac{1}{2}\,\big[\beta_1(x,t)+\beta_2(x,t)\big]u(x,t)=\frac{1}{2}\,\big[g_1(x,t)+g_2(x,t)\big], \quad\forall\, (x,t)\in\wt{\Gamma}\times(0,T].\ee 
The plan is to firstly find a function $\varphi\in\mathcal{B}_{T}$ such that $u$ defined in (\ref{sol'n for simple model, conjectured form}) satisfies (\ref{bdry cond., linear prob}), then we will prove this $u$ satisfies (\ref{bdry cond., interface, linear prob}) as well.

By (\ref{Normal Deri. Def.}) and (\ref{sol'n for simple model, conjectured form}), for any $(x,t)\in (\Gamma_1\cup\Gamma_2)\times(0,T]$,
\begin{align*}
\frac{\p u}{\p n}(x,t) = &\lim_{h\rightarrow 0^{+}} Du\big(x_h,t\big)\cdot\ora{n}(x) \\
=&\lim_{h\rightarrow 0^{+}}\bigg[\int_{\O_1}(D\Phi)(x_h-y,t)\cdot\ora{n}(x)\,\psi(y)\,dy\\
&+\int_{0}^{t}\int_{\O}(D\Phi)(x_h-y,t-\tau)\cdot\ora{n}(x)\,f(y,\tau)\,dy\,d\tau\\
&+\int_{0}^{t}\int_{\p\O}(D\Phi)(x_h-y,t-\tau)\cdot\ora{n}(x)\,\varphi(y,\tau)\,dS_{y}\,d\tau\bigg].
\end{align*}
Applying the Lebesgue's dominated convergence theorem and Corollary \ref{Cor, Jump Relation, Adapted to whole bdry}, we get 
\begin{align}\label{normal deri. for regular bdry pts}
\frac{\p u}{\p n}(x,t)=& \int_{\O_1}(D\Phi)(x-y,t)\cdot\ora{n}(x)\,\psi(y)\,dy+\int_{0}^{t}\int_{\O}(D\Phi)(x-y,t-\tau)\cdot\ora{n}(x)\,f(y,\tau)\,dy\,d\tau \nonumber\\
&+ \int_{0}^{t}\int_{\p\O}(D\Phi)(x-y,t-\tau)\cdot\ora{n}(x)\,\varphi(y,\tau)\,dS_{y}\,d\tau+\frac{1}{2}\varphi(x,t).
\end{align}
Therefore, (\ref{bdry cond., linear prob}) is reduced to for any $(x,t)\in (\Gamma_1\cup\Gamma_2)\times(0,T]$,
\be\label{fixed pt. eq. for bdry fcn}
\varphi(x,t)=\int_{0}^{t}\int_{\p\O}K(x,t;y,\tau)\,\varphi(y,\tau)\,dS_{y}\,d\tau + H(x,t),\ee
where 
\[K(x,t;y,\tau)=-2\Big[(D\Phi)(x-y,t-\tau)\cdot\ora{n}(x)+\beta(x,t)\,\Phi(x-y,t-\tau)\Big]\]
and 
\begin{align*}
H(x,t)= &-2\int_{\O_1}\Big[(D\Phi)(x-y,t)\cdot\ora{n}(x)+\beta(x,t)\,\Phi(x-y,t)\Big]\psi(y)\,dy\\
&-2\int_{0}^{t}\int_{\O}\Big[(D\Phi)(x-y,t-\tau)\cdot\ora{n}(x)+\beta(x,t)\,\Phi(x-y,t-\tau)\Big]f(y,\tau)\,dy\,d\tau\\
&+2\,g(x,t).
\end{align*}
Thus, the proof of (\ref{bdry cond., linear prob}) becomes the search for a fixed point $\varphi\in \mathcal{B}_{T}$ of (\ref{fixed pt. eq. for bdry fcn}).

In the following, we will construct a fixed point $\varphi\in \mathcal{B}_{T}$ of (\ref{fixed pt. eq. for bdry fcn}). By Lemma \ref{Lemma, geom. prop. of C^2 bdry}, we get for any $x,y\in\p\O$, $0\leq \tau<t\leq T$,
\begin{align}\label{est. for the kernel of the induction def for bdry fcn, linear}
|K(x,t;y,\tau)| &\leq C\,\bigg[\frac{1}{(t-\tau)^{n/2}}+\frac{|x-y|^2}{(t-\tau)^{n/2+1}}\bigg]\,e^{-\frac{|x-y|^2}{4(t-\tau)}} \nonumber\\
&\leq C\,(t-\tau)^{-3/4}\,|x-y|^{-(n-3/2)}.
\end{align}
Then using the fact $\psi\in C^{1}(\ol{\O}_1)$ and the integration by parts, we obtain
\be\label{integration by parts for initial fcn}\begin{split}
&\int_{\O_1}(D\Phi)(x-y,t)\cdot\ora{n}(x)\,\psi(y)\,dy=-\int_{\O_1}D_{y}\big[\Phi(x-y,t)\big]\,\psi(y)\,dy\cdot\ora{n}(x)\\
=&-\int_{\p\O_1}\Phi(x-y,t)\,\psi(y)\,\ora{n}(y)\,dy\cdot\ora{n}(x)+\int_{\O_1}\Phi(x-y,t)\,D\psi(y)\,dy\cdot\ora{n}(x). \end{split}\ee
Consequently, as a function in $(x,t)$, 
\[\int_{\O_1}(D\Phi)(x-y,t)\cdot\ora{n}(x)\,\psi(y)\,dy\in C\big(\p\O\times[0,T]\big)\subset\mathcal{B}_{T}.\] 
Then it is readily to check that $H\in\mathcal{B}_{T}$.

Next, we define $\varphi_0(x,t)= H(x,t)$ on $(\Gamma_1\cup\Gamma_2)\times(0,T]$ and for any $j\geq 1$, we define $\varphi_j:(\Gamma_1\cup\Gamma_2)\times(0,T]\rightarrow\m{R}$ by
\be\label{induction def for bdry fcn}
\varphi_{j}(x,t)= \int_{0}^{t}\int_{\p\O}K(x,t;y,\tau)\,\varphi_{j-1}(y,\tau)\,dS_{y}\,d\tau+H(x,t).\ee
Because of (\ref{est. for the kernel of the induction def for bdry fcn}), we can prove by induction that for any $j\geq 0$, $\varphi_{j}$ is well defined and belongs to $\mathcal{B}_{T}$. The next goal is to show that $\varphi_{j}(x,t)$ uniformly converges to some function $\varphi(x,t)$ on $(\Gamma_1\cup\Gamma_2)\times(0,T]$ as $j\rightarrow\infty$, which makes $\varphi$ to be the fixed point of (\ref{fixed pt. eq. for bdry fcn}) in $\mathcal{B}_{T}$. 

To show the uniform convergence of $\{\varphi_j\}_{j\geq 0}$, we define a sequence of functions $\{K_j\}_{j\geq 0}$ on $D_{\O,T}$ as following. For any $(x,t;y,\tau)\in D_{\O,T}$,
\[K_0(x,t;y,\tau)\triangleq K(x,t;y,\tau).\]
For any $j\geq 1$ and $(x,t;y,\tau)\in D_{\O,T}$, 
\be\label{induction def for kernel of fix pt., linear}
K_{j}(x,t;y,\tau)\triangleq \int_{\tau}^{t}\int_{\p\O}K(x,t;z,\sigma)\,K_{j-1}(z,\sigma;y,\tau)\,dS_{z}\,d\sigma. \ee

Based on (\ref{induction def for bdry fcn}) and (\ref{induction def for kernel of fix pt., linear}), again by induction, one can prove that for any $j\geq 1$ and for any $(x,t)\in (\Gamma_1\cup\Gamma_2)\times(0,T]$,
\[\varphi_{j}(x,t)=\varphi_{j-1}(x,t)+\int_{0}^{t}\int_{\p\O}K_{j-1}(x,t;y,\tau)\,H(y,\tau)\,dS_{y}\,d\tau,\]
which implies 
\be\label{explicit formula for varphi_j}
\varphi_{j}(x,t)=\sum_{i=0}^{j-1}\int_{0}^{t}\int_{\p\O}K_{i}(x,t;y,\tau)\,H(y,\tau)\,dS_{y}\,d\tau+H(x,t).\ee
Writing
\be\label{def for accumulated kernel, linear}
K^{*}(x,t;y,\tau)\triangleq \sum_{j=0}^{\infty}K_{j}(x,t;y,\tau), \ee
by Lemma \ref{Lemma, property of iterative kernels}, we know $K^{*}$ is well-defined and $\sum_{j=0}^{\infty}K_j$ converges uniformly to $K^{*}$ on $D_{T,\O}$. Moreover, there exists a constant $C^{*}=C^{*}(n,\O,T)$ such that
for any $(x,t;y,\tau)\in D_{\O,T}$,
\be\label{est for accumulated kernel, linear}
|K^{*}(x,t;y,\tau)|\leq C^{*}(t-\tau)^{-3/4}\,|x-y|^{-(n-3/2)}.\ee
Consequently, it follows from (\ref{explicit formula for varphi_j}) and (\ref{def for accumulated kernel, linear}) that $\varphi_j$ converges uniformly to the function $\varphi$ on $(\Gamma_1\cup\Gamma_2)\times(0,T]$,  where 
\begin{align}\label{explicit formula with accumulated kernel, linear}
\varphi(x,t) \triangleq \int_{0}^{t}\int_{\p\O}K^{*}(x,t;y,\tau)\,H(y,\tau)\,dS_{y}\,d\tau+H(x,t), \quad\forall\, (x,t)\in (\Gamma_1\cup\Gamma_2)\times(0,T].
\end{align}
Thus, $\varphi$ is a fixed point of (\ref{fixed pt. eq. for bdry fcn}) in $\mathcal{B}_{T}$ and therefore the function $u$ defined in (\ref{sol'n for simple model, conjectured form}) satisfies (\ref{bdry cond., linear prob}).

Now as our plan, it only left to confirm this function $u$ satisfies (\ref{bdry cond., interface, linear prob}) as well. Making use of (\ref{sol'n for simple model, conjectured form}), (\ref{Normal Deri. Def.}) and Theorem \ref{Thm, Jump Relation, Adapted}, we get for any $x\in\wt{\Gamma}$, $0<t\leq T$, $\frac{\p u}{\p n}(x,t)$ exists and
\begin{align}\label{normal deri. for interface pts}
\frac{\p u}{\p n}(x,t)=& \int_{\O_1}(D\Phi)(x-y,t)\cdot\ora{n}(x)\,\psi(y)\,dy+\int_{0}^{t}\int_{\O}(D\Phi)(x-y,t-\tau)\cdot\ora{n}(x)\,f(y,\tau)\,dy\,d\tau \nonumber\\
&+ \int_{0}^{t}\int_{\p\O}(D\Phi)(x-y,t-\tau)\cdot\ora{n}(x)\,\varphi(y,\tau)\,dS_{y}\,d\tau+\frac{1}{4}\,\varphi_1(x,t)+\frac{1}{4}\,\varphi_2(x,t).
\end{align}
Then we choose two sequences of points $\{\xi_{k}\}_{k\geq 1}\subset\Gamma_1$ and $\{z_{j}\}_{j\geq 1}\subset\Gamma_2$ which converge to $x$, it follows from (\ref{normal deri. for regular bdry pts}) that
\bes\begin{split}
\frac{\p u}{\p n}(\xi_{k},t)=& \int_{\O_1}(D\Phi)(\xi_{k}-y,t)\cdot\ora{n}(\xi_{k})\,\psi(y)\,dy \\
&+\int_{0}^{t}\int_{\O}(D\Phi)(\xi_{k}-y,t-\tau)\cdot\ora{n}(\xi_{k})\,f(y,\tau)\,dy\,d\tau \\
&+ \int_{0}^{t}\int_{\p\O}(D\Phi)(\xi_{k}-y,t-\tau)\cdot\ora{n}(\xi_{k})\,\varphi(y,\tau)\,dS_{y}\,d\tau+\frac{1}{2}\,\varphi(\xi_{k},t) 
\end{split}\ees
and
\bes\begin{split}
\frac{\p u}{\p n}(z_{j},t)=& \int_{\O_1}(D\Phi)(z_{j}-y,t)\cdot\ora{n}(z_{j})\,\psi(y)\,dy \\
&+\int_{0}^{t}\int_{\O}(D\Phi)(z_{j}-y,t-\tau)\cdot\ora{n}(z_{j})\,f(y,\tau)\,dy\,d\tau \\ 
&+ \int_{0}^{t}\int_{\p\O}(D\Phi)(z_{j}-y,t-\tau)\cdot\ora{n}(z_{j})\,\varphi(y,\tau)\,dS_{y}\,d\tau+\frac{1}{2}\,\varphi(z_{j},t).
\end{split}\ees
Taking $k\rightarrow\infty$ and $j\rightarrow\infty$, we obtain
\be\label{normal deri. for pts on Gamma 1}
\begin{split}
\lim_{k\rightarrow\infty}\frac{\p u}{\p n}(\xi_{k},t)=& \int_{\O_1}(D\Phi)(x-y,t)\cdot\ora{n}(x)\,\psi(y)\,dy \\
&+\int_{0}^{t}\int_{\O}(D\Phi)(x-y,t-\tau)\cdot\ora{n}(x)\,f(y,\tau)\,dy\,d\tau \\
&+ \int_{0}^{t}\int_{\p\O}(D\Phi)(x-y,t-\tau)\cdot\ora{n}(x)\,\varphi(y,\tau)\,dS_{y}\,d\tau+\frac{1}{2}\,\varphi_1(x,t) 
\end{split}\ee
and
\be\label{normal deri. for pts on Gamma 2}
\begin{split}
\lim_{j\rightarrow\infty}\frac{\p u}{\p n}(z_{j},t)=& \int_{\O_1}(D\Phi)(x-y,t)\cdot\ora{n}(x)\,\psi(y)\,dy \\
&+\int_{0}^{t}\int_{\O}(D\Phi)(x-y,t-\tau)\cdot\ora{n}(x)\,f(y,\tau)\,dy\,d\tau \\ 
&+ \int_{0}^{t}\int_{\p\O}(D\Phi)(x-y,t-\tau)\cdot\ora{n}(x)\,\varphi(y,\tau)\,dS_{y}\,d\tau+\frac{1}{2}\,\varphi_2(x,t).
\end{split}\ee
Adding (\ref{normal deri. for pts on Gamma 1}) and (\ref{normal deri. for pts on Gamma 2}) together and noticing (\ref{normal deri. for interface pts}), we attain
\be\label{normal deri. at interface pts is half of the sum of two sides' limits}
\frac{\p u}{\p n}(x,t)=\frac{1}{2}\Big[\lim_{k\rightarrow\infty}\frac{\p u}{\p n}(\xi_{k},t)+\lim_{j\rightarrow\infty}\frac{\p u}{\p n}(z_{j},t)\Big].\ee
Moreover, since $u$ satisfies (\ref{bdry cond., linear prob}), we have
\begin{align*}
\frac{\p u}{\p n}(\xi_{k},t)+\beta(\xi_{k},t)\,u(\xi_{k},t) &=g(\xi_{k},t), \\
\frac{\p u}{\p n}(z_{j},t)+\beta(z_{j},t)\,u(z_{j},t) &=g(z_{j},t).
\end{align*}
Sending $k\rightarrow\infty$ and $j\rightarrow\infty$, we obtain
\begin{align}\label{normal deri. limit on Gamma 1}
\lim_{k\rightarrow\infty}\frac{\p u}{\p n}(\xi_{k},t) &= g_1(x,t)-\beta_1(x,t)\,u(x,t)\\
\label{normal deri. limit on Gamma 2}
\lim_{j\rightarrow\infty}\frac{\p u}{\p n}(z_{j},t) &=
g_2(x,t)-\beta_2(x,t)\,u(x,t)
\end{align}
Combining (\ref{normal deri. at interface pts is half of the sum of two sides' limits}), (\ref{normal deri. limit on Gamma 1}) and (\ref{normal deri. limit on Gamma 2}) together, (\ref{bdry cond., interface, linear prob}) follows. 
\end{proof}

Next, we will prove the comparison principle and the uniqueness of the solution by applying Theorem \ref{Thm, exist. for linear simple model}. But before that, let's prove the following easier comparison result.

\begin{lemma}\label{Lemma, weak comparison for linear prob.}
Suppose in (\ref{Linear Prob, Simple Model}), $f\geq 0$ on $\ol{\O}\times[0,T]$, $\psi>0$ on $\ol{\O}$ and $$\inf\limits_{(\Gamma_1\cup\Gamma_2)\times[0,T]}\,g>0,$$ then the solution $u>0$ on $\ol{\O}\times[0,T]$.
\end{lemma}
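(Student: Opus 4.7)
The plan is a contradiction argument based on the first time at which $u$ stops being strictly positive. By the hypotheses $\psi>0$ on the compact set $\ol{\O}$ and $u\in C(\ol{\O}\times[0,T])$, I first set
\[t_0\triangleq\inf\{t\in[0,T]:\exists\,x\in\ol{\O}\text{ with }u(x,t)\leq 0\},\]
with the convention $t_0=+\infty$ if the set is empty. Continuity of $u$ together with $\min_{\ol{\O}}\psi>0$ gives $t_0>0$, and if $t_0>T$ we are done. So suppose for contradiction that $t_0\leq T$. A standard compactness/continuity argument produces a point $x_0\in\ol{\O}$ with $u(x_0,t_0)=0$, while $u>0$ throughout $\ol{\O}\times[0,t_0)$ by the minimality of $t_0$.

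Next I would rule out $x_0\in\O$ via the strong parabolic minimum principle applied to $u_t-\Delta u=f\geq 0$. If $(x_0,t_0)$ were an interior minimum of value $0$, then $u$ would be identically $0$ on $\ol{\O}\times[0,t_0]$, contradicting $u(\cdot,0)=\psi>0$. The same reasoning excludes any interior zero of $u$ in $\O\times(0,t_0]$, so we may assume
\[u>0\text{ on }\O\times(0,t_0],\qquad u(x_0,t_0)=0,\qquad x_0\in\p\O.\]
Since $\p\O\in C^{2}$, the interior sphere condition holds at $x_0$, so the parabolic Hopf lemma applies to the limit defining $\frac{\p u}{\p n}(x_0,t_0)$ in (\ref{Normal Deri. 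Def.}) and gives
\[\frac{\p u}{\p n}(x_0,t_0)<0.\]

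Finally I split on the location of $x_0$ on $\p\O$. If $x_0\in\Gamma_1\cup\Gamma_2$, the Robin condition in (\ref{Linear Prob, Simple Model}) combined with $u(x_0,t_0)=0$ reduces to $g(x_0,t_0)=\frac{\p u}{\p n}(x_0,t_0)<0$, contradicting the hypothesis $\inf_{(\Gamma_1\cup\Gamma_2)\times[0,T]}g>0$. If instead $x_0\in\wt{\Gamma}$, the interface identity (\ref{interface bdry deri. for linear model}), built into Definition \ref{Def, soln to Linear Prob.}, combined with $u(x_0,t_0)=0$ yields
\[\frac{\p u}{\p n}(x_0,t_0)=\tfrac{1}{2}\bigl[g_1(x_0,t_0)+g_2(x_0,t_0)\bigr]>0,\]
again contradicting the Hopf inequality. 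Either way we reach a contradiction, so $t_0>T$ and $u>0$ on $\ol{\O}\times[0,T]$.

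The main obstacle is the interface case $x_0\in\wt{\Gamma}$, where the Robin coefficient $\beta$ and the data $g$ are only required to lie in $\mathcal{B}_{T}$ and may jump across $\wt{\Gamma}$; the classical Hopf argument has to be matched against the two-sided identity (\ref{interface bdry deri. for linear model}). Fortunately, Hopf's lemma uses only the interior PDE and the interior sphere property at $x_0$, both of which persist here, so the boundary data enter only after $\frac{\p u}{\p n}(x_0,t_0)$ has been pinned down to be negative; this makes the contradiction go through cleanly without needing any sign assumption on $\beta$.
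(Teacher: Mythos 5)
Your proof is correct and follows essentially the same route as the paper's: locate the first zero of $u$, push it off the interior via the strong minimum principle, and contradict the boundary condition at the resulting boundary minimum, treating $x_0\in\Gamma_1\cup\Gamma_2$ and $x_0\in\wt{\Gamma}$ separately via (\ref{interface bdry deri. for linear prob}) exactly as the paper does. The only cosmetic difference is that you invoke the parabolic Hopf lemma to obtain the strict inequality $\frac{\p u}{\p n}(x_0,t_0)<0$, whereas the paper gets by with the elementary observation that $\frac{\p u}{\p n}(x_0,t_0)\leq 0$ at a boundary minimum, the strictness needed for the contradiction coming instead from the hypothesis $\inf g>0$.
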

\begin{proof}
Since $\psi>0$ on $\ol{\O}$, we have $m\triangleq \min\limits_{\ol{\O}}\psi>0$. Now we claim $u> 0$ on $\ol{\O}\times[0,T]$. If not, then there will exist $x_0\in\ol{\O}$ and $t_0\in(0,T]$ such that 
\[u(x_0,t_0)=0=\min_{\ol{\O}\times[0,t_0]} u.\]
By the strong maximum principle, $x_0\in\p\O$. If $x_0\in\Gamma_1\cup\Gamma_2$, then  
\[0< g(x_0,t_0)=\frac{\p u}{\p n}(x_0,t_0)+\beta(x_0,t_0)\,u(x_0,t_0)=\frac{\p u}{\p n}(x_0,t_0)\leq 0,\]
which is impossible. If $x_0\in\wt{\Gamma}$, then
\begin{align*}
0&<\frac{1}{2}\,\big[g_1(x_0,t_0)+g_2(x_0,t_0)\big]\\
&=\frac{\p u}{\p n}(x_0,t_0)+ \frac{1}{2}\,\big[\beta_1(x_0,t_0)+\beta_2(x_0,t_0)\big]u(x_0,t_0)\\
&=\frac{\p u}{\p n}(x_0,t_0)\leq 0,
\end{align*} 
which is also a contradiction. Thus, the Lemma follows.
\end{proof}

\begin{corollary}\label{Cor, comparison and uniqueness for linear prob. of simple model}
Suppose in (\ref{Linear Prob, Simple Model}), $f\geq 0$ on $\ol{\O}\times[0,T]$, $\psi\geq 0$ on $\ol{\O}$ and
$g\geq 0$ on $(\Gamma_1\cup\Gamma_2)\times(0,T]$, then the solution $u\geq 0$ on $\ol{\O}\times[0,T]$. In particular, the solution to (\ref{Linear Prob, Simple Model}) on $\ol{\O}\times[0,T]$ is unique.
\end{corollary}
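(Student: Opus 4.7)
The plan is to reduce to Lemma \ref{Lemma, weak comparison for linear prob.} by an $\epsilon$-perturbation of the data, using linearity of (\ref{Linear Prob, Simple Model}). First I would construct, via Theorem \ref{Thm, exist. for linear simple model}, an auxiliary function $v\in\mathcal{A}_T$ that solves (\ref{Linear Prob, Simple Model}) with the same $\beta$ but with the constant data $f\equiv 1$, $g\equiv 1$, $\psi\equiv 1$ (each of which lies in the required classes $C^{\alpha,\alpha/2}\big(\ol{\O}\times[0,T]\big)$, $\mathcal{B}_T$, and $C^{1}(\ol{\O})$, respectively). Because these data are strictly positive, Lemma \ref{Lemma, weak comparison for linear prob.} applies to $v$ and yields $v>0$ on $\ol{\O}\times[0,T]$; being continuous on this compact set, $v$ is also bounded above, so $M\triangleq\sup_{\ol{\O}\times[0,T]}v<\infty$.

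Next, for arbitrary $\epsilon>0$, linearity makes $u+\epsilon v\in\mathcal{A}_T$ a solution of (\ref{Linear Prob, Simple Model}) with data $(f+\epsilon,\,g+\epsilon,\,\psi+\epsilon)$; in particular, the interface requirement (\ref{interface bdry deri. for linear model}) carries over with its right-hand side shifted by $+\epsilon$. Each of the new data is now strictly positive since $f+\epsilon\geq\epsilon$, $\psi+\epsilon\geq\epsilon$, and $\inf_{(\Gamma_1\cup\Gamma_2)\times[0,T]}(g+\epsilon)\geq\epsilon>0$, so a second application of Lemma \ref{Lemma, weak comparison for linear prob.} — this time to $u+\epsilon v$ — gives $u+\epsilon v>0$ on $\ol{\O}\times[0,T]$. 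Therefore $u>-\epsilon v\geq -\epsilon M$ pointwise, and sending $\epsilon\rightarrow 0^{+}$ delivers the desired $u\geq 0$.

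Uniqueness then follows immediately. If $u_1$ and $u_2$ are two solutions of (\ref{Linear Prob, Simple Model}) on $\ol{\O}\times[0,T]$, then by linearity $w\triangleq u_1-u_2$ is a solution with $f\equiv 0$, $g\equiv 0$, $\psi\equiv 0$, and the interface condition (\ref{interface bdry deri. for linear model}) also becomes homogeneous. Applying the nonnegativity result just proved to $w$ and to $-w$ in turn yields $w\geq 0$ and $-w\geq 0$ on $\ol{\O}\times[0,T]$, hence $u_1\equiv u_2$.

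I do not anticipate any genuine obstacle: the whole argument is just linearity combined with a single perturbation, and both ingredients are already available — existence of the comparison function $v$ is handed to us by Theorem \ref{Thm, exist. for linear simple model}, and the strict-positivity comparison is handed to us by Lemma \ref{Lemma, weak comparison for linear prob.}. The only mild point worth checking carefully is that $u+\epsilon v$ still qualifies as a solution in the sense of Definition \ref{Def, soln to Linear Prob.}, which holds term-by-term because $\mathcal{A}_T$ is a vector space and the boundary and interface identities are linear in the unknown.
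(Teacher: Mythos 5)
Your proof is correct and is essentially the paper's own argument: the paper likewise invokes Theorem \ref{Thm, exist. for linear simple model} to produce the auxiliary solution $v$ with data $f\equiv g\equiv\psi\equiv 1$, applies Lemma \ref{Lemma, weak comparison for linear prob.} to $w_{\v}=u+\v v$, and lets $\v\rightarrow 0$. The only (harmless) extra step you take is applying the weak comparison lemma to $v$ itself to get $v>0$ and the bound $M$; the paper just uses the pointwise limit $\v v\rightarrow 0$, which suffices since $v$ is continuous on the compact set $\ol{\O}\times[0,T]$.
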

\begin{proof}
Due to Theorem \ref{Thm, exist. for linear simple model}, there exists a solution $v\in\mathcal{A}_{T}$ to the following problem:
\bes\left\{\begin{array}{lll}
v_t(x,t)-\Delta v(x,t)=1 &\text{in}& \O\times(0,T], \\
\frac{\p v}{\p n}(x,t)+\beta(x,t) v(x,t)=1 &\text{on}& (\Gamma_1\cup\Gamma_2)\times(0,T],\\
v(x,0)=1 &\text{in}& \O.
\end{array}\right. \ees
For any $\v>0$, we define $w_{\v}=u+\v\,v$, then $w_{\v}$ satisfies
\bes\left\{\begin{array}{lll}
(w_{\v})_t(x,t)-\Delta w_{\v}(x,t)=f+\v\geq\v &\text{in}& \O\times(0,T], \\
\frac{\p w_{\v}}{\p n}(x,t)+\beta(x,t) w_{\v}(x,t)=g+\v\geq\v &\text{on}& (\Gamma_1\cup\Gamma_2)\times(0,T],\\
w_{\v}(x,0)=\psi+\v\geq\v &\text{in}& \O.
\end{array}\right. \ees
By applying Lemma \ref{Lemma, weak comparison for linear prob.}, $w_{\v}\geq 0$ on $\ol{\O}\times[0,T]$. Taking $\v\rightarrow 0$, we get $u\geq 0$ on $\ol{\O}\times[0,T]$. 
\end{proof}

\subsection{Nonlinear Case}
\label{Subsec, Nonlinear Case}
This subsection is devoted to the existence and uniqueness of the solution to the following problem with a local nonlinear Neumann boundary condition:
\be\label{Nonlinear Prob, Simple Model}
\left\{\begin{array}{lll}
u_t(x,t)-\Delta u(x,t)=f(x,t) &\text{in}& \O\times(0,T], \\
\frac{\p u}{\p n}(x,t)=\eta(x)F\big(u(x,t)\big) &\text{on}& \Gamma_1\times(0,T],\\
\frac{\p u}{\p n}(x,t)=0 &\text{on}& \Gamma_2\times(0,T],\\
u(x,0)=\psi(x) &\text{in}& \O,
\end{array}\right. \ee 
where $f\in C^{\alpha,\alpha/2}\big(\ol{\O}\times[0,T]\big)$, $\eta\in C^{1}(\ol{\Gamma}_1)$ and $\eta\geq 0$, $F\in C^{1}(\m{R})$, $\psi\in C^{1}(\ol{\O})$. The solution is understood in the following way.

\begin{definition}\label{Def, soln to Nonlinear Prob.}
For any $T>0$, a solution to (\ref{Nonlinear Prob, Simple Model}) on $\ol{\O}\times[0,T]$ means a function $u$ in $\mathcal{A}_{T}$ that satisfies (\ref{Nonlinear Prob, Simple Model}) pointwise and moreover, for any $(x,t)\in\wt{\Gamma}\times(0,T]$, $\frac{\p u}{\p n}(x,t)$ exists and
\be\label{interface bdry deri. for nonlinear model}
\frac{\p u}{\p n}(x,t)=\frac{1}{2}\,\eta(x)\,F\big(u(x,t)\big). \ee
\end{definition}
This time, we will first show some comparison principles and then discuss the existence of the solution. 

\begin{theorem}\label{Thm, nonlinear comparison and uniqueness}
Suppose $u_{i}\in\mathcal{A}_{T}(i=1,2)$ is the solution to (\ref{Nonlinear Prob, Simple Model}) on $\ol{\O}\times[0,T]$ with right hand side 
$f_{i}(i=1,2)$, $\eta F_{i}(i=1,2)$ and $\psi_{i}(i=1,2)$. If $f_1\geq f_2$ on $\ol{\O}\times[0,T]$, $F_1\geq F_2$ on $\m{R}$, 
$\psi_1\geq\psi_2$ on $\ol{\O}$ and $\eta\geq 0$ on $\ol{\Gamma}_1$, then $u_1\geq u_2$ on $\ol{\O}\times[0,T]$. As a consequence, the solution to (\ref{Nonlinear Prob, Simple Model}) is unique.
\end{theorem}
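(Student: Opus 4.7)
The plan is to reduce the nonlinear comparison to the linear comparison in Corollary \ref{Cor, comparison and uniqueness for linear prob. of simple model} by linearizing the difference $w \triangleq u_1 - u_2$ via the fundamental theorem of calculus. Using $F_i\in C^1$, write
\[ F_1(u_1)-F_1(u_2) = (u_1-u_2)\int_0^1 F_1'\!\big(u_2 + s(u_1-u_2)\big)\,ds, \]
and then split
\[ \eta F_1(u_1)-\eta F_2(u_2)=\eta\big[F_1(u_1)-F_1(u_2)\big]+\eta\big[F_1(u_2)-F_2(u_2)\big]. \]
Define $\beta,g:(\Gamma_1\cup\Gamma_2)\times(0,T]\rightarrow\m{R}$ by
\[\beta(x,t)=\begin{cases}-\eta(x)\int_0^1 F_1'\!\big(u_2+s(u_1-u_2)\big)\,ds,& x\in\Gamma_1,\\ 0,& x\in\Gamma_2,\end{cases}\qquad g(x,t)=\begin{cases}\eta(x)\big[F_1(u_2)-F_2(u_2)\big],& x\in\Gamma_1,\\ 0,& x\in\Gamma_2.\end{cases}\]

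First I would verify that $w$ satisfies a linear problem of the form (\ref{Linear Prob, Simple Model}) with the above $\beta$ and $g$. In $\Omega\times(0,T]$, $w_t-\Delta w=f_1-f_2\geq 0$. On $\Gamma_1\times(0,T]$ and $\Gamma_2\times(0,T]$, a direct computation using the splitting above gives $\frac{\p w}{\p n}+\beta w=g$. On the interface $\wt{\Gamma}\times(0,T]$, I would use the condition (\ref{interface bdry deri. for nonlinear model}) satisfied by each $u_i$ together with the same splitting to check
\[\frac{\p w}{\p n}(x,t)+\frac{1}{2}\big[\beta_1(x,t)+\beta_2(x,t)\big]w(x,t)=\frac{1}{2}\big[g_1(x,t)+g_2(x,t)\big],\]
so that $w$ satisfies the interface requirement (\ref{interface bdry deri. for linear model}) demanded by Definition \ref{Def, soln to Linear Prob.}. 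Finally, $w(x,0)=\psi_1(x)-\psi_2(x)\geq 0$.

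Next I would check the regularity: since $u_1,u_2\in C(\ol{\O}\times[0,T])$, $\eta\in C^1(\ol{\Gamma}_1)$, and $F_1,F_2\in C^1(\m{R})$, both $\beta$ and $g$ are uniformly continuous on $\Gamma_i\times(0,T]$ for $i=1,2$, i.e.\ $\beta,g\in\mathcal{B}_T$. The integral formulation is what makes this smooth across $\{u_1=u_2\}$, avoiding the non-uniqueness of the mean-value point. Moreover, the hypothesis $F_1\geq F_2$ together with $\eta\geq 0$ yields $g\geq 0$ on $(\Gamma_1\cup\Gamma_2)\times(0,T]$. Thus $w$ is a solution on $\ol{\O}\times[0,T]$ of a linear problem (\ref{Linear Prob, Simple Model}) with nonnegative source, nonnegative initial data, and nonnegative boundary inhomogeneity. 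Corollary \ref{Cor, comparison and uniqueness for linear prob. of simple model} then gives $w\geq 0$, i.e.\ $u_1\geq u_2$ on $\ol{\O}\times[0,T]$. Uniqueness follows by applying the result with $f_1=f_2$, $F_1=F_2$, $\psi_1=\psi_2$ to both orderings.

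The main subtlety I anticipate is not the algebra but ensuring the interface identity (\ref{interface bdry deri. for linear model}) matches up correctly: one must verify that the ``$1/2$'' in (\ref{interface bdry deri. for nonlinear model}) for each $u_i$ combines consistently with the decomposition of $\eta(F_1(u_1)-F_2(u_2))$ into the $\beta w$ and $g$ pieces, and that the extensions $\beta_i$, $g_i$ to $\ol{\Gamma}_i\times[0,T]$ used in (\ref{interface bdry deri. for linear model}) agree with the pointwise values coming from the $\Gamma_1$ and $\Gamma_2$ sides respectively. Once this bookkeeping is handled (using continuity of $u_i,\eta,F_i$), the rest reduces entirely to the linear theory already established.
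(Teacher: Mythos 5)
Your proposal is correct and follows essentially the same route as the paper: linearize the difference $w=u_1-u_2$ via the fundamental theorem of calculus and invoke Corollary \ref{Cor, comparison and uniqueness for linear prob. of simple model}. The only (immaterial) difference is that you apply the integral mean value formula to $F_1$ and place the nonnegative remainder at $u_2$, whereas the paper applies it to $F_2$ and places the remainder $\eta[F_1(u_1)-F_2(u_1)]$ at $u_1$; your extra care with the interface condition (\ref{interface bdry deri. for linear model}) is a detail the paper leaves implicit but which checks out exactly as you describe.
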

\begin{proof}
Let $f=f_1-f_2$, $\psi=\psi_1-\psi_2$ and $w=u_1-u_2$, then we have $f\geq 0$ on $\ol{\O}\times[0,T]$, $\psi\geq 0$ on $\ol{\O}$ and 
$F_2\big(u_1(x,t)\big)-F_2\big(u_2(x,t)\big)=\beta(x,t)\,w(x,t)$, where 
\[\beta(x,t)=\int_{0}^{1}F_2'\big(s\,u_1(x,t)+(1-s)u_2(x,t)\big)\,ds.\]
Thus $w$ satisfies the following equations
\[\left\{\begin{array}{lll}
w_t(x,t)-\Delta w(x,t)=f(x,t)\geq 0 &\text{in}& \O\times(0,T], \\
\frac{\p w}{\p n}(x,t)-\eta(x)\,\beta(x,t)\,w(x,t)=\eta(x)\Big[F_1\big(u_1(x,t)\big)-F_2\big(u_1(x,t)\big)\Big]\geq 0 &\text{on}& \Gamma_1\times(0,T],\\
\frac{\p w}{\p n}(x,t)=0 &\text{on}& \Gamma_2\times(0,T],\\
w(x,0)=\psi(x)\geq 0 &\text{in}& \O.
\end{array}\right.\]
Now it follows from Corollary \ref{Cor, comparison and uniqueness for linear prob. of simple model} that $w\geq 0$. \end{proof}

\begin{theorem}\label{Thm, positivity of the solution}
Suppose $u\in\mathcal{A}_T$ is the solution to (\ref{Nonlinear Prob, Simple Model}) with $f\geq 0$ on $\ol{\O}\times[0,T]$, $\psi\geq 0$ on $\ol{\O}$, $\eta\geq 0$ on $\ol{\Gamma}_1$ and $F(0)\geq 0$, then $u\geq 0$ on $\ol{\O}\times[0,T]$. In addition, if $\psi\not\equiv 0$, then 
$u(x,t)>0,\,\forall \, x\in\ol{\O},\,0<t\leq T$.
\end{theorem}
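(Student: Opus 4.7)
The plan is to reduce both conclusions to the linear theory already developed for problem (\ref{Linear Prob, Simple Model}). For the nonnegativity statement, the first step is to rewrite the nonlinear boundary flux as a linear one via the fundamental theorem of calculus:
\[
F(u(x,t)) \;=\; F(0) + \beta(x,t)\,u(x,t), \qquad \beta(x,t)\triangleq\int_{0}^{1} F'\bigl(s\,u(x,t)\bigr)\,ds.
\]
Since $F\in C^{1}(\m{R})$ and $u\in C(\ol{\O}\times[0,T])$ is bounded, $\beta$ is continuous and bounded on $\ol{\O}\times[0,T]$. Extend $\eta$ by zero to $\Gamma_2$ and set $\widetilde{\beta}(x,t)\triangleq -\eta(x)\beta(x,t)$, $g(x,t)\triangleq \eta(x)F(0)$ on $(\Gamma_1\cup\Gamma_2)\times(0,T]$. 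Both of these functions lie in $\mathcal{B}_T$, being continuous on each $\ol{\Gamma}_i\times[0,T]$ separately with only a possible jump across $\widetilde{\Gamma}$ (which is allowed by the definition of $\mathcal{B}_T$).

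Next I would verify that $u$ is a solution of (\ref{Linear Prob, Simple Model}) with coefficient $\widetilde{\beta}$, boundary source $g$, forcing $f$, and initial datum $\psi$ in the sense of Definition \ref{Def, soln to Linear Prob.}. Substituting $F(u)=F(0)+\beta u$ into the pointwise boundary conditions of (\ref{Nonlinear Prob, Simple Model}) on $\Gamma_1$ and $\Gamma_2$ directly yields $\frac{\p u}{\p n}+\widetilde{\beta}\,u=g$, and doing the same with the interface identity (\ref{interface bdry deri. for nonlinear model}) reproduces exactly (\ref{interface bdry deri. for linear model}) at $\widetilde{\Gamma}$, since $\widetilde{\beta}_2\equiv 0$ and $g_2\equiv 0$ on $\widetilde{\Gamma}$. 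The sign hypotheses $f\geq 0$, $\psi\geq 0$, and $g=\eta F(0)\geq 0$ all hold, so Corollary \ref{Cor, comparison and uniqueness for linear prob. of simple model} immediately gives $u\geq 0$ on $\ol{\O}\times[0,T]$.

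For the strict positivity when $\psi\not\equiv 0$, the strategy is proof by contradiction using parabolic maximum principles. Suppose $u(x_0,t_0)=0$ for some $(x_0,t_0)\in\ol{\O}\times(0,T]$; since $u\geq 0$, this is a minimum of $u$ on $\ol{\O}\times[0,t_0]$. If $x_0\in\O$, the strong maximum principle applied to the supersolution inequality $u_t-\Delta u=f\geq 0$ would force $u\equiv 0$ on $\ol{\O}\times[0,t_0]$, and in particular $\psi\equiv 0$, contradicting the hypothesis. If $x_0\in\p\O$, the $C^{2}$ regularity of $\p\O$ gives an interior ball at $x_0$, and the parabolic Hopf lemma yields $\liminf_{h\to 0^{+}} h^{-1}\bigl[u(x_0,t_0)-u(x_h,t_0)\bigr]<0$ with $x_h=x_0-h\vec{n}(x_0)$; combined with the mean value theorem and the existence of the limit in (\ref{Normal Deri. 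Def.}) guaranteed by the boundary condition, this forces $\frac{\p u}{\p n}(x_0,t_0)<0$. This contradicts $\frac{\p u}{\p n}(x_0,t_0)=\eta(x_0)F(0)\geq 0$ on $\Gamma_1$, $=0$ on $\Gamma_2$, and $=\tfrac{1}{2}\eta(x_0)F(0)\geq 0$ on $\widetilde{\Gamma}$. The most delicate point is this last Hopf step on the boundary, particularly at interface points in $\widetilde{\Gamma}$ where the normal derivative is defined in the averaged sense (\ref{interface bdry deri. for nonlinear model}); handling this requires carefully translating the one-sided difference quotient produced by Hopf into a statement about the inner-gradient limit defining $\frac{\p u}{\p n}$ in (\ref{Normal Deri. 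Def.}).
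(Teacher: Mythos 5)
Your proposal is correct and follows essentially the same route as the paper: the nonnegativity is obtained by writing $F(u)=F(0)+\beta u$ with $\beta(x,t)=\int_0^1F'(su(x,t))\,ds$ and invoking Corollary \ref{Cor, comparison and uniqueness for linear prob. of simple model}, and the strict positivity follows from the strong maximum principle together with the Hopf lemma. Your elaboration of the Hopf step (in particular reconciling the difference quotient with the inner-gradient limit in (\ref{Normal Deri. Def.}) at points of $\widetilde{\Gamma}$) is more detailed than the paper's one-line appeal, but it is the same argument.
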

\begin{proof}
To prove the first statement, we write $$F\big(u(x,t)\big)=F\big(u(x,t)\big)-F(0)+F(0)=\beta(x,t)u(x,t)+F(0),$$
where \[\beta(x,t)=\int_{0}^{1}F'\big(su(x,t)\big)\,ds.\]
Hence $u$ satisfies 
\[\left\{\begin{array}{lll}
u_t(x,t)-\Delta u(x,t)=f(x,t)\geq 0 &\text{in}& \O\times(0,T], \\
\frac{\p u}{\p n}(x,t)-\eta(x)\,\beta(x,t)\,u(x,t)=\eta(x)\,F(0)\geq 0 &\text{on}& \Gamma_1\times(0,T],\\
\frac{\p u}{\p n}(x,t)=0 &\text{on}& \Gamma_2\times(0,T],\\
u(x,0)=\psi(x)\geq 0 &\text{in}& \O.
\end{array}\right.\]
It then follows from Corollary \ref{Cor, comparison and uniqueness for linear prob. of simple model} that $u\geq 0$. Now in order to prove the second statement, we suppose additionally that $\psi\not\equiv 0$, then by applying the strong maximum principle and the Hopf lemma, we get $u(x,t)>0,\,\forall \, x\in\ol{\O},\,0<t\leq T$. \end{proof}

\begin{corollary}\label{Cor, comparison for cut-off fcn.}
Suppose $u_{i}\in\mathcal{A}_{T}(i=1,2)$ is the solution to (\ref{Nonlinear Prob, Simple Model}) on $\ol{\O}\times[0,T]$ with right hand side $f$, $\eta_{i}F$ and $\psi$. If $f\geq 0$ on $\ol{\O}\times[0,T]$, $\psi\geq 0$ on $\ol{\O}$, $F(t)\geq 0$ for $t\geq 0$ and $\eta_1\geq \eta_2\geq 0$ on $\ol{\Gamma}_1$, then $u_1\geq u_2$ on $\ol{\O}\times[0,T]$. 
\end{corollary}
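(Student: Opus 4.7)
My plan is to mimic the strategy used to prove Theorem \ref{Thm, nonlinear comparison and uniqueness}, namely to linearize the difference $u_1-u_2$ via the mean value theorem and then invoke the linear comparison principle. Note that Theorem \ref{Thm, nonlinear comparison and uniqueness} does not apply directly: there the two problems share a common $\eta$ but have different nonlinearities $F_1,F_2$, whereas here the nonlinearity $F$ is common and the cut-offs $\eta_1,\eta_2$ differ. So we need a fresh calculation, but it is almost the same linearization.

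First, I would observe that since $f\geq 0$, $\psi\geq 0$, $\eta_i\geq 0$ and $F(0)\geq 0$ (this last fact comes from $F(t)\geq 0$ for all $t\geq 0$ together with continuity), Theorem \ref{Thm, positivity of the solution} guarantees that both $u_1\geq 0$ and $u_2\geq 0$ on $\ol{\O}\times[0,T]$; consequently $F(u_2)\geq 0$ throughout. Next, setting $w=u_1-u_2$ and writing
\[
\eta_1 F(u_1)-\eta_2 F(u_2)=\eta_1\bigl[F(u_1)-F(u_2)\bigr]+(\eta_1-\eta_2)F(u_2)=\eta_1\,\beta(x,t)\,w(x,t)+(\eta_1-\eta_2)F(u_2),
\]
where $\beta(x,t)\triangleq \int_{0}^{1}F'\bigl(s\,u_1(x,t)+(1-s)u_2(x,t)\bigr)\,ds$, I would show that $w$ solves the linear problem
\[
\left\{\begin{array}{lll}
w_t(x,t)-\Delta w(x,t)=0 &\text{in}& \O\times(0,T],\\
\dfrac{\p w}{\p n}(x,t)-\eta_1(x)\beta(x,t)\,w(x,t)=(\eta_1-\eta_2)(x)\,F(u_2(x,t)) &\text{on}& \Gamma_1\times(0,T],\\
\dfrac{\p w}{\p n}(x,t)=0 &\text{on}& \Gamma_2\times(0,T],\\
w(x,0)=0 &\text{in}& \O.
\end{array}\right.
\]
By the hypotheses $\eta_1\geq\eta_2$ and $F(u_2)\geq 0$, the source term on $\Gamma_1$ is nonnegative, and the initial and interior source terms are also nonnegative, so this is exactly the setting of Corollary \ref{Cor, comparison and uniqueness for linear prob. of simple model}, whose conclusion yields $w\geq 0$, i.e., $u_1\geq u_2$.

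The one place where I should not be sloppy is checking that $w$ satisfies the technical interface condition (\ref{interface bdry deri. for linear model}) required by Definition \ref{Def, soln to Linear Prob.}, since without it Corollary \ref{Cor, comparison and uniqueness for linear prob. of simple model} cannot be applied. For any $(x,t)\in\wt{\Gamma}\times(0,T]$, each $u_i$ obeys the nonlinear interface condition $\frac{\p u_i}{\p n}=\tfrac{1}{2}\eta_i(x)F(u_i(x,t))$ from Definition \ref{Def, soln to Nonlinear Prob.} (with the convention $\eta_i=0$ on $\ol{\Gamma}_2$, so that $\eta_{i,2}=0$). Subtracting and using the same algebraic identity as above gives
\[
\frac{\p w}{\p n}(x,t)=\tfrac{1}{2}\eta_1(x)\beta(x,t)w(x,t)+\tfrac{1}{2}(\eta_1-\eta_2)(x)F(u_2(x,t)),
\]
which is precisely (\ref{interface bdry deri. for linear model}) for the coefficient $\t\beta=-\eta_1\beta$ on $\Gamma_1$, $0$ on $\Gamma_2$, and source $\t g=(\eta_1-\eta_2)F(u_2)$ on $\Gamma_1$, $0$ on $\Gamma_2$, since the extensions from each side contribute one of the two halves in (\ref{interface bdry deri. for linear model}).

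The main (and only mildly delicate) obstacle is this interface check; once it is in hand, the rest is a direct reduction to the already-proven linear comparison. Everything else is routine algebra and invocation of prior results in the excerpt.
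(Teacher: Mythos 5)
Your proposal is correct and follows essentially the same route as the paper: establish $u_2\geq 0$ (hence $F(u_2)\geq 0$) via Theorem \ref{Thm, positivity of the solution}, linearize $\eta_1F(u_1)-\eta_2F(u_2)$ with the same $\beta(x,t)=\int_0^1F'\bigl(su_1+(1-s)u_2\bigr)\,ds$, and reduce to Corollary \ref{Cor, comparison and uniqueness for linear prob. of simple model}. Your explicit verification of the interface condition (\ref{interface bdry deri. for linear model}) is a detail the paper leaves implicit, and keeping the exact source $(\eta_1-\eta_2)F(u_2)$ rather than the paper's inequality form is a harmless cosmetic difference.
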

\begin{proof}
Firstly, by Theorem \ref{Thm, positivity of the solution}, $u_2$ is nonnegative on $\ol{\O}\times[0,T]$ and therefore $F\circ u_2$ is nonnegative on $\ol{\O}\times[0,T]$. Writing $w=u_1-u_2$, then for any $x\in\Gamma_1$, $t\in(0,T]$,  
\begin{align*}
&\quad\,\eta_1(x)F\big(u_1(x,t)\big)-\eta_2(x)F\big(u_2(x,t)\big)\\
&=\eta_1(x)\big[F\big(u_1(x,t)\big)-F\big(u_2(x,t)\big)\big]+F\big(u_2(x,t)\big)\big[\eta_1(x)-\eta_2(x)\big]\\
&\geq \eta_1(x)\big[F\big(u_1(x,t)\big)-F\big(u_2(x,t)\big)\big]\\
&=\eta_1(x)\beta(x,t)w(x,t),
\end{align*}
where \[\beta(x,t)=\int_{0}^{1}F'\big(s\,u_1(x,t)+(1-s)u_2(x,t)\big)\,ds.\]
Thus, $w$ satisfies 
\[\left\{\begin{array}{lll}
w_t(x,t)-\Delta w(x,t)=0 &\text{in}& \O\times(0,T], \\
\frac{\p w}{\p n}(x,t)-\eta_1(x)\,\beta(x,t)\,w(x,t)\geq 0 &\text{on}& \Gamma_1\times(0,T],\\
\frac{\p w}{\p n}(x,t)=0 &\text{on}& \Gamma_2\times(0,T],\\
w(x,0)=0&\text{in}& \O.
\end{array}\right.\]
Applying Corollary \ref{Cor, comparison and uniqueness for linear prob. of simple model}, we have $w\geq 0$ on $\ol{\O}\times[0,T]$.
\end{proof}

Next, we turn to the existence of the solution. As a common process to deal with the nonlinear problem, we will take advantage of the theories for the linear problems and some fixed point theorems. Let $T>0$, $R>0$ and $X_T=C\big(\ol{\O}\times[0,T]\big)$ be equipped with the maximum norm: $||u||\triangleq \max\limits_{\ol{\O}\times[0,T]} |u|$ for any $u\in X_T$, then $X_T$ is a Banach space and
\[X_{T,R}\triangleq\{v\in X_T: ||v||\leq R\}\]  
is also a Banach space. For any $v\in X_{T,R}$, it follows from Theorem \ref{Thm, exist. for linear simple model} and Corollary \ref{Cor, comparison and uniqueness for linear prob. of simple model} that there exists a unique solution $u\in\mathcal{A}_T$ to the following problem
\be\label{fixed pt mapping}
\left\{\begin{array}{lll}
u_t(x,t)-\Delta u(x,t)=f(x,t) &\text{in}& \O\times(0,T], \\
\frac{\p u}{\p n}(x,t)=\eta(x)F\big(v(x,t)\big) &\text{on}& \Gamma_1\times(0,T],\\
\frac{\p u}{\p n}(x,t)=0 &\text{on}& \Gamma_2\times(0,T],\\
u(x,0)=\psi(x) &\text{in}& \O.
\end{array}\right.\ee
Thus, it determines a mapping $\Psi_T:X_{T,R}\rightarrow \mathcal{A}_{T}$. Our strategy is to pick up a suitable $R$ (depending on $T$) show that $\Psi_T$ has a fixed point in $X_{T,R}$, which turns out to be the unique solution to (\ref{Nonlinear Prob, Simple Model}). 

In the proof of Theorem \ref{Thm, exist. for nonlin. simple model}, we will utilize the Schauder fixed point theorem, which requires to verify the following three things:
\begin{itemize}
\item[(i)] $\Psi_T$ maps $X_{T,R}$ to $\mathcal{A}_{T}\cap X_{T,R}$ for some suitably chosen $R>0$;
\item[(ii)] $\Psi_T:X_{T,R}\rightarrow X_{T,R}$ is continuous;
\item[(iii)] $\Psi_T(X_{T,R})$ is precompact in $X_{T,R}$.
\end{itemize}
Usually, the requirement (iii) is the most technical part and this time it requires the following Lemma \ref{Lemma, equicontinuity}, which is a fact mentioned in the proof of (\cite{Friedman}, Theorem 13, Sec. 5, Chap. 7).

\begin{lemma}\label{Lemma, equicontinuity}
Given $T>0$ and $\{\varphi_{j}\}_{j\geq 1}\subset L^{\infty}\big((\Gamma_1\cup\Gamma_2)\times(0,T]\big)$, we define 
\be\label{w_j}
w_{j}(x,t)=\int_{0}^{t}\int_{\p\O}\Phi(x-y,t-\tau)\,\varphi_{j}(y,\tau)\,dS_{y}\,d\tau,\quad\forall\, (x,t)\in\ol{\O}\times[0,T].\ee
If $\{\varphi_{j}\}_{j\geq 1}$ are uniformly bounded on $(\Gamma_1\cup\Gamma_2)\times(0,T]$, then $\{w_j\}_{j\geq 1}$ are uniformly bounded and equicontinuous on $\ol{\O}\times[0,T]$.
\end{lemma}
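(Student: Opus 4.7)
The plan is to verify the two claims separately, with Lemma \ref{Lemma, bound for surface integral} as the workhorse. Let $M\triangleq \sup_{j\geq 1}\|\varphi_j\|_{L^{\infty}}$, which is finite by hypothesis.

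For uniform boundedness, I would apply Lemma \ref{Lemma, bound for surface integral} to the kernel integrated over $\p\O$: for any $x\in\ol{\O}$ and any $0\leq\tau<t\leq T$,
\bes
\int_{\p\O}\Phi(x-y,t-\tau)\,dS_y \;=\; \frac{1}{(4\pi(t-\tau))^{n/2}}\int_{\p\O}e^{-\frac{|x-y|^2}{4(t-\tau)}}\,dS_y \;\leq\; C(t-\tau)^{-1/2}.
\ees
(For $x\in\O$ one can still bound the integrand, since the Gaussian factor only helps.) Integrating in $\tau$ then gives $|w_j(x,t)|\leq 2MC\sqrt{T}$ for every $j$ and every $(x,t)\in\ol{\O}\times[0,T]$, which is the uniform bound.

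For equicontinuity, fix $(x_1,t_1),(x_2,t_2)\in\ol{\O}\times[0,T]$ with $t_1\leq t_2$ and decompose
\bes
w_j(x_2,t_2)-w_j(x_1,t_1) \;=\; I_1 + I_2,
\ees
where $I_1$ is the integral over $\tau\in(t_1,t_2]$ involving $\Phi(x_2-y,t_2-\tau)$ and $I_2$ is the integral over $\tau\in(0,t_1]$ of the difference $\Phi(x_2-y,t_2-\tau)-\Phi(x_1-y,t_1-\tau)$. The same kernel estimate as above yields $|I_1|\leq 2MC\sqrt{t_2-t_1}$, uniformly in $j$, $x_1$, $x_2$. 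For $I_2$, I would further split the $\tau$-integral at $t_1-\eta$ for a small parameter $\eta>0$ to be chosen. On the near-singularity piece $\tau\in(t_1-\eta,t_1]$, Lemma \ref{Lemma, bound for surface integral} applied to each of $\Phi(x_2-y,t_2-\tau)$ and $\Phi(x_1-y,t_1-\tau)$ separately gives a bound of order $M\sqrt{\eta}$, independent of $j$, $x_i$, $t_i$.

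The main obstacle (and final step) is the away-from-singularity piece $\tau\in(0,t_1-\eta]$: here $t_i-\tau\geq\eta>0$, so $\Phi$ is smooth as a function of the space-time argument, and hence uniformly continuous on the compact set $\{(z,s):z\in\ol{\O}-\p\O,\,\eta\leq s\leq T\}$ after a translation. This gives
\bes
\sup_{y\in\p\O,\;\tau\in[0,t_1-\eta]}\bigl|\Phi(x_2-y,t_2-\tau)-\Phi(x_1-y,t_1-\tau)\bigr| \;<\; \v
\ees
whenever $|x_1-x_2|+|t_1-t_2|$ is sufficiently small (depending on $\eta$), and the $\tau$- and $y$-integrals then contribute at most $\v\,T\,|\p\O|$. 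The quantifier order is: first pick $\eta$ small so the singular contributions and $\sqrt{t_2-t_1}$-piece are under $\v/2$, then pick $\delta=\delta(\eta,\v)$ so the smooth-region contribution is under $\v/2$; the resulting $\delta$ is uniform in $j$ since $M$ controls every step. This establishes equicontinuity and completes the proof.
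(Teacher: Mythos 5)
Your proof is correct, and since the paper itself omits the argument (its ``proof'' is the single remark that the lemma follows from the estimate $|\Phi(x-y,t-\tau)|\leq C\,(t-\tau)^{-3/4}\,|x-y|^{-(n-3/2)}$), yours is a genuine completion rather than a reproduction. The routes differ in which geometric lemma carries the load: the paper's hint pairs the split pointwise bound above with Lemma \ref{Lemma, singular bdry integral} (so that $\int_{\p\O}|x-y|^{-(n-3/2)}\,dS_y\leq C$ and the $\tau$-singularity $(t-\tau)^{-3/4}$ is integrable, giving a $t^{1/4}$ modulus), whereas you integrate the Gaussian directly over $\p\O$ via Lemma \ref{Lemma, bound for surface integral} to get the sharper kernel decay $C(t-\tau)^{-1/2}$ and hence $\sqrt{t_2-t_1}$ and $\sqrt{\eta}$ bounds for the singular pieces. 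Either estimate plugs into the same skeleton --- split the time integral at $t_1$ and again at $t_1-\eta$, then use uniform continuity of $\Phi$ on a compact set away from $s=0$ --- so the choice is a matter of taste; yours yields slightly better constants. Two small points worth a line each in a final write-up: (a) Lemma \ref{Lemma, bound for surface integral} is stated only for $x\in\p\O$, and since $w_j$ lives on all of $\ol{\O}$ you do need the extension to interior $x$; your parenthetical is right but the clean justification is that if $x^*$ is the nearest boundary point to $x$ then $|x-y|\geq\frac{1}{2}|x^*-y|$ for all $y\in\p\O$, so the boundary case is the worst case. (b) The $I_1$ term $2MC\sqrt{t_2-t_1}$ is controlled by the choice of $\delta$, not of $\eta$, so it belongs with the second step of your quantifier bookkeeping; this does not affect the validity of the argument.
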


\begin{proof}
By using the estimate
\[|\Phi(x-y,t-\tau)|\leq C\,(t-\tau)^{-3/4}\,|x-y|^{-(n-3/2)},\] 
it is not hard to prove this Lemma, so we omit the proof here.
\end{proof}

Now based on the arguments in (\cite{Friedman}, Theorem 13, Sec. 5, Chap. 7) and (\cite{L-GMW}, Theorem 1.3), we conclude the following theorem on the existence of the solution.

\begin{theorem}\label{Thm, exist. for nonlin. simple model}
For the nonlinear problem (\ref{Nonlinear Prob, Simple Model}) with $f$, $\eta$, $F$, $\psi$ described there, we have the following two conclusions.
\begin{itemize}
\item[(1)] There exists $T_0>0$ such that for any $0<T\leq T_0$, there exists a unique solution $u\in\mathcal{A}_{T}$ to (\ref{Nonlinear Prob, Simple Model}) on $\ol{\O}\times[0,T]$.
\item[(2)] If $F$ is a bounded function on $\m{R}$, then for any $T>0$, there exists a unique solution $u\in\mathcal{A}_{T}$ to (\ref{Nonlinear Prob, Simple Model}) on $\ol{\O}\times[0,T]$.
\end{itemize}
\end{theorem}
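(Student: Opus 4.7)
The overall strategy, as foreshadowed in the excerpt, is to apply Schauder's fixed point theorem to the mapping $\Psi_T: X_{T,R} \to X_{T,R}$ defined via the linear auxiliary problem (\ref{fixed pt mapping}). Uniqueness follows directly from Theorem \ref{Thm, nonlinear comparison and uniqueness}, so I focus on existence. The map $\Psi_T$ is well-defined and takes values in $\mathcal{A}_T$ by Theorem \ref{Thm, exist. for linear simple model} and Corollary \ref{Cor, comparison and uniqueness for linear prob. of simple model}, so any fixed point of $\Psi_T$ automatically solves (\ref{Nonlinear Prob, Simple Model}). It remains to verify the three conditions (i)--(iii) listed just before the theorem.

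For the key estimate underlying part (1), I would use the explicit representation (\ref{sol'n for simple model, conjectured form}) for $u=\Psi_T(v)$, writing
\[
u(x,t) = U_0(x,t) + \int_0^t\!\!\int_{\p\O} \Phi(x-y,t-\tau)\, \varphi_v(y,\tau)\, dS_y\, d\tau,
\]
where $U_0$ is the $v$-independent contribution from $\psi$ and $f$, and $\varphi_v\in\mathcal{B}_T$ solves the boundary fixed-point equation (\ref{fixed pt. eq. for bdry fcn}) with $\beta\equiv 0$ and inhomogeneity coming from $g_v(x,t):=\eta(x)F(v(x,t))$ on $\Gamma_1$, $0$ on $\Gamma_2$. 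The accumulated-kernel representation (\ref{explicit formula with accumulated kernel, linear}) together with the bound (\ref{est for accumulated kernel, linear}) and Lemma \ref{Lemma, singular bdry integral} yields $\|\varphi_v\|_{L^\infty}\le (1+C T^{1/4})\|H_v\|_{L^\infty}$, while a routine estimate using integration by parts (\ref{integration by parts for initial fcn}) on the $\psi$-term controls $\|H_v\|_{L^\infty}$ by $C(\|\psi\|_{C^1}+\|f\|_\infty+\|\eta\|_\infty\|F(v)\|_\infty)$. Finally Lemma \ref{Lemma, bound for surface integral} gives the standard single-layer bound $\bigl|\int_0^t\!\!\int_{\p\O}\Phi(x-y,t-\tau)\varphi_v\,dS_y\,d\tau\bigr|\le C\sqrt{T}\,\|\varphi_v\|_{L^\infty}$. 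Setting $R=\|\psi\|_\infty+1$ and $M_R:=\max_{|s|\le R}|F(s)|$, these combine to give $\|\Psi_T(v)\|_\infty\le \|\psi\|_\infty+CT\|f\|_\infty+C\sqrt{T}\,(1+CT^{1/4})\bigl(\|\psi\|_{C^1}+\|f\|_\infty+\|\eta\|_\infty M_R\bigr)\le R$ for all $v\in X_{T,R}$, provided $T\le T_0$ for some $T_0=T_0(R,M_R,\psi,f,\eta)>0$, verifying (i).

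For continuity (ii), if $v_n\to v$ uniformly in $X_{T,R}$ then $F(v_n)\to F(v)$ uniformly by continuity of $F$ on $[-R,R]$, so $g_{v_n}\to g_v$ and hence $H_{v_n}\to H_v$ in $L^\infty$; the same accumulated-kernel estimate applied to $\varphi_{v_n}-\varphi_v$ shows $\varphi_{v_n}\to\varphi_v$ uniformly, which by linearity gives $\Psi_T(v_n)\to\Psi_T(v)$ in $X_T$. For precompactness (iii), the family $\{\varphi_v\}_{v\in X_{T,R}}$ is uniformly bounded in $L^\infty$, so Lemma \ref{Lemma, equicontinuity} shows that the single-layer potentials built from $\varphi_v$ are uniformly bounded and equicontinuous on $\ol{\O}\times[0,T]$; adding the fixed, continuous contribution $U_0$ and invoking Arzelà--Ascoli yields precompactness of $\Psi_T(X_{T,R})$ in $X_{T,R}$. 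Schauder's theorem then produces a fixed point $u\in X_{T,R}\cap\mathcal{A}_T$, proving part (1).

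For part (2), when $F$ is bounded on $\m{R}$ by some $M$, the quantity $M_R$ in the above self-mapping estimate is replaced by $M$ and is independent of $R$. Fixing an arbitrary $T>0$, one may then choose $R=R(T)$ sufficiently large that the inequality $\|\Psi_T(v)\|_\infty\le R$ holds for every $v\in X_{T,R}$; the continuity and Arzelà--Ascoli arguments in the previous paragraph are unchanged (they only used uniform boundedness of $\{\varphi_v\}$, which is automatic here). The main obstacle throughout is a careful bookkeeping of the $T$-dependence in the constants: one must verify that the factor coming from (\ref{est for accumulated kernel, linear}) combined with the single-layer bound really produces a positive power of $T$ small enough to close the self-mapping estimate in part (1), and that this same estimate extends (with a possibly larger, but finite, constant) to arbitrary $T$ in part (2) once $F$ is uniformly bounded.
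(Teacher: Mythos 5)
Your proposal is correct and follows essentially the same route as the paper: Schauder's fixed point theorem for $\Psi_T$ on $X_{T,R}$, with the self-mapping bound obtained from the accumulated-kernel estimate (\ref{est for accumulated kernel, linear}) producing a positive power of $T$, continuity via uniform control of $F$ on $[-R,R]$, precompactness via Lemma \ref{Lemma, equicontinuity} and Arzel\`a--Ascoli, and uniqueness from Theorem \ref{Thm, nonlinear comparison and uniqueness}. The only cosmetic differences are your choice $R=\|\psi\|_\infty+1$ versus the paper's $R=2C_1$ (both avoid circularity by fixing $R$ before shrinking $T_0$) and your use of uniform continuity of $F$ in place of the paper's mean value theorem argument for requirement (ii).
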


\begin{proof}
Just as the heuristic idea before Lemma \ref{Lemma, equicontinuity}, in order to prove the existence of a solution, we will use Schauder fixed point theorem to show $\Psi_{T}$ has a fixed point in $X_{T,R}$ for some $R>0$. Namely, we need to verify the following three requirements:
\begin{itemize}
\item[(i)] $\Psi_T$ maps $X_{T,R}$ to $\mathcal{A}_{T}\cap X_{T,R}$ for some suitably chosen $R>0$ (depending on $T$);
\item[(ii)] $\Psi_T:X_{T,R}\rightarrow X_{T,R}$ is continuous;
\item[(iii)] $\Psi_T(X_{T,R})$ is precompact in $X_{T,R}$.
\end{itemize}
In the following, we will prove (1) and (2) in Theorem \ref{Thm, exist. for nonlin. simple model} together. Actually, the proofs of requirements (ii) and (iii) for (1) and (2) are identically the same, only the proofs of requirement (i) has slightly difference.

Firstly, given $T>0$, let's recall how we construct $u\triangleq \Psi_T(v)$ for $v\in X_{T,R}$. We will use the same notations as those in the proof of Theorem \ref{Thm, exist. for linear simple model}, but with $\beta=0$ and $g(x,t)=\eta(x)\,F\big(v(x,t)\big)\chifcn_{\Gamma_1}(x)$, where 
\[\chifcn_{\Gamma_1}(x)\triangleq \left\{\begin{array}{cc}
1 & x\in\Gamma_1 \\
0 & x\notin\Gamma_1
\end{array}\right.\]
is the indicator function. Thus $u$ has the following expression: for any $(x,t)\in\ol{\O}\times[0,T]$,
\be\label{sol'n for nonlinear simple model, conjectured form}\begin{split}
u(x,t)\triangleq & \int_{\O_1}\Phi(x-y,t)\,\psi(y)\,dy+\int_{0}^{t}\int_{\O}\Phi(x-y,t-\tau)\,f(y,\tau)\,dy\,d\tau\\
&+\int_{0}^{t}\int_{\p\O}\Phi(x-y,t-\tau)\,\varphi(y,\tau)\,dS_{y}\,d\tau.
\end{split}\ee 
Here $\varphi\in \mathcal{B}_{T}$ satisfies for any $(x,t)\in (\Gamma_1\cup\Gamma_2)\times(0,T]$,
\bes
\varphi(x,t)=\int_{0}^{t}\int_{\p\O}K(x,t;y,\tau)\,\varphi(y,\tau)\,dS_{y}\,d\tau + H(x,t),\ees
where
\be\label{kernel for nonlinear eq.}
K(x,t;y,\tau)=-2\,(D\Phi)(x-y,t-\tau)\cdot\ora{n}(x) \ee
and 
\be\label{remainder for nonlinear eq.} \begin{split}
H(x,t)= &-2\int_{\O_1}(D\Phi)(x-y,t)\cdot\ora{n}(x)\,\psi(y)\,dy\\
&-2\int_{0}^{t}\int_{\O}(D\Phi)(x-y,t-\tau)\cdot\ora{n}(x)\,f(y,\tau)\,dy\,d\tau\\
&+2\,\eta(x)\,F\big(v(x,t)\big)\,\chifcn_{\Gamma_1}(x).
\end{split}\ee
Because the function $K$ in (\ref{kernel for nonlinear eq.}) also satisfies (\ref{est. for the kernel of the induction def for bdry fcn}), we can apply Lemma \ref{Lemma, property of iterative kernels} and follow the same way as the derivations of (\ref{est for accumulated kernel, linear}), (\ref{explicit formula with accumulated kernel, linear})  to get
\be\label{explicit formula for bdry fcn, nonlinear}
\varphi(x,t)=\int_{0}^{t}\int_{\p\O}K^{*}(x,t;y,\tau)\,H(y,\tau)\,dS_{y}\,d\tau + H(x,t) \ee
for some function $K^{*}$. Moreover, there exists a constant $C^{*}=C^{*}(n,\O,T)$ such that
\be\label{bound for accumulate kernel, point. conv., nonlinear}
|K^{*}(x,t;y,\tau)|\leq C^{*}(t-\tau)^{-3/4}\,|x-y|^{-(n-3/2)}. \ee

Next, we will first assume requirement (i) and prove requirements (ii) and (iii), then we will confirm requirement (i) for the Cases (1) and (2) in Theorem \ref{Thm, exist. for nonlin. simple model} respectively. Given $T>0$, we assume there exists $R>0$ such that $\Psi_T:X_{T,R}\rightarrow \mathcal{A}_{T}\cap X_{T,R}$. Define $M_{F}, M_{F'}:[0,\infty)\rightarrow\m{R}$ by 
\[M_{F}(r)=\sup_{|x|\leq r}|F(x)|\]
and 
\[M_{F'}(r)=\sup_{|x|\leq r}|F'(x)|,\]
then both $M_{F}(r)$ and $M_{F'}(r)$ are finite for any $r\geq 0$, since $F\in C^{1}(\m{R})$. In the following, for any $v\in X_{T,R}$, we write $H$, $\varphi$ and $u$ as defined in (\ref{remainder for nonlinear eq.}), (\ref{kernel for nonlinear eq.}) and (\ref{sol'n for nonlinear simple model, conjectured form}) respectively. For any $v_j\in X_{T,R}$ ($j\geq 1$), we analogously write $H_j$, $\varphi_j$ and $u_j$. 

\begin{itemize}
\item Proof of Requirement (ii). Given $\{v_{j}\}_{j\geq 1}\subset X_{T,R}$ and $v_j\rightarrow v$ in $X_{T,R}$, we want to show $\Psi_T(v_j)\rightarrow \Psi_T(v)$ in $X_{T,R}$. Because $v$ and all the $v_j (j\geq 1)$ belong to $X_{T,R}$, then for any $(x,t)\in\ol{\O}\times[0,T]$, $|v(x,t)|\leq R$ and $|v_{j}(x,t)|\leq R$. Thus, by the mean value theorem and the fact $M_{F'}(R)<\infty$, it follows from (\ref{remainder for nonlinear eq.}) that $H_j\rightrightarrows H$ on $(\Gamma_1\cup\Gamma_2)\times(0,T]$ (here $``\rightrightarrows"$ means ``converges uniformly to''). Then by (\ref{explicit formula for bdry fcn, nonlinear}) and (\ref{bound for accumulate kernel, point. conv., nonlinear}), $\varphi_{j}\rightrightarrows\varphi$ on $(\Gamma_1\cup\Gamma_2)\times (0,T]$. Finally, due to the expression (\ref{sol'n for nonlinear simple model, conjectured form}), we have $u_j\rightrightarrows u$ on $\ol{\O}\times[0,T]$, which implies $u\in X_{T,R}$ and $\Psi_{T}(v_j)\rightarrow \Psi_{T}(v)$ in $X_{T,R}$.

\item Proof of Requirement (iii). In this proof, we will use $C$ to denote a constant which is independent of $j$, $x$ and $t$, but may depend on $n$, $\O$, $\O_1$, $T$, $R$, $M_F(R)$, $\sup |f|$, $\sup |\psi|$, $\sup |D\psi|$ and $\sup |\eta|$. $C$ may be different from line to line. Given any sequence $\{v_j\}_{j\geq 1}\subset X_{T,R}$, we want to show $\{\Psi_{T}(v_j)\}_{j\geq 1}$ has a subsequence which converges to some function $u$ in $X_{T,R}$. Since $v_j\in X_{T,R}$ for any $j\geq 1$, then for any $j\geq 1$ and for any $(x,t)\in\ol{\O}\times[0,T]$, $|v_j(x,t)|\leq R$. Recalling (\ref{integration by parts for initial fcn}), we know 
\[\Big|\int_{\O_1}(D\Phi)(x-y,t)\cdot\ora{n}(x)\,\psi(y)\,dy\Big|\]
is bounded by some constant $C$. As a result, based on (\ref{remainder for nonlinear eq.}), these exists another constant $C$ such that for any $j\geq 1$ and for any $(x,t)\in\ol{\O}\times[0,T]$, 
\[|H_j(x,t)|\leq C.\]
Then due to (\ref{explicit formula for bdry fcn, nonlinear}) and (\ref{bound for accumulate kernel, point. conv., nonlinear}), there exists some constant $C$ such that for any $j\geq 1$ and for any $(x,t)\in\ol{\O}\times[0,T]$, \[|\varphi_j(x,t)|\leq C.\]
Now using (\ref{sol'n for nonlinear simple model, conjectured form}) and Lemma \ref{Lemma, equicontinuity}, we find $\{u_j\}_{j\geq 1}$ is uniformly bounded and equicontinuous on $\ol{\O}\times[0,T]$. Hence, it follows from the Arzela-Ascoli theorem that $\{u_j\}_{j\geq 1}$ has a subsequence $\{u_{j_k}\}_{k\geq 1}$ which converges uniformly  to some function $u$ on $\ol{\O}\times[0,T]$. Since $u_{j_k}\in X_{T,R}$, it is readily to see that $u$ is also in $X_{T,R}$. Thus, $\Psi_T(X_{T,R})$ is precompact in $X_{T,R}$.
\end{itemize}

Now we turn to verify Requirement (i). 
\begin{itemize}
\item Proof of Requirement (i) for (1). We will find $0<T_0\leq 1$ such that for any $0<T\leq T_0$, there exists $R>0$ such that $\Psi_T$ maps $X_{T,R}$ to $\mathcal{A}_{T}\cap X_{T,R}$. In this proof, $C$ will denote a constant which is independent of $x$, $t$, $R$ and $T$, but may depend on $n$, $\O$, $\O_1$, $\sup |f|$, $\sup |\psi|$, $\sup |D\psi|$ and $\sup |\eta|$. $C$ may be different from line to line. For the first term of (\ref{remainder for nonlinear eq.}), we recall (\ref{integration by parts for initial fcn}) again to get for any $(x,t)\in (\Gamma_1\cup\Gamma_2)\times(0,T]$,
\begin{align}\label{bound for first term}
&\Big|\int_{\O_1}(D\Phi)(x-y,t)\cdot\ora{n}(x)\,\psi(y)\,dy\Big| \nonumber\\
\leq &\Big|\int_{\p\O_1}\Phi(x-y,t)\,\psi(y)\,\ora{n}(y)\,dy\cdot\ora{n}(x)\Big|+\Big|\int_{\O_1}\Phi(x-y,t)\,D\psi(y)\,dy\cdot\ora{n}(x)\Big| \nonumber\\
\leq & C\,\int_{\p\O_1}|x-y|^{-n}\,dy + C\leq C.
\end{align}
For the second term of (\ref{remainder for nonlinear eq.}), we have for any $(x,t)\in (\Gamma_1\cup\Gamma_2)\times(0,T]$,
\begin{align}\label{bound for second term}
&\Big|\int_{0}^{t}\int_{\O}(D\Phi)(x-y,t-\tau)\cdot\ora{n}(x)\,f(y,\tau)\,dy\,d\tau\Big| \nonumber\\
\leq\, & C\,\sup |f|\,\int_{0}^{t}\int_{\O}(t-\tau)^{-3/4}\,|x-y|^{-(n-1/2)}\,dS_{y}\,d\tau \nonumber\\
\leq\, & C\,t^{1/4}\leq C\,T^{1/4}\leq C\,T_{0}^{1/4}\leq C,
\end{align}
where the last inequality uses the assumption $T_{0}\leq 1$. Then it follows from (\ref{bound for first term}), (\ref{bound for second term}) and (\ref{remainder for nonlinear eq.}) that for any $(x,t)\in (\Gamma_1\cup\Gamma_2)\times(0,T]$,
\be\label{bound for H, nonlinear}
|H(x,t)|\leq C+C\,M_{F}(R).\ee
Although the constant $C^{*}$ in (\ref{bound for accumulate kernel, point. conv., nonlinear}) depends on $T$, if one checks its proof, it is readily to see that $C^{*}$ is an increasing function in $T$. As a result, when $T$ is bounded by $1$, $C^{*}$ will also be bounded by some constant $C$, which only depends on $n$ and $\O$. Based on this observation and (\ref{explicit formula for bdry fcn, nonlinear}), we get for any $(x,t)\in (\Gamma_1\cup\Gamma_2)\times(0,T]$,
\begin{align}\label{bound for varphi, nonlinear, local}
|\varphi(x,t)| &\leq C^{*}\,[C+C\,M_{F}(R)]\int_{0}^{t}\int_{\p\O}(t-\tau)^{-3/4}\,|x-y|^{-(n-3/2)}\,dS_{y}\,d\tau+C+C\,M_{F}(R) \nonumber \\
&\leq [C+C\,M_{F}(R)]\,T^{1/4}+C+C\,M_{F}(R) \nonumber \\
&\leq C+C\,M_{F}(R).
\end{align}
Now by (\ref{sol'n for nonlinear simple model, conjectured form}) and (\ref{bound for varphi, nonlinear, local}), we obtain for any $(x,t)\in\ol{\O}\times[0,T]$,
\begin{align}\label{bound for sol'n, nonlinear}
|u(x,t)| &\leq \sup |\psi|+t\,\sup |f|+ C\,\sup |\varphi|\,\int_{0}^{t}\int_{\p\O}(t-\tau)^{-3/4}\,|x-y|^{-(n-3/2)}\,dS_{y}\,d\tau \nonumber \\
&\leq C+\big[C+C\,M_{F}(R)\big]\,C\,T^{1/4} \nonumber \\
&\leq C+C\,M_{F}(R)\,T_{0}^{1/4} \nonumber \\
&\triangleq C_1+C_1\,M_{F}(R)\,T_{0}^{1/4}.
\end{align}
Hence, if we choose $R$ and $T_{0}\leq 1$ such that 
\be\label{choice of R and T_0}
R=2C_1\quad \text{and} \quad T_{0}^{1/4}\,M_{F}(2C_1)<1,\ee
then we have $||u||\leq R$ and therefore $u\triangleq\Psi_T(v)\in X_{T,R}$.  

\item Proof of Requirement (i) for (2). We will prove that for any $T>0$, there exists $R>0$ such that $\Psi_T$ maps $X_{T,R}$ to $\mathcal{A}_{T}\cap X_{T,R}$. From the assumption, $F$ is a bounded function in $\m{R}$, so $\sup_{\m{R}}|F|<\infty$. In the rest of this proof, we will use $C$ to denote a constant just like that in the proof for (1) but additionally allowing $C$ to depend on $T$ and $\sup_{\m{R}}|F|$. As the same derivations as (\ref{bound for first term}), (\ref{bound for second term}) and (\ref{bound for H, nonlinear}), we attain for any $(x,t)\in (\Gamma_1\cup\Gamma_2)\times(0,T]$,
\begin{align}\label{bound for H, nonlinear, global}
|H(x,t)| &\leq C+C\,M_{F}(R) \nonumber\\
&\leq C+C\sup_{\m{R}}|F|=C.
\end{align}
Then based on (\ref{explicit formula for bdry fcn, nonlinear}) and (\ref{bound for H, nonlinear, global}), we get for any $(x,t)\in (\Gamma_1\cup\Gamma_2)\times(0,T]$,
\begin{align*}
|\varphi(x,t)| &\leq C^{*}\,C\,\int_{0}^{t}\int_{\p\O}(t-\tau)^{-3/4}\,|x-y|^{-(n-3/2)}\,dS_{y}\,d\tau+C \nonumber \\
&\leq C\,T^{1/4}+C \nonumber \\
&= C.
\end{align*}
As a result, by (\ref{sol'n for nonlinear simple model, conjectured form}) and (\ref{bound for varphi, nonlinear, local}) again, we obtain for any $(x,t)\in\ol{\O}\times[0,T]$,
\begin{align*}
|u(x,t)| &\leq \sup |\psi|+t\,\sup |f|+ C\,\sup |\varphi|\,\int_{0}^{t}\int_{\p\O}(t-\tau)^{-3/4}\,|x-y|^{-(n-3/2)}\,dS_{y}\,d\tau \\
&\leq C+C\,T^{1/4} \\
&\triangleq C_2. 
\end{align*}
Thus, as long as choosing $R>C_2$, we will have $||u||\leq R$ and consequently $u\triangleq\Psi_T(v)\in X_{T,R}$. 

\end{itemize}

\end{proof}

As we can see from Theorem \ref{Thm, nonlinear comparison and uniqueness}, the solution to (\ref{Nonlinear Prob, Simple Model}) is proved to be global only under the assumption that the function $F$ being bounded on $\m{R}$. Thus, when $F$ is unbounded, we need to consider the maximal solution and figure out when the solution can exist globally.

\begin{definition}\label{Def, maximal sol'n, Nonlin. Prob.}
We call \[T^{*}\triangleq \sup\{T\geq 0:\,\text{there exsits a solution to (\ref{Nonlinear Prob, Simple Model}) on}\,\,\, \ol{\O}\times[0,T]\}\]
to be the maximal existence time for (\ref{Nonlinear Prob, Simple Model}). Moreover, a function $u^{*}\in C^{2,1}\big(\O\times(0,T^{*})\big)\cap C\big(\ol{\O}\times[0,T^{*})\big)$ is called a maximal solution if it solves (\ref{Nonlinear Prob, Simple Model}) on $\ol{\O}\times[0,T]$ for any $T\in(0,T^{*})$.
\end{definition}

\begin{remark}\label{Remark, maximal soln}
It follows from Theorem \ref{Thm, exist. for nonlin. simple model} and Theorem \ref{Thm, nonlinear comparison and uniqueness} that the maximal solution exists and is unique.
\end{remark}

In (\cite{L-GMW}, Corollary 1.1), it concludes that when $T^{*}$ is finite, then it coincides with the blow-up time of the $L^{\infty}$ norm of $u^{*}$. Actually, it is the same situation here. More precisely, we have the following Theorem \ref{Thm, infinity norm characterizes the blow-up}.

\begin{theorem}\label{Thm, infinity norm characterizes the blow-up}
Let $T^{*}$ be the maximal existence time for (\ref{Nonlinear Prob, Simple Model}) and $u^{*}$ be the maximal solution. If $T^{*}<\infty$, then 
\be\sup_{\ol{\O}\times[0,T^{*})}|u^{*}(x,t)|=\infty.\ee
\end{theorem}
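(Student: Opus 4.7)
The plan is to argue by contradiction via a truncation of the nonlinearity. Suppose, towards a contradiction, that $M\triangleq \sup_{\ol{\O}\times[0,T^{*})}|u^{*}|<\infty$. I would build a bounded $C^{1}$ function $\wt{F}:\m{R}\to\m{R}$ that agrees with $F$ on $[-(M+1),M+1]$ (a standard smooth cutoff construction suffices), and then analyze the auxiliary problem obtained from (\ref{Nonlinear Prob, Simple Model}) by replacing $F$ with $\wt{F}$.

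First, since $\wt{F}$ is bounded on $\m{R}$, part (2) of Theorem \ref{Thm, exist. for nonlin. simple model} yields, for every $T>0$, a unique solution $\wt{u}\in\mathcal{A}_{T}$ to the truncated problem. Hence $\wt{u}\in C^{2,1}\big(\O\times(0,\infty)\big)\cap C\big(\ol{\O}\times[0,\infty)\big)$. Next I would identify $\wt{u}$ with $u^{*}$ on $\ol{\O}\times[0,T^{*})$: for every $T\in(0,T^{*})$, the maximal solution $u^{*}$ lies in $[-M,M]\subset[-(M+1),M+1]$, where $F\equiv\wt{F}$, so $u^{*}$ is itself a solution of the truncated problem on $\ol{\O}\times[0,T]$. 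By the uniqueness asserted in Theorem \ref{Thm, nonlinear comparison and uniqueness}, applied to the truncated problem, $u^{*}=\wt{u}$ on $\ol{\O}\times[0,T]$ for every such $T$, and hence on $\ol{\O}\times[0,T^{*})$.

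Finally, I would push past $T^{*}$. Since $\wt{u}$ is continuous on $\ol{\O}\times[0,\infty)$ and $|\wt{u}(x,t)|=|u^{*}(x,t)|\leq M$ for all $t<T^{*}$, continuity gives $|\wt{u}(x,T^{*})|\leq M$ for every $x\in\ol{\O}$. Then, by uniform continuity on $\ol{\O}\times[0,T^{*}+1]$, there exists $\delta>0$ with $|\wt{u}|\leq M+1$ on $\ol{\O}\times[0,T^{*}+\delta]$. On this range $\wt{F}(\wt{u})=F(\wt{u})$, so $\wt{u}$ actually solves the original problem (\ref{Nonlinear Prob, Simple Model}) on $\ol{\O}\times[0,T^{*}+\delta]$, including the interface condition (\ref{interface bdry deri. for nonlinear model}), which follows from the corresponding condition for the truncated problem together with the equality $\wt{F}(\wt{u})=F(\wt{u})$. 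This contradicts the defining maximality of $T^{*}$ in Definition \ref{Def, maximal sol'n, Nonlin. Prob.}.

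The main delicate point is the identification $u^{*}=\wt{u}$ on $[0,T^{*})$: the uniqueness statement of Theorem \ref{Thm, nonlinear comparison and uniqueness} compares two solutions of the \emph{same} problem, so one must first observe that the bound $|u^{*}|\leq M$ lets us reinterpret $u^{*}$ as a solution of the truncated problem and only then invoke uniqueness for that truncated problem. Everything else reduces to the uniform continuity of $\wt{u}$ and the fact that the modified and original nonlinearities coincide wherever $|\wt{u}|\leq M+1$.
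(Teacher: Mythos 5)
Your truncation-and-continuation argument is correct, and it is precisely the standard argument behind the citation the paper gives in place of a proof (it simply refers to \cite{L-GMW}, Corollary 1.1): bound the solution, replace $F$ by a bounded $C^{1}$ truncation $\wt{F}$, invoke global existence and uniqueness for the truncated problem to identify $\wt{u}$ with $u^{*}$ on $[0,T^{*})$, and then use continuity of $\wt{u}$ past $T^{*}$ to extend $u^{*}$, contradicting maximality. You also correctly flag the one delicate step, namely that uniqueness must be applied to the truncated problem after reinterpreting $u^{*}$ as one of its solutions, so no gaps remain.
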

\begin{proof}
We refer the readers to the arguments in (\cite{L-GMW}, Corollary 1.1).
\end{proof}

Thus, if we can prove the solution to be bounded all the time, then it exists globally. Moreover, in order to estimate $T^{*}$, one only needs to study the blow-up time for the $L^{\infty}$ norm of the solution. 

\begin{remark}\label{Remark, app. to exist. and uniq.}
As a particular application of the theories established in this section, one can apply Theorem \ref{Thm, exist. for nonlin. simple model}, Theorem \ref{Thm, nonlinear comparison and uniqueness}, Theorem \ref{Thm, positivity of the solution} and Remark \ref{Remark, maximal soln} with $f=0$, $\eta=1$, $F(\lambda)=\lambda^{q}$ and $\psi=u_0$ to our targeted problem (\ref{Prob, Simple Model}) to obtain Theorem \ref{Thm, fundamental theorem}.
\end{remark}

\end{appendix}

\section*{Acknowledgements}
This work was partially supported by NSF grant DMS 0968360 and the Dissertation Completion Fellowship from the Michigan State University.

\end{document}